\theoremstyle{plain}
\newtheorem{Teo}{Theorem}[section]
\newtheorem{Lemma}[Teo]{Lemma}
\newtheorem{Prop}[Teo]{Proposition}
\newtheorem{Cor}[Teo]{Corollary}
\theoremstyle{definition}
\newtheorem{Def}[Teo]{Definition}
\newtheorem*{DefTeo}{Theorem/Definition}
\newtheorem*{TeoIntro}{Theorem}
\newtheorem{Rem}[Teo]{Remark}
\newtheorem{Hyp}[Teo]{Hypothesis}
\newtheorem{Ex}[Teo]{Example}
\newtheorem{Notation}[Teo]{Notation}
\numberwithin{equation}{Teo}
\numberwithin{figure}{Teo}
\newcommand{\cP}{\mathcal{P}}
\newcommand{\cC}{\mathcal{C}}  
\newcommand{\cD}{\mathcal{D}}
\newcommand{\cV}{\mathcal{V}}
\newcommand{\NN}{\mathbb N}  
\newcommand{\ZZ}{\mathbb Z}
\newcommand{\QQ}{\mathbb Q}
\newcommand{\inj}{\hookrightarrow}
\newcommand{\Ext}{\operatorname{Ext}}
\newcommand{\Ass}{\operatorname{Ass}}
\newcommand{\Hom}{\operatorname{Hom}}
\newcommand{\Dim}{\operatorname{dim}}
\newcommand{\Ker}{\operatorname{Ker}}
\newcommand{\IM}{\operatorname{Im}}
\newcommand{\Ann}{\operatorname{Ann}}
\newcommand{\InjDim}{\operatorname{inj.dim}}
\newcommand{\Supp}{\operatorname{Supp}}
\newcommand{\Depth}{\operatorname{depth}}
\newcommand{\Rad}{\operatorname{Rad}}
\newcommand{\Spec}{\operatorname{Spec}}
\newcommand{\Length}{\operatorname{length}}
\newcommand{\LengthD}{\operatorname{length}_{D(S,K)}}  
\newcommand{\LengthDshort}{\operatorname{length}_D}
\newcommand{\Char}{\operatorname{char}}
\newcommand{\height}{\operatorname{ht}}
\newcommand{\lcm}{\operatorname{lcm}}
\newcommand{\Codim}{\operatorname{codim}} 
\newcommand{\gr}{\operatorname{gr}}
\newcommand{\Ord}{\operatorname{ord}}
\newcommand{\itLC}[3]{H^{{#1}_{#2}}_{I_{#2}} \cdots H^{{#1}_2}_{I_2} H^{{#1}_1}_{I_1}\left(#3\right)}
\newcommand{\Mult}{e} 
\newcommand{\FDer}[1]{\stackrel{#1}{\longrightarrow}}
\newcommand{\surj}{\twoheadrightarrow}
\newcommand{\deriv}[2]{\frac{\partial^{#2}}{\partial {#1}^{#2}}}
\newcommand{\LC}[3]{H^{#1}_{#2}\left(#3\right)}
\newcommand{\Lyuchar}[1]{\chi_\lambda\left( #1\right)}
\begin{document}  
  
\title{Generalized Lyubeznik numbers}
\author{Luis N\'u\~nez-Betancourt and Emily E.\ Witt}  
\thanks{MSC classes:  13D45, 13N10, 13H99}

\maketitle

\begin{abstract}
Given a local ring containing a field, we define and investigate a family of invariants that includes the Lyubeznik numbers, but that captures finer information. These \emph{generalized Lyubeznik numbers} are defined as lengths of certain iterated local cohomology modules in a category of $D$-modules, and in order to define them,  we develop the theory of a functor Lyubeznik utilized in proving that his original invariants are well defined.   In particular, this functor gives an equivalence of categories with a category of $D$-modules.
These new invariants are indicators of $F$-regularity and $F$-rationality in characteristic $p>0$, and have close connections with characteristic cycle multiplicities in characteristic zero.  We compute the generalized Lyubeznik numbers associated to monomial ideals using interpretations as lengths in a category of straight modules, as well as provide examples of these invariants associated to certain determinantal ideals.


\end{abstract}
\section{Introduction}

The aim of this article is to define and study a family of invariants of a local ring containing a field that includes the Lyubeznik numbers, but that captures finer information.  These invariants are defined in terms of lengths of certain local cohomology modules in a category of $D$-modules.

To prove that these \emph{generalized Lyubeznik numbers} are well defined, we formalize and develop the theory of a functor that Lyubeznik utilized to show that his original invariants are well defined \cite{LyuDmodules}.
In particular, the definition of these new invariants relies heavily on the fact that this functor gives, in fact, a category equivalence with a certain category of $D$-modules.  As a consequence of this new approach, our work also gives a different proof that the original Lyubeznik numbers are well defined. 

Some properties analogous to those of the original invariants hold for the generalized Lyubeznik numbers; however, results on curves and on hypersurfaces show that, unlike the original invariants, the generalized Lyubeznik numbers can differentiate  one-dimensional rings, and complete intersection rings.  

Results of Blickle \cite{Manuel} enable straightforward characterizations of $F$-regularity and $F$-rationality in terms of certain generalized Lyubeznik numbers.
Moreover, recent results of the first author and P\'erez \cite{NuPe} imply that certain generalized Lyubeznik numbers measure how ``far" an $F$-pure hypersurface is from being $F$-regular.
We compute the generalized Lyubeznik numbers associated to monomial ideals
as certain lengths in a category of straight modules, and in characteristic zero, with characteristic cycle multiplicities as well.
The study of the generalized Lyubeznik numbers associated to certain determinantal ideals provides further examples of these new invariants, some striking.  

If $(R,m,K)$ is a local ring admitting a surjection from an $n$-dimensional regular local ring $(S,\eta,K)$ containing a field, and $I$ is the kernel of the surjection, recall that the \emph{Lyubeznik number of $R$ with respect to $i,j \in \NN$}, which depends only on $R$, $i$, and $j$, is defined as $\lambda_{i,j}(R):=\Dim_K \Ext^i_S\left(K,H^{n-j}_I (S)\right)$.  If $(R, m, K)$ is \emph{any} local ring containing a field, we may define $\lambda_{i,j} (R) := \lambda_{i,j} (\widehat{R})$  \cite[Theorem $4.1$]{LyuDmodules}. 
If $d=\dim R$, then $\lambda_{i,j}(R) = 0$ for $j>d$, and $\lambda_{d,d}(R)\neq 0$ \cite[Properties 4.4i, 4.4iii]{LyuDmodules}.  If $R$ is Cohen-Macaulay, $\lambda_{d,d}(R) = 1$ \cite[Theorem 1]{Kawasaki}.  Moreover, the Lyubeznik numbers have extensive geometric and topological interpretations, including connections with \'etale cohomology, and interpretations as the number of connected components of certain punctured spectra.  (See, for example, \cite{B-B, GarciaSabbah, Kawasaki2, Walther2, W}.)

\subsection{Main Results}

A crucial component in proving that the generalized Lyubeznik numbers are well defined is the following equivalence of categories. 

\begin{TeoIntro}[See Theorems \ref{TeoEquiv} and \ref{TeoEquivD}]
Let $R$ be a Noetherian ring, and let $S = R[[x]]$.
Let $\cC$ denote the category of $R$-modules and $\cD$ the category of $D(S,R)$-modules that are supported on $\cV \mathcal(xS)$, the Zariski closed subset of $\Spec (S)$ given by $xS$. Then the functor
\begin{align*}
G: \cC &\to\cD \\
M &\mapsto M\otimes_R S_x/S
\end{align*}
is an equivalence of categories, with inverse functor $\widetilde{G}:\cD\to \cC$ given by $\widetilde{G}(N)=\Ann_N (xS)$. 

Moreover, if $R=K[[y_1,\ldots,y_n]]$, $K$ a field, then $S=K[[y_1,\ldots,y_n,x]],$ and $G$ is an equivalence of categories between the category of
$D(R,K)$-modules and the category of $D(S,K)$-modules supported on $\cV \mathcal(xS).$
\end{TeoIntro}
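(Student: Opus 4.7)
The plan is to verify that $G$ and $\widetilde{G}$ land in the correct target categories, then establish the natural isomorphisms $\widetilde{G}\circ G\cong \mathrm{id}_{\cC}$ and $G\circ\widetilde{G}\cong \mathrm{id}_{\cD}$. Write $S_x/S = \bigoplus_{k\geq 1} R\cdot x^{-k}$ as an $R$-module; the $D(S,R)$-action is given by $x\cdot x^{-k} = x^{-(k-1)}$ (with the convention $x^0 := 0$) and $\partial_x^{[k]}\cdot x^{-j} = (-1)^k\binom{j+k-1}{k}x^{-(j+k)}$, where $\partial_x^{[k]}$ is the $k$-th divided-power operator. Extending by $R$-linearity on the left factor, $G(M) = M\otimes_R(S_x/S)$ becomes a $D(S,R)$-module supported on $\cV(xS)$. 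Symmetrically, $\Ann_N(xS)$ inherits an $R$-action because $R\subseteq S$ commutes with multiplication by $x$.

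The isomorphism $\widetilde{G}\circ G\cong \mathrm{id}_{\cC}$ is immediate: in $M\otimes_R(S_x/S) = \bigoplus_{k\geq 1} M\cdot x^{-k}$, the submodule annihilated by $xS$ is exactly $M\cdot x^{-1}\cong M$. For the other direction, I would define
\[
\phi_N\colon \widetilde{G}(N)\otimes_R(S_x/S)\longrightarrow N,\qquad n\otimes x^{-(k+1)}\longmapsto (-1)^k\partial_x^{[k]}(n).
\]
The commutator relation $[\partial_x^{[k]},x] = \partial_x^{[k-1]}$ in $D(S,R)$, applied to $n\in \Ann_N(xS)$ and iterated, yields
\[
x^j\partial_x^{[k]}n = \begin{cases}(-1)^j\partial_x^{[k-j]}n & \text{if } j\leq k,\\ 0 & \text{if } j>k.\end{cases}
\]
These formulas, together with $\partial_x^{[i]}\partial_x^{[k]} = \binom{i+k}{i}\partial_x^{[i+k]}$, imply directly that $\phi_N$ is a well-defined $D(S,R)$-module homomorphism, natural in $N$.

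Injectivity of $\phi_N$ follows from the same formulas: given a finite sum $\sum_{k=0}^{K}(-1)^k\partial_x^{[k]}n_k = 0$ with $n_k\in \Ann_N(xS)$, apply $x^K$ to annihilate every term except $k=K$, yielding $n_K = 0$; then induct on $K$. For surjectivity, use the support hypothesis to write $N = \bigcup_{j\geq 0}N_j$ where $N_j := \Ann_N(x^{j+1}S)$, and induct on $j$: for $n\in N_j$, set $m := x^j n$, which lies in $\Ann_N(xS)$ since $xm = x^{j+1}n = 0$; the identity $x^j\partial_x^{[j]}(m) = (-1)^j m$ then gives $n - (-1)^j\partial_x^{[j]}(m)\in N_{j-1}$, and the inductive hypothesis places this difference (hence $n$) in $\IM(\phi_N)$.

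For the moreover, when $R = K[[y_1,\ldots,y_n]]$, operators in $D(R,K)$ extend $K[[x]]$-linearly to $S$ and therefore commute with multiplication by $x$; they preserve $\Ann_N(xS)$ and endow $\widetilde{G}(N)$ with a $D(R,K)$-structure. Symmetrically, $G(M)$ acquires a $D(K[[x]],K)$-action from the second factor which, together with the $D(R,K)$-action transported from $M$, generates the full $D(S,K)$-action, and the maps $\phi_N$ are automatically $D(S,K)$-linear. The main obstacle I anticipate is surjectivity of $\phi_N$: it genuinely requires the divided-power operators (unavoidable in positive characteristic, where $\partial_x^k/k!$ need not exist) and an inductive exploitation of the $x$-nilpotent filtration $\{N_j\}$ rather than any direct splitting.
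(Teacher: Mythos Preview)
Your proof is correct and follows essentially the same approach as the paper: the map $\phi_N$ you define is identical to the paper's $\phi$ (the paper writes $(-1)^{\alpha-1}/(\alpha-1)!\,\partial_x^{\alpha-1}$ where you write $(-1)^k\partial_x^{[k]}$), and both arguments verify $D(S,R)$-linearity via the commutator $[\partial_x^{[k]},x]=\partial_x^{[k-1]}$, prove injectivity by multiplying by a suitable power of $x$, and prove surjectivity by induction on the $x$-nilpotent filtration $\Ann_N(x^{j}S)$. Your surjectivity step is slightly more direct (you jump straight to $m=x^jn$ rather than peeling off one power of $x$ at a time as the paper does), but the strategy is the same.
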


To define the generalized Lyubeznik numbers, we depend on the fact that certain local cohomology modules have finite length as 
$D(S,K)$-modules
(cf.\ Section \ref{D-modules}), 
where $K$ is a field and $S=K[[x_1,\ldots,x_n]]$ for some $n$ \cite[Corollary 6]{Lyu2}.
These new invariants depend on the local ring $R$ containing a field, a collection of ideals $I_1,\ldots, I_s$ of $R$, as well as $j_1,\ldots,j_s\in \NN$.   The definition is as follows. 

\begin{DefTeo}[See Theorem \ref{WellDef}, Definition \ref{NewLyuNums}]
Let $(R,m,K)$ be a local ring containing a field, so that the completion $\widehat{R}$ of $R$ at $m$ admits a surjective ring 
map $\pi : S\surj \widehat{R}$, where $S=K[[x_1,\ldots, x_n]]$ for some $n$.
Fix $I_1,\ldots, I_s$ ideals of $R$ and $i_1,\ldots, i_s \in \NN$.
If $J_1,\ldots, J_s$ denote the corresponding preimages of $I_1 \widehat{R},\ldots, I_s \widehat{R}$ 
in $S$, then the \emph{generalized Lyubeznik number of $R$ with respect to $I_1,\ldots, I_s$ and $i_1,\ldots, i_s$} is defined as $$\lambda^{i_s,\ldots, i_1}_{I_s,\ldots, I_1}(R) := \Length_{D(S,K)} H^{i_s}_{J_s}   \cdots H^{i_2}_{J_2}H^{n-i_1}_{J_1}(S).$$  
Moreover, $\lambda^{i_s,\ldots, i_1}_{I_s,\ldots, I_1}(R)$ is finite and depends only on $R$, $I_1,\ldots, I_s$, and $i_1,\ldots, i_s$, but neither on $S$ nor on $\pi$.
\end{DefTeo}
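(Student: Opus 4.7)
The plan is to separately establish finiteness and independence of the choice of $\pi$. Finiteness is immediate from \cite[Corollary 6]{Lyu2}: any iterated local cohomology module of $S=K[[x_1,\ldots,x_n]]$ has finite length as a $D(S,K)$-module. The substance is therefore independence. Given two surjections $\pi\colon S\twoheadrightarrow\widehat{R}$ and $\pi'\colon S'\twoheadrightarrow\widehat{R}$ with $S'=K[[z_1,\ldots,z_{n'}]]$, I would first build a common cover: set $T=K[[x_1,\ldots,x_n,y_1,\ldots,y_{n'}]]$ and let $\rho\colon T\twoheadrightarrow\widehat{R}$ send $x_i\mapsto\pi(x_i)$, $y_j\mapsto\pi'(z_j)$. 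Choosing lifts $t_j\in S$ of $\pi'(z_j)$ and performing the change of variables $y_j':=y_j-t_j$ makes $\rho$ factor through $\pi$ via a surjection $T\twoheadrightarrow S$ sending $y_j'\mapsto 0$; symmetrically, $\rho$ factors through $\pi'$. Iterating one new variable at a time reduces the problem to the single-variable case $S'=S[[y]]$ with $\pi'\colon S[[y]]\to S\to\widehat{R}$ and $y\mapsto 0$, in which the preimages are $J'_k = J_k S[[y]] + yS[[y]]$.

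In this reduced setting I would invoke the equivalence of categories theorem with base ring $S$ in the role of $R$ and new variable $y$ in the role of $x$: the functor $G\colon M\mapsto M\otimes_S S[[y]]_y/S[[y]]$ gives an equivalence between $D(S,K)$-modules and $D(S[[y]],K)$-modules supported on $\cV(yS[[y]])$. Because the generators of $J_1 S[[y]]$ lie in $S$ while $y$ is a genuinely new variable, the \v{C}ech complex on generators of $J'_1$ factors as the tensor product of the \v{C}ech complex on generators of $J_1$ and the complex $[S[[y]]\to S[[y]]_y]$; combined with the vanishing of the $0$th cohomology of the latter, this yields
\begin{equation*}
H^{n+1-i_1}_{J'_1}(S[[y]]) \;\cong\; H^{n-i_1}_{J_1}(S)\otimes_S S[[y]]_y/S[[y]] \;=\; G\bigl(H^{n-i_1}_{J_1}(S)\bigr).
\end{equation*}
For each subsequent step, the module being acted on is already supported on $\cV(yS[[y]])$, so support in $J'_k$ coincides with support in $J_k S[[y]]$, and because $G$ commutes with localization at elements of $S$ and is exact (being an equivalence), one obtains $H^{i_k}_{J'_k}\circ G\cong G\circ H^{i_k}_{J_k}$. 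Iterating over $k=2,\ldots,s$ gives
\begin{equation*}
H^{i_s}_{J'_s}\cdots H^{n+1-i_1}_{J'_1}(S[[y]]) \;\cong\; G\bigl(H^{i_s}_{J_s}\cdots H^{n-i_1}_{J_1}(S)\bigr),
\end{equation*}
and since equivalences of categories preserve length, the two invariants coincide.

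The main obstacle I anticipate is the careful $D$-module-level verification of the commutation $G\circ H^{i_k}_{J_k}\cong H^{i_k}_{J'_k}\circ G$ and of its shifted first-step analog: the abstract isomorphism of modules must be upgraded to an isomorphism of $D(S[[y]],K)$-modules, which requires identifying the $D$-action on the \v{C}ech cohomology of $S[[y]]$ at the joint ideal $J'_k$ with the $D$-action transported through $G$. Using the explicit inverse $\widetilde{G}(N)=\Ann_N(yS[[y]])$ from the equivalence theorem should close this gap, since one can check the identification on $\widetilde{G}$ (where the $D(S,K)$-action is transparent) and then lift back through the equivalence.
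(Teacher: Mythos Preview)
Your proposal is correct and follows essentially the same route as the paper: finiteness via \cite[Corollary~6]{Lyu2}, and independence by passing through a common cover $T$, reducing by a change of variables to the one-variable extension $S[[y]]\twoheadrightarrow S$, and then invoking the functor $G$ together with its length-preservation (the paper packages your commutation identity $G\bigl(H^{i_s}_{J_s}\cdots H^{n-i_1}_{J_1}(S)\bigr)\cong H^{i_s}_{J'_s}\cdots H^{n+1-i_1}_{J'_1}(S[[y]])$ as Proposition~\ref{PropIdeals}, proved by exactly your \v{C}ech-factorization for the first step and the support-on-$\cV(y)$ argument for subsequent steps, and the length equality as Theorem~\ref{TeoEquivD}). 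The $D$-module compatibility you flag as an obstacle is precisely what the paper's development of $G$ in Section~\ref{KeyFunctor} is designed to secure, so your proposed resolution via $\widetilde{G}$ is on target.
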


We show that the family of generalized Lyubeznik numbers does, in fact, contain the original Lyubeznik numbers (see Proposition \ref{PropRmk}).  As a consequence of this proof, we give a new proof of the fact that the Lyubeznik numbers are well defined.

We prove generalizations of some vanishing results for the original invariants (see Proposition \ref{FirstProp}).  We also investigate the behavior of the generalized Lyubeznik numbers under finite field extensions, as well as derive an inequality of the generalized Lyubeznik numbers with characteristic cycle multiplicity in characteristic zero (see Propositions \ref{FieldExtension} and \ref{CCineq}).

Unlike the original Lyubeznik numbers, results on curves and on hypersurfaces show that the new invariants can differ for one-dimensional and complete intersection rings (see Propositions \ref{PropCurves} and \ref{PropHyps}), confirming that the generalized Lyubeznik numbers capture finer information than do the original ones.  We also define a new invariant, the \emph{Lyubeznik characteristic}, in terms of certain generalized Lyubeznik numbers (see Definition \ref{LyuCharc}).

We investigate further properties of the generalized Lyubeznik numbers in several of cases.  We point out characterizations of $F$-regularity and $F$-rationality in terms of these invariants that follow from work of Blickle (see Proposition \ref{F-regularity} and Corollary \ref{F-regularityGor}) \cite{Manuel}.  We use this, and recent results of the first author and P\'erez to point out that certain generalized Lyubeznik numbers measure how ``far" an $F$-pure hypersurface is from being $F$-regular (see Remark \ref{chains}).

We also compute certain generalized Lyubeznik numbers corresponding to ideals of maximal minors (see Section \ref{MaxMinors}).  In particular, these give a striking illustration of the generalized Lyubeznik numbers' strong characteristic dependence (see Remark \ref{CharDep}).   

Finally, we study the generalized Lyubeznik numbers corresponding to monomial ideals.  In particular, it is possible to compute these invariants as certain lengths in the category of straight modules, and in terms of characteristic cycle multiplicities in characteristic zero (see Theorem \ref{sqfreeExt}).  Using work of \`Alvarez-Montaner, we bound certain generalized Lyubeznik numbers in terms of the minimal primes of the corresponding monomial ideal \cite{Montaner}.  We also compute the Lyubeznik characteristic of Stanley-Reisner rings in terms its faces of
(see Theorem \ref{LyuCharSR});
Interestingly, this invariant is characteristic independent in this case, even though the original Lyubeznik numbers are not (see Remark \ref{LyuCharChar}).

\subsection{Outline}  
Section \ref{Prelim} gives relevant background on $D$-modules (\ref{D-modules}) and on positive characteristic methods (\ref{PosChar}).  In Section \ref{KeyFunctor}, we develop the theory of a functor that Lyubzenik used to show that the original Lyubeznik numbers are well defined \cite{LyuDmodules}.  Theorem \ref{TeoEquiv}, and Theorem \ref{TeoEquivD} in Section \ref{DefFirstProp}, show that this functor gives an equivalence of categories with a category of $D$-modules.  In Section \ref{DefFirstProp}, the results on this functor are critically used to define the generalized Lyubeznik numbers (see Theorem \ref{WellDef} and Definition \ref{NewLyuNums}); Proposition \ref{PropRmk}  shows that these invariants include the original Lyubeznik numbers.  
Here, we also give some properties of  the generalized Lyubeznik numbers and define the \emph{Lyubeznik characteristic}.
Section \ref{Frelations} states interpretations of $F$-rationality and $F$-regularity in terms of certain generalized Lyubeznik numbers through results of Blickle \cite{Manuel}.  In Section \ref{MaxMinors}, we give examples of generalized Lyubeznik numbers corresponding to the maximal minors of a generic matrix.  Finally, in Section \ref{Monomial}, we compute these invariants associated to monomial ideals using the theory of straight modules, and in terms of characteristic cycle multiplicities in characteristic zero.  We also compute the Lyubeznik characteristic of  Stanley-Reisner rings.

\section{Preliminaries} \label{Prelim}

\subsection{$D$-modules} \label{D-modules}

Given rings $A\subseteq S$,  we define the ring of $A$-linear differential operators of $S$, $D(S,A)$, as 
the subring of $\Hom_A(S,S)$ defined inductively as follows:
the differential operators of order zero are induced by multiplication by elements in $S$.  An element $\theta \in \Hom_A(S,S)$ is a differential operator of order less than or equal to $k+1$ if, for every $r \in S$, $[\theta,r]:=\theta\cdot r -r \cdot\theta$
is a differential operator of order less than or equal to $k$. 
From the definition, we have that if $B$ is a subring $A,$ we have that $D(S,A)\subseteq D(S,B).$

If $M$ is a $D(S,A)$-module, then $M_f$ has the structure of a $D(S,A)$-module such that, for every $f \in S$, the natural
morphism $M\to M_f$ is a morphism of $D(S,A)$-modules. 
As a result, since $S$ is a $D(S,A)$-module, for 
all ideals $I_1,\ldots,I_s\subseteq S$,
and all $i_1,\ldots i_s \in \NN$,
$\itLC{i}{\ell}{S}$ is also a $D(S,A)$-module \cite[Example 2.1(iv)]{LyuDmodules}.

By \cite[Theorem $16.12.1$]{EGA},
if $S=A[[x_1,\ldots,x_n]]$, 
then $$D(S,A)=S\left\langle\frac{1}{t!} \deriv{x_i}{t} \ | \ t \in \NN, 1 \leq i \leq n \right\rangle\subseteq \Hom_A(S,S).$$ 
Moreover, if $A=K$ is a field, then $S_f$ has finite length in the category of
$D(S,K)$-modules for every $f \in S$. Consequently, every module of the form $\itLC{i}{s}{S}$ also has finite
length in this category \cite[Corollary 6]{Lyu2}.

\begin{Hyp} \label{hyp}
Throughout the rest of Section \ref{D-modules}, we will assume that $S$ is either or  $K[x_1,\ldots,x_n]$ or $K[[x_1,\ldots,x_n]]$, where $K$ is a field of characteristic $0$. Let $D = D(S,K)$.
\end{Hyp}

We recall some relevant definitions and properties of $D$-modules, and refer the reader to \cite{Bj1,Bj2,Coutinho,MeNa} for details.
Under Hypothesis \ref{hyp}, we know that $D=S\left\langle\frac{\partial}{\partial x_1},\ldots,\frac{\partial}{\partial x_n}\right\rangle\subseteq \Hom_K(S,S)$, and there is an ascending filtration 
$$
\Gamma_i:=\{\delta\in D \mid  \Ord(\delta)\geq i\}
=\bigoplus_{\alpha_1+\ldots +\alpha_n\leq i} R\cdot\frac{\partial^\alpha}{\partial x_i^\alpha}.
$$
Moreover,  $\gr^\Gamma(D)\cong S[y_1,\ldots,y_n]$, a polynomial ring over $S$.
A filtration $\Omega=\{\Omega_j\}$ of $S$-modules on a $D$-module $M$ is a \emph{good filtration}
if  $\Omega_j\subseteq \Omega_{j+1}$, $\bigcup \limits_{j\in\NN} \Omega_{j}=M$, $\Gamma_i \Omega_j\subseteq \Omega_{i+j}$, and $\gr^{\Omega}(M)=\bigoplus \limits_{j\in \NN} \Omega_{j+1} / \Omega_i$ is a finitely generated $\gr^{\Gamma}\left(D(S,K)\right)$-module.


If $\Gamma$ is a good filtration,
neither $\dim_{\gr^\Gamma(D)} \gr^{\Omega}(M) $
nor $\Rad(\Ann_{\gr^{\Gamma}(D)}\gr^{\Omega}(M))$ depend on the choice of good filtration. For the sake of clarity, 
we will omit the filtration when referring to the associated graded ring or module. 

A finitely generated $D$-module $M$ is \emph{holonomic} if either
$M=0$ or
$\dim_{\gr(D)} \gr(M)=n$. The holonomic $D$-modules form a full abelian subcategory of the category
of $D$-modules, and every holonomic $D$-module has finite length as a $D$-module. Moreover, if $M$ is holonomic, then $M_f$ is also holonomic for every $f\in S$.
As a consequence, since $S$ is holonomic, every module of the form $\itLC{i}{\ell}{S}$ is also.

\begin{Def}[Characteristic variety, characteristic cycle, characteristic cycle multiplicity] \label{CCdef}
Given a holonomic $D$-module, the \emph{characteristic variety of $M$} is
$$
C(M)=\cV\left(\Rad\left(\Ann_{\gr \left(D(S,K)\right)}\gr(M)\right)\right)\subseteq \Spec\gr(D),
$$
and its \emph{characteristic cycle} is
$
CC(M)=\sum m_i V_i,
$
where the sum is taken over all the irreducible components $V_i$ of $C(M)$, and $m_i$
is the corresponding multiplicity. We define the \emph{\textup{(}characteristic cycle\textup{)} multiplicity of $M$} by
$
e(M)= \sum m_i.
$ 
\end{Def}
\begin{Rem}\label{PropMultiplicity}
 If $0\to M'\to M\to M''\to 0$ is an exact sequence of holonomic $D$-modules, then
$CC(M)=CC(M') + CC(M'')$; as a consequence,
$e(M)=e(M')+e(M'')$. In addition, $CC(M)=0$ if and only if $M =0$, so that $e(M)=0$ if and only if $M=0$ as well. 
\end{Rem}

Now let $S=K[x_1,\ldots,x_n]$, and take $f\in S$.  Let $N[s]$ be the  free $S_f[s]$-module generated by a symbol ${\bf f^s}$. We give $N[s]$ a
left $D_f[s]$-module structure as follows:
$$
\frac{\partial}{\partial x_i} \cdot \frac{g}{f^\ell}{\bf f^s}=
\left(\frac{1}{f^\ell}\frac{\partial g}{\partial x_i} -s \frac{g}{f}\frac{\partial f}{\partial x_i}  \right){\bf f^{-s}}.
$$
There exist a polynomial $0 \neq b(s)\in \QQ[s]$ and an operator $\delta(s)\in D[s]$ that satisfy
\begin{equation}\label{BS}
\delta(s)f\cdot(1\otimes {\bf f^s})=b(s)(1\otimes {\bf f^s})
\end{equation} 
in $N[s]$ \cite[Chapter $10$]{Coutinho}.

Given $\ell \in \ZZ$, we define the specialization map $\phi_\ell : N[s]\to R_f$ by 
$\phi_\ell (v s^i\otimes {\bf f^s})=\ell^i  v {\bf f^{\ell}}.$ Thus, $\phi_\ell (\delta(s)v)= \delta(\ell)\phi_\ell (v)$.
Then, by applying this morphism to the result, we have
$$
\delta(\ell)  f^{\ell+1} =b(\ell) f^\ell. 
$$

The set of all polynomials $h(s)\in\QQ[s]$ that satisfy Equation \ref{BS} forms an ideal of $\QQ[s].$ 
We call the minimal monic polynomial satisfying it the
\emph{Bernstein-Sato polynomial of $f$}, and denote it $b_f(s).$ 

\subsection{Methods in positive characteristic} \label{PosChar}

We briefly recall several methods used in the study of rings of positive characteristic.  Our summary is based on 
\cite{Fedder} for $F$-purity and $F$-injectivity, 
\cite{P-S} for the Frobenius functor, 
\cite{Manuel-DRF-Modules} for $R[F^e]$-modules, 
\cite{Ye} for $D$-modules in positive characteristic, and  \cite{HoHu1, HoHu2,Smith-PI} for tight closure. We refer the reader to these articles for details.

Throughout this section, $R$ is a ring of characteristic $p>0$ and 
$F:R\to R$ denotes the Frobenius morphism, $r\mapsto  r^p.$ 
If $R$ is reduced, we define $R^{1/q}$ as the ring of formal $q^\text{th}$-roots of $S$.
A ring $R$ is \emph{$F$-finite} if $R^{1/p}$ is a finitely generated $R$-module.

We say that $R$ is \emph{$F$-pure} if for every $R$-module $M$, 
the morphism induced by the inclusion of $R \inj R^{1/p}$, $M\otimes_R R\to M\otimes_R R^{1/p}$,
is injective. 
If $M$ is an $R$-module, then $F$ acts naturally on it. If $(R,m,K)$ is local, we say that a ring is  $F$-injective if the induced Frobenius map
$F: H^i_m (R) \to H^i_m (R)$ is injective for every $i\in\NN$. $F$-purity implies $F$-injectivity, and  in a Gorenstein ring,
these properties are equivalent \cite[Lemma 3.3]{Fedder}.

An \emph{$R[F^e]$-module} is an $R$-module $M$ with an $R$-linear map $\nu^e : F^{e*} M\to M.$  If $\nu^e$ is an isomorphism, then $(M, \nu^e)$ is called a \emph{unit $R[F^e]$–-module.} By adjointness there is a one-to-one correspondence between maps $\nu^e_M \in Hom(F^{e*}M,M)$ and   maps $F^e_M\in  \Hom(M, F^{e*}M),$ where $F^e_M (u) = \nu^e_M (1 \otimes u)$. An element $u \in M$ of an $R[F^e]$-module $(M, \nu^e)$ is called \emph{$F$-nilpotent} if $F^{e\ell }(u) = 0$ for some $\ell\in\NN;$  $M$ is called \emph{$F$–-nilpotent} if $F^{e\ell} (M) = 0.$

If $R$ is a reduced $F$-finite ring, then
$D(R,\ZZ)=\bigcup_{e\in\NN} \Hom_{R^{p^e}}(R,R).$
Moreover, if $K$ is a perfect field and $R=K[[x_1,\ldots,x_n]]$, then $D(R,\ZZ)=D(R,K)$.

If $I$ is an ideal of $R$, the \emph{tight closure} $I^*$ of $I$ is the ideal of $R$
consisting of all those elements $z\in R$ for which there exists some $c \in R$, $c$
not in any minimal prime of $R$, such that
$cz^q \in I^{[q]} \hbox{ for all }q = p^e \gg 0$, where $I^{[q]}$ denotes the ideal of $R$ generated by $q^\text{th}$ powers of elements in $I$.

We say that $R$ is \emph{weakly $F$-regular} if $I=I^*$ for every ideal $I$ of $R$.
If every localization of $R$ is weakly $F$-regular, then $R$ is \emph{$F$-regular}.
In general, tight closure does not commute with localization, and it is unknown whether the localization of 
a weakly $F$-regular ring must again be weakly $F$-regular; this explains the use of the adjective ``weakly."
If $R$ is a local ring, we say that the ring is $F$-rational if for every parameter ideal $I$, $I=I^*.$

A ring $R$ is \emph{strongly $F$-regular} if for all $c\in R$ not in any minimal prime,
there exists some $q=p^e$ such that the $R$-module map $R \to R^{1/q}$ sending $1\mapsto c^{1/q}$ splits. Strong $F$-regularity is preserved under localization.
In a Gorenstein ring, $F$-rationality, strong $F$-regularity, and
weak $F$-regularity are equivalent. 



Given a Noetherian ring $R$ of prime characteristic $p>0$, if $N \subseteq M$ is an inclusion of $R$-modules, then the \emph{tight
closure  $ N^*_M$ of $N$ in $M$} consists of all elements $u \in M$, such that for some $c$ not in any minimal prime of $R,$
$ cu^q \in N^{[p^e]}_M := \IM\left( F^e(N) \to F^e(M)\right) \subseteq F^{e}(M) \textrm{ for all  } p^e\gg 0.$
\section{A Key Functor} \label{KeyFunctor}
In this section, we study a functor utilized by Lyubeznik to prove that his original invariants are well defined (cf.\ \cite[Lemma $4.3$]{LyuDmodules}). 
In order to prove that the generalized Lyubeznik numbers are well defined, significant development of the theory of this functor is necessary.  The fact that this functor gives, in fact, an equivalence with a certain category of $D$-modules is essential to the results here, as we will see in Theorem \ref{TeoEquiv}.


\begin{Def}[Key functor $G$] \label{Gfunctor}
Let $R$ be a Noetherian ring, and let $S = R[[x]]$.  
Let $G:R\operatorname{-mod}\to S\operatorname{-mod}$ be the functor given by $G(-)= ( - ) \otimes_R S_x/S$. 
\end{Def}

\noindent We note that the functor $G$ is reminiscent of the ``direct image" functor utilized by \`Alvarez Montaner, by differs due to the base ring in the tensor product \cite{AM-Proc}.

\begin{Rem}
For every element in $u\in G(M)$ there exist  $\ell, \alpha_1,\ldots, \alpha_\ell\in \NN$, $m_1,\ldots,m_\ell\in M$, uniquely determined, such that $u=m_\ell \otimes x^{-\alpha_\ell}+\ldots + m_1 \otimes x^{-\alpha_1}$ and $m_\ell\neq 0$ because \begin{equation} \label{directsumdecomp} G(M)=M \otimes_R S_x/S = M \otimes_R \left(\bigoplus \limits_{\alpha \in \NN} R x^{-\alpha}\right) = \bigoplus \limits_{\alpha \in \NN} \left(M \otimes R x^{-\alpha}\right).  \end{equation} Moreover, $G$ is an exact functor and commutes with local cohomology.

\end{Rem}

\begin{Rem}
In fact, $G$ is a functor from the category $R$-modules to the 
category of $D(S,R)$-modules:
Let $M$ be a $D(S,R)$-module. 
Since  
$D(S,R)=S \langle \frac{1}{t!} \deriv{x}{t} \ | \ t \in \NN \rangle \subseteq \Hom_K(S,S)$, it is enough to give an action of each
$\frac{1}{t!} \deriv{x}{t}$ on $G(M)$.
If $m\otimes x^{-\alpha} \in G(M)$, we define
\begin{equation*} \label{daction} \left(\frac{1}{t!}  \deriv{x}{t} \right)\cdot (m\otimes x^{-\alpha})=  \binom{\alpha+t-1}{t} \cdot \left((-1)^{t} m\otimes
x^{-\alpha-t}\right). \end{equation*} 
In particular, taking $\alpha = $1 and $t = \beta$, we see that,  for every $\beta \in \NN$, \begin{equation}  \label{daction2}   m\otimes x^{-\beta}
=\frac{(-1)^{\beta-1}}{(\beta-1)!}
 \deriv{x}{\beta-1}
(m \otimes x^{-1}). \end{equation}

Similarly, for every morphism of $R$-modules $\varphi$,
$G(\varphi)=\varphi \otimes_R  S_x/S$ is a morphism of $D(S,R)$-modules.
\end{Rem}

Moreover, $G$ is an equivalence of certain categories:

\begin{Teo}\label{TeoEquiv}
Let $R$ be a Noetherian ring, and let $S = R[[x]]$.
Let $\cC$ denote the category of $R$-modules and $\cD$ denote the category of $D(S,R)$-modules that are supported on $\cV \mathcal(xS)$, the Zariski closed subset of $\Spec (S)$ given by $xS$. Then
$G:\cC\to\cD$ as in Definition \ref{Gfunctor} is an equivalence of categories with inverse functor $\widetilde{G}:\cD\to \cC$ given by $\widetilde{G}(M)=\Ann_M (xS)$.
\end{Teo}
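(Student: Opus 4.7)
My plan is to exhibit explicit natural isomorphisms $\eta_M : M \to \widetilde{G}(G(M))$ and $\epsilon_N : G(\widetilde{G}(N)) \to N$ for $M \in \cC$ and $N \in \cD$, thereby establishing that $\widetilde{G}$ is quasi-inverse to $G$. First I check that the two functors land in the claimed targets: every element of $S_x/S$ is annihilated by a power of $x$, so $G(M)$ is supported on $\cV(xS)$; and $xS$ annihilates $\widetilde{G}(N) = \Ann_N(xS)$ by definition, so this subgroup carries a well-defined $S/xS \cong R$-module structure. Both assignments are clearly functorial on morphisms.

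The unit $\eta_M$ is immediate from the direct sum decomposition (\ref{directsumdecomp}): $\Ann_{G(M)}(xS) = M \otimes_R Rx^{-1}$, which is naturally isomorphic to $M$ via $m \otimes x^{-1} \mapsto m$. For the counit, formula (\ref{daction2}) forces the definition
$$
\epsilon_N(m \otimes x^{-\alpha}) = (-1)^{\alpha-1}\cdot \tfrac{1}{(\alpha-1)!}\deriv{x}{\alpha-1}(m), \qquad m \in \widetilde{G}(N),
$$
which is well-defined and $R$-linear by the direct sum decomposition. I verify $D(S,R)$-linearity on the generators $x$ and the divided powers $\tfrac{1}{t!}\deriv{x}{t}$: the commutation identity $[x, \tfrac{1}{t!}\deriv{x}{t}] = -\tfrac{1}{(t-1)!}\deriv{x}{t-1}$, combined with the vanishing $xm=0$ for $m \in \widetilde{G}(N)$, reduces both sides of each verification to the same expression coming from (\ref{daction}).

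The main obstacle is proving $\epsilon_N$ is bijective. I filter both sides by $x$-power torsion: set $N_k = \Ann_N(x^k)$ and $G(\widetilde{G}(N))_k = \bigoplus_{\alpha=1}^{k} \widetilde{G}(N) \otimes_R Rx^{-\alpha}$. Since every element of $N$ is killed by some power of $x$, $N = \bigcup_k N_k$, so it suffices to prove by induction on $k$ that $\epsilon_N$ restricts to an isomorphism $G(\widetilde{G}(N))_k \to N_k$; the base $k=1$ is immediate, and the inductive step follows from the five lemma applied to the short exact sequences coming from the filtrations, once one checks that the induced map
$$
\bar\epsilon_N : \widetilde{G}(N) \to N_k/N_{k-1}, \qquad m \mapsto (-1)^{k-1}\cdot \tfrac{1}{(k-1)!}\deriv{x}{k-1}(m) + N_{k-1},
$$
is an isomorphism. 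Iterating the commutation relation gives the key identity $x^{k-1}\cdot \tfrac{1}{(k-1)!}\deriv{x}{k-1}(m) = (-1)^{k-1}m$ for every $m \in \widetilde{G}(N)$. This identity directly yields injectivity of $\bar\epsilon_N$ (applying $x^{k-1}$ to a lift of a class in the kernel forces $m=0$); and for surjectivity, given $n \in N_k$, setting $m := x^{k-1}n \in \widetilde{G}(N)$ and applying the identity to this particular $m$ shows that $\bar\epsilon_N(m) = [n]$ in $N_k/N_{k-1}$.
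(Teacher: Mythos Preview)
Your proof is correct and follows essentially the same route as the paper: the same counit map $\epsilon_N = \phi$, the same verification of $D(S,R)$-linearity via the generators $x$ and the divided powers, and the same idea of climbing the $x$-power torsion filtration to prove bijectivity. The only cosmetic difference is in how the bijectivity step is packaged: the paper proves injectivity by a direct reduction (multiplying a putative kernel element by a power of $x$) and surjectivity by an explicit induction on $j$ showing $\phi(\Ann_{G(M)}(x^jS)) = \Ann_N(x^jS)$, whereas you invoke the five lemma on the associated graded and use the identity $x^{k-1}\cdot \tfrac{1}{(k-1)!}\deriv{x}{k-1}(m) = (-1)^{k-1} m$ for $m \in \Ann_N(x)$ to handle both directions at once.
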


\begin{proof}
It is clear that for every $R$-module $M$, $\widetilde{G}(G(M))$ is naturally isomorphic to $M$.
It suffices to prove that for every $D(S,R)$-module $N$ with support on $\cV (xS)$, $G(\widetilde{G}(N))$ is naturally isomorphic to $N$.  
Let $M=\widetilde{G}(N)=\Ann_N (xS)$, and
let $\phi:G(M)\to N$ be the morphism of $R$-modules defined on simple tensors by $m\otimes x^{-\alpha}\mapsto \frac{(-1)^{\alpha-1}}{(\alpha-1) !}  \deriv{x}{\alpha-1} m$. 
We will prove, in steps, that $\phi$ is an isomorphism of $D(S,R)$-modules.

First, we will show that $\phi$ is a morphism of $D(S,R)$-modules. 
Since $D(S,R)=S\langle\frac{1}{t!}  \deriv{x}{t} \ | \ t \in \NN \rangle$, it is enough to show that $\phi$ commutes with multiplication by $x$ and by any 
$\frac{1}{t!} \deriv{x}{t}$.

We first prove commutativity with $\frac{1}{t!} \deriv{x}{t}$.  For any $t \in \NN$,
\begin{align*} 
\phi\left(\frac{1}{t!}\deriv{x}{t}  (m\otimes x^{-\alpha})\right)&=
\phi\left(\binom{\alpha+t-1}{t}  \left( (-1)^t m\otimes x^{-\alpha-t} \right) \right)\\
&=
\binom{\alpha+t-1}{t}  \frac{(-1)^{\alpha - 1}}{(\alpha+t-1)!}\deriv{x}{\alpha+t-1} m\\
&=
\frac{1}{t!} 
\frac{(-1)^{\alpha-1}}{(\alpha-1)!}
\deriv{x}{\alpha+t-1}
 m\\
&= \frac{1}{t!}
\deriv{x}{t} 
\left( \frac{(-1)^{\alpha-1}}{(\alpha-1)!}
\deriv{x}{\alpha-1}
m \right) \\
&=\frac{1}{t!} \deriv{x}{t} \phi( m\otimes x^{-\alpha}),
\end{align*} 
which is sufficient.

We now prove that the morphism commutes with $x$. 
Note that
$$
x\frac{1}{t !}\deriv{x}{t} - \frac{1}{t !}\deriv{x}{t}x
=-\frac{1}{(t-1) !}\deriv{x}{t-1}
$$
as differential operators for every $t\in\NN$. We conclude that
\begin{align}
\phi(x (m\otimes x^{-\alpha})) =\phi(m\otimes x^{-\alpha+1}) &= \phi\left(m\otimes  \frac{(-1)^{\alpha-2}}{(\alpha-2) !}\deriv{x}{\alpha-2} x^{-1}\right) \notag \\
&=\frac{(-1)^{\alpha-2}}{(\alpha-2) !}\deriv{x}{\alpha-2} \phi(m\otimes x^{-1})  \label{three} \\
&=x\frac{(-1)^{\alpha-1}}{(\alpha-1) !}\deriv{x}{\alpha-1}\phi(m\otimes x^{-1}) - 
\frac{(-1)^{\alpha-1}}{(\alpha -1) !}\deriv{x}{\alpha-1} x \phi(m\otimes x^{-1})  \notag \\
&=x\frac{(-1)^{\alpha-1}}{(\alpha-1) !}\deriv{x}{\alpha-1}\phi (m\otimes x^{-1})  \notag \\
&=x\phi (m\otimes  \frac{(-1)^{\alpha-1}}{(\alpha-1) !}\deriv{x}{\alpha-1} x^{-1})  \label{six} \\
&=x\phi( m\otimes x^{-\alpha}),  \notag
\end{align}
where \eqref{three} and \eqref{six} are due to the commutativity of $\frac{1}{t!}\deriv{x}{t}$.  

It remains to prove that $\phi$ is bijective; we proceed by contradiction. Suppose that there exists $u=m_\ell \otimes 
x^{-\alpha_\ell}+\ldots + m_1 \otimes x^{-\alpha_1}\in \Ker (\phi)$  such that $m_\ell\neq 0$. 
Then $\phi(m_\ell\otimes x^{-1})=\phi(x^{\ell-1}u)=x^{\ell-1} \phi(u)=0$. Thus, $m_\ell=0$ because
$\phi|_{M\otimes R x^{-1}}$ is bijective, and we get a contradiction.

We now see that $\phi(\Ann_{G(M)} (x^jS))=\Ann_{N} (x^jS)$ for every $j\geq 1$ by induction, which will imply that $\phi$ is 
surjective (since $N$ is supported on $\cV(xS)$).  Since $\phi(\Ann_{G(M)} (x^jS)) \subseteq \Ann_{N} (x^jS)$ for all $j$, we seekthe opposite inclusion.  
For $j=1$, take $n \in M=\Ann_N (xS)$; then $n \otimes x^{-1} \in G(M)$, so $\phi(n \otimes x^{-1}) = n$.
Now take any $j \geq 1$ and assume the statement holds for $j-1$. For any $u\in \Ann_{N} (x^jS)$,
 $xu\in \Ann_{N} (x^{j-1}S)$, so $xu=\phi(v)$ for some $v=m_{j-1}\otimes x^{-j+1}+\ldots + m_1 \otimes x^{-1}\in G(M)$
 by the inductive hypothesis. Let $w=m_{j-1}x^{-j}+\ldots + m_1 \otimes x^{-2}$.
Thus, $x\phi(w)=\phi(x w)=\phi(v)=xu$. This means that $x (\phi(w)-u)$ = 0, and so $\phi(w)-u\in \Ann_N (xS)=\phi(\Ann_{G(M)} (xS))$ and 
$\phi( m'\otimes x^{-1})=\phi(w)-u$ for some $m'\in M$ by the base case. 
Therefore, $u=\phi(w-m\otimes x^{-1})\in \phi(\Ann_{G(M)}(x^{j}S))$. 
\end{proof}
\begin{Prop}\label{PropFG}
Let $R$ be a Noetherian ring, and let $S = R[[x]]$. 
Then $M$ is a finitely generated $R$-module if and only if $G(M)$ is a finitely generated $D(S,R)$-module.

\end{Prop}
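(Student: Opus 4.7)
The plan is to prove both implications by exploiting the canonical direct sum decomposition \eqref{directsumdecomp} together with the differential operator action \eqref{daction2}, and to close the backward direction using the equivalence of categories established in Theorem \ref{TeoEquiv}.

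For the forward direction, suppose $M$ is generated over $R$ by $m_1,\ldots,m_k$. I would claim that $G(M)$ is generated as a $D(S,R)$-module by the $k$ elements $m_1\otimes x^{-1},\ldots,m_k\otimes x^{-1}$. To see this, take any $u\in G(M)$ and use \eqref{directsumdecomp} to write $u=\sum_{j} n_j\otimes x^{-\alpha_j}$ with $n_j\in M$. Writing each $n_j = \sum_i r_{i,j}m_i$ with $r_{i,j}\in R\subseteq S\subseteq D(S,R)$ and invoking \eqref{daction2}, each summand $n_j\otimes x^{-\alpha_j}$ is, up to a unit scalar in $\QQ$, equal to $\deriv{x}{\alpha_j-1}(n_j\otimes x^{-1})$, which lies in the $D(S,R)$-span of the $m_i\otimes x^{-1}$. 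Hence $G(M)$ is finitely generated over $D(S,R)$.

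For the backward direction, suppose $G(M)$ is generated as a $D(S,R)$-module by $u_1,\ldots,u_k$. Using \eqref{directsumdecomp}, write each $u_i=\sum_{j} m_{i,j}\otimes x^{-\alpha_{i,j}}$ as a finite sum with $m_{i,j}\in M$, and let $N\subseteq M$ be the $R$-submodule generated by the finitely many elements $m_{i,j}$. Since $S_x/S \cong \bigoplus_{\alpha\geq 1} R\, x^{-\alpha}$ is a free (hence flat) $R$-module, the inclusion $N\hookrightarrow M$ tensors to an inclusion $G(N)\hookrightarrow G(M)$, which by the functoriality established after Definition \ref{Gfunctor} is a morphism of $D(S,R)$-modules. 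Thus $G(N)$ is a $D(S,R)$-submodule of $G(M)$ that contains all the $m_{i,j}\otimes x^{-\alpha_{i,j}}$, and therefore contains all the generators $u_i$; it follows that $G(N)=G(M)$. Applying Theorem \ref{TeoEquiv}, which asserts that $G$ is an equivalence of categories (with quasi-inverse $\widetilde{G}$), we conclude $N = \widetilde{G}(G(N)) = \widetilde{G}(G(M)) = M$, so $M$ is finitely generated over $R$.

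The main obstacle I anticipate is the backward direction: one must recognize that the $R$-submodule $N$ built from the "coefficients" of a $D(S,R)$-generating set already captures all of $M$, and this requires both the flatness of $S_x/S$ over $R$ (to view $G(N)$ as a $D(S,R)$-submodule of $G(M)$) and the equivalence of categories of Theorem \ref{TeoEquiv} (to convert equality of images under $G$ back into equality in $\cC$). The forward direction is essentially a bookkeeping exercise once \eqref{directsumdecomp} and \eqref{daction2} are in hand.
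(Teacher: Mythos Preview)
Your proof is correct and follows essentially the same approach as the paper: the forward direction is identical, and for the backward direction both arguments extract the finitely many $R$-coefficients $m_{i,j}$ from a $D(S,R)$-generating set and show they generate $M$. The only difference is cosmetic: where the paper closes with the direct-sum decomposition \eqref{directsumdecomp} (reading off the degree $x^{-1}$ component to get $N=M$), you invoke the equivalence of Theorem \ref{TeoEquiv}; both are valid and equally short.

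One small phrasing issue: in the forward direction you write that $n_j\otimes x^{-\alpha_j}$ is ``up to a unit scalar in $\QQ$'' equal to $\deriv{x}{\alpha_j-1}(n_j\otimes x^{-1})$. Taken literally this is problematic when $R$ has positive characteristic, since $(\alpha_j-1)!$ need not be invertible and $\deriv{x}{\alpha_j-1}$ alone need not lie in $D(S,R)$. What \eqref{daction2} actually gives is $n_j\otimes x^{-\alpha_j} = (-1)^{\alpha_j-1}\,\frac{1}{(\alpha_j-1)!}\deriv{x}{\alpha_j-1}(n_j\otimes x^{-1})$, where the divided-power operator $\frac{1}{(\alpha_j-1)!}\deriv{x}{\alpha_j-1}$ is in $D(S,R)$ by definition; so just cite \eqref{daction2} directly and drop the $\QQ$-scalar remark.
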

\begin{proof}
Given $m_1,\ldots,m_s\in M$, generators for $M$ as $R$-module, 
 $m_1 \otimes x^{-1} ,\ldots,m_s\otimes x^{-1}$
generate $G(M)$ as a $D(S,R)$-module: by \eqref{daction}, for $\beta \in \NN$,
$m_i\otimes x^{-\beta}
=\frac{(-1)^{\beta-1}}{(\beta-1)!}
\deriv{x}{\beta-1}
(m_i \otimes x^{-1})$, and the set
$\{ m_i\otimes x^{-\beta} \ | \ 1 \leq i \leq s, \beta \in \NN \}$ generates $G(M)$ as an $R$-module.

If $u_1, \ldots, u_s\in G( M)$ is a set generators for $G(M)$ as a $D(S,R)$-module, then each $u_i$ can be written as 
$u_i = m_{i,1}\otimes x^{-1} + m_{i,2} \otimes x^{-2}  + \ldots + m_{i,\ell_i}\otimes x^{-\ell_i}$ for some $\ell_i \in \NN$ and $m_{i,j} \in M$. 
Then $\{ m_{i,j}\otimes x^{-j} \ | \ 1 \leq i \leq s, 1 \leq j \leq \ell_i\}$ is also a set of generators for $G(M)$ as a $D(S,R)$-module.  Since $m_{i,j}\otimes x^{-j}=\frac{(-1)^{j-1}}{(j-1)!}\deriv{x}{j-1}
(m_{i,j}\otimes x^{-1})$, the decomposition in \eqref{directsumdecomp} implies that the $m_{i,j}$ must generate $M$.
\end{proof}

\begin{Cor}\label{CorLen}
Let $R$ be a Noetherian ring, $M$ an $R$-module, and $S = R[[x]]$. 
Then $\Length_R (M)=\Length_{D(S,R)} G(M)$.
\end{Cor}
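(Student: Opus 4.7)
The plan is to leverage the categorical equivalence from Theorem \ref{TeoEquiv} to transport composition series between $\cC$ and $\cD$, and then observe that composition series in $\cD$ coincide with composition series in the ambient category of $D(S,R)$-modules.

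First, I would verify that $\cD$ is closed under subobjects and quotients inside the category of $D(S,R)$-modules. If $N \in \cD$, then by definition $\Supp_S(N) \subseteq \cV(xS)$, and since the support of a submodule or quotient of $N$ is contained in $\Supp_S(N)$, any such submodule or quotient again lies in $\cD$. In particular, for any $R$-module $M$, a chain of $D(S,R)$-submodules of $G(M)$ is the same data as a chain in $\cD$, and simplicity of a quotient in $\cD$ is the same as simplicity as a $D(S,R)$-module. Consequently, $\Length_{D(S,R)} G(M) = \Length_{\cD} G(M)$.

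Next, using that $G: \cC \to \cD$ is an equivalence of categories (Theorem \ref{TeoEquiv}), $G$ preserves and reflects monomorphisms, epimorphisms, and simple objects, hence takes composition series in $\cC$ to composition series in $\cD$, and its inverse $\widetilde{G}$ does the reverse. Therefore a composition series
\[
0 = M_0 \subsetneq M_1 \subsetneq \cdots \subsetneq M_r = M
\]
of $R$-modules yields a composition series
\[
0 = G(M_0) \subsetneq G(M_1) \subsetneq \cdots \subsetneq G(M_r) = G(M)
\]
in $\cD$, with simple factors $G(M_i/M_{i-1})$; conversely, a composition series of $G(M)$ in $\cD$ descends via $\widetilde{G}$ to one of $M$. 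This gives the equality $\Length_R(M) = \Length_{\cD} G(M)$.

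Combining the two equalities yields $\Length_R(M) = \Length_{D(S,R)} G(M)$, with the convention that both sides are simultaneously infinite if $M$ has no finite composition series (by Jordan--H\"older, or by Proposition \ref{PropFG} applied to each term of an infinite ascending chain of finitely generated submodules). The only step requiring any care is the closure of $\cD$ under $D(S,R)$-submodules and quotients, since one must ensure that measuring length inside $\cD$ is the same as measuring length in the full category of $D(S,R)$-modules; this is the conceptual heart of the argument but is immediate from the behavior of support under sub- and quotient modules.
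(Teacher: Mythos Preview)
Your proposal is correct and follows essentially the same approach as the paper: both arguments use the equivalence of Theorem \ref{TeoEquiv} to identify submodule lattices, with the key observation (which you make explicit and the paper leaves implicit) that $\cD$ is closed under $D(S,R)$-submodules and quotients, so that simplicity and length in $\cD$ agree with simplicity and length in the ambient category of $D(S,R)$-modules. The paper phrases this as ``the $D(S,R)$-submodules of $G(M)$ correspond precisely to $R$-submodules of $M$,'' which is exactly your two-step reduction collapsed into one sentence.
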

\begin{proof}
If $M$ is a simple nonzero $R$-module, then $G(M)$ is a simple $D(S,R)$-module since
the $D(S,R)$-submodules of $G(M)$ correspond precisely to $R$-submodules of $M$ by Theorem \ref{TeoEquiv}.
Now say that $\Length_{R}(M)=h< \infty$, so that we have a filtration of $R$-modules $0 = M_0\subsetneq M_1 \subsetneq \ldots \subsetneq M_h=M$
such that each $M_{j+1}/M_j$ is a
simple $R$-module. Then $0 = G(M_0)\subseteq G(M_1) \subseteq \ldots \subseteq G(M_h)=G(M)$ is a filtration of $D(S,R)$-modules such that
$G(M_{j+1})/G(M_j)\cong G(M_{j+1}/M_j)$ is a simple $D(S,R)$-module for every $j$ by our initial argument. Therefore, $\Length_{D(S,R)}(G(M)) = h$. 
Similarly, if $\Length_{R}(M)$ $=\infty$, then $\Length_{D(S,R)}(G(M))=\infty$.  
\end{proof} 
\begin{Rem}
In the following work, we often make use of the following observation:
for $R$ a ring and $S=R[[x]]$,
if $P$ is a prime ideal of $R$, then $(P,x)S$ is a prime ideal of $S$ since $S/(P,x)S=R/P$ is a domain.
\end{Rem}

\begin{Prop}
Let $R$ be a Noetherian ring, $M$ an $R$-module, and $S=R[[x]]$.  Then
$
\Ass_S G(M)=\{(P,x)S\mid P\in \Ass_R M\}.
$
\end{Prop}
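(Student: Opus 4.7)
The plan is to prove both inclusions separately, exploiting the direct sum decomposition $G(M) = \bigoplus_{\alpha \geq 1} M \otimes Rx^{-\alpha}$ from \eqref{directsumdecomp}.

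For the inclusion $\supseteq$, I would fix $P \in \Ass_R M$ with $P = \Ann_R(m)$ and compute directly that $\Ann_S(m \otimes x^{-1}) = (P,x)S$. For $f = \sum_{j \geq 0} r_j x^j \in R[[x]]$, an easy calculation gives $f \cdot (m \otimes x^{-1}) = r_0 m \otimes x^{-1}$ in $S_x/S$ (the higher terms $r_j x^{j-1}$ with $j \geq 1$ lie in $S$, so they vanish). This vanishes iff $r_0 m = 0$, i.e.\ iff $r_0 \in P$, yielding $\Ann_S(m \otimes x^{-1}) = PS + xS = (P,x)S$; this ideal is prime by the preceding remark, so $(P,x)S \in \Ass_S G(M)$.

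For $\subseteq$, I would exhaust $G(M)$ by the ascending chain of $S$-submodules $F_\alpha := \bigoplus_{j=1}^{\alpha} M \otimes Rx^{-j}$. The key point is that each $F_\alpha$ is $S$-stable: this follows from $x \cdot (m \otimes x^{-j}) = m \otimes x^{-j+1}$ (and $x \cdot (m \otimes x^{-1}) = 0$), together with the evident $R$-stability. Consequently, each successive quotient $F_\alpha / F_{\alpha-1} \cong M \otimes Rx^{-\alpha}$ is annihilated by $x$, so it is naturally an $S/xS \cong R$-module isomorphic to $M$. To finish, I would invoke two routine facts: (i) for any $R$-module $N$, viewed as an $S$-module via $S \twoheadrightarrow R$, the associated primes are $\Ass_S(N) = \{(P,x)S : P \in \Ass_R N\}$, since the constant-term projection identifies $\Ann_S(n)$ with $\Ann_R(n) + xS$; and (ii) $\Ass_S$ satisfies $\Ass_S(N) \subseteq \Ass_S(N') \cup \Ass_S(N/N')$ and commutes with directed unions. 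Assembling these gives
\[
\Ass_S G(M) = \bigcup_{\alpha \geq 1} \Ass_S(F_\alpha) \subseteq \bigcup_{\alpha \geq 1} \bigcup_{j=1}^{\alpha} \Ass_S\!\bigl(F_j/F_{j-1}\bigr) = \{(P,x)S : P \in \Ass_R M\},
\]
completing the argument. The main obstacle is really just the verification that the $F_\alpha$ are $S$-submodules (not merely $R$-submodules); this is the only place where the specific $D(S,R)$-module structure of $G(M)$ enters nontrivially, and it is immediate from the description of $x$ acting as a downward shift of the grading. Everything after that is routine bookkeeping with standard properties of $\Ass$.
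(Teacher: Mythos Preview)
Your proof is correct. The $\supseteq$ direction matches the paper's argument essentially verbatim. For $\subseteq$, however, you take a different route: you filter $G(M)$ by the $S$-submodules $F_\alpha$ and use the standard inclusion $\Ass_S(N) \subseteq \Ass_S(N') \cup \Ass_S(N/N')$ together with the computation $\Ass_S(M) = \{(P,x)S : P \in \Ass_R M\}$ for $M$ viewed as an $S/xS$-module. The paper instead argues directly: given $Q = \Ann_S u \in \Ass_S G(M)$, some power of $x$ kills $u$ (since $G(M)$ is $x$-torsion), so $x \in Q$ by primality, hence $xu = 0$ and $u \in \Ann_{G(M)}(xS) \cong M$ (invoking Theorem~\ref{TeoEquiv}); writing $u = m \otimes x^{-1}$, one reads off $Q = (\Ann_R m, x)S$. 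The paper's argument is shorter and exploits primality of $Q$ to reduce immediately to the bottom layer $F_1$, whereas your filtration argument is more mechanical but entirely self-contained---it does not invoke the equivalence of categories. One small correction: your verification that the $F_\alpha$ are $S$-submodules uses only the $S$-module structure (how $x$ shifts the grading), not the $D(S,R)$-module structure; the differential operators play no role in this proposition.
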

\begin{proof}
Let $Q\in\Ass_S G(M)$, so that $Q=\Ann_S u$ for some $u\in G(M)$. As $H^0_{xS}\left(G(M)\right)=G(M)$, $x\in Q$. Thus, 
$u\in \Ann_{G(M)} xS\cong M$ (the isomorphism is due to Theorem \ref{TeoEquiv}). Moreover, we have the natural epimorphism $R\surj S/Q$ with kernel $P=\Ann_R u \in \Ass_R M$.  Thus, $Q=(P,x)S$. 
 
Take $Q = (P,x)S$, where $P = \Ann_R u \in \Ass_R M$, $u \in M$.  
Then $Q=\Ann_S (u\otimes x^{-1})$. Hence, $Q\in \Ass_S G(M)$. 
\end{proof}

\begin{Lemma}\label{LemmaIdeal} 
Let $R$ be a Noetherian ring, $M$ an $R$-module, and $S= R[[x]]$. 
Then for every ideal $I\subseteq R$ and all $j\in \NN$, $G\left(H^j_I(M)\right)=H^{j+1}_{(I,x)S}(M\otimes_R S)$.
\end{Lemma}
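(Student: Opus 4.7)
My plan is to show that both $G(H^j_I(M))$ and $H^{j+1}_{(I,x)S}(M\otimes_R S)$ are naturally isomorphic to the common expression $H^1_{xS}\bigl(H^j_{IS}(M\otimes_R S)\bigr)$. Throughout, write $N := M\otimes_R S$.

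For the first identification, I would observe that $S_x/S = \bigoplus_{\alpha\geq 1}R\cdot x^{-\alpha}$ is $R$-free, and hence $R$-flat. Therefore tensoring the short exact sequence $0\to S\to S_x\to S_x/S\to 0$ over $R$ with $H^j_I(M)$ preserves exactness; flat base change (using that $S$ is flat over the Noetherian ring $R$) rewrites the first two terms as $H^j_{IS}(N)$ and $H^j_{IS}(N)_x$, and the third as $G(H^j_I(M))$. The resulting short exact sequence
\[
0\to H^j_{IS}(N)\longrightarrow H^j_{IS}(N)_x\longrightarrow G\bigl(H^j_I(M)\bigr)\to 0
\]
exhibits $G(H^j_I(M))$ as the cokernel of localization at $x$, which by the \v Cech complex definition is $H^1_{xS}(H^j_{IS}(N))$.

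For the second identification, I would invoke the Grothendieck spectral sequence attached to the factorization $\Gamma_{(I,x)S}=\Gamma_{xS}\circ\Gamma_{IS}$ (immediate from definitions, and valid cohomologically since local cohomology carries injectives to injectives):
\[
E_2^{p,q}=H^p_{xS}\bigl(H^q_{IS}(N)\bigr)\Longrightarrow H^{p+q}_{(I,x)S}(N).
\]
Principality of $xS$ forces $H^p_{xS}=0$ for $p\geq 2$, so the spectral sequence collapses to short exact sequences
\[
0\to H^1_{xS}\bigl(H^j_{IS}(N)\bigr)\to H^{j+1}_{(I,x)S}(N)\to H^0_{xS}\bigl(H^{j+1}_{IS}(N)\bigr)\to 0.
\]

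The technical crux, which I expect to be the main obstacle, is verifying that the rightmost term vanishes, i.e., that $x$ acts as a non-zero-divisor on $H^{j+1}_{IS}(N)\cong H^{j+1}_I(M)\otimes_R S$. I would handle this by tensoring the short exact sequence $0\to S\xrightarrow{\ x\ }S\to R\to 0$ of $R$-modules with $H^{j+1}_I(M)$: since $R$ is free over itself, $\operatorname{Tor}_1^R(H^{j+1}_I(M),R)=0$, so multiplication by $x$ remains injective on $H^{j+1}_I(M)\otimes_R S$. Combining this with the two previous steps yields the claimed isomorphism $H^{j+1}_{(I,x)S}(N)\cong H^1_{xS}(H^j_{IS}(N))\cong G(H^j_I(M))$.
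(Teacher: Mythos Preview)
Your argument is correct and shares its first half with the paper's proof: both obtain the short exact sequence $0 \to H^j_{IS}(N) \to H^j_{IS}(N)_x \to G(H^j_I(M)) \to 0$ from the $R$-flatness of $S$, $S_x$, and $S_x/S$. For the second half, the paper avoids the Grothendieck spectral sequence and instead invokes the standard long exact sequence
\[
\cdots \to H^j_{(I,x)S}(N) \to H^j_{IS}(N) \to H^j_{IS}(N)_x \to H^{j+1}_{(I,x)S}(N) \to \cdots,
\]
observing that the injectivity just established (valid for every $j$) splits it into short exact sequences $0 \to H^j_{IS}(N) \to H^j_{IS}(N)_x \to H^{j+1}_{(I,x)S}(N) \to 0$, which finishes immediately. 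Your spectral-sequence route encodes the same information: the two-column collapse you describe is precisely this long exact sequence. One small simplification: your separate $\operatorname{Tor}$ argument for the vanishing of $H^0_{xS}(H^{j+1}_{IS}(N))$ is unnecessary, since that vanishing is exactly the injectivity of $H^{j+1}_{IS}(N)\to H^{j+1}_{IS}(N)_x$, which you already established in your first step (with $j$ replaced by $j+1$).
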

\begin{proof}
Since $S$ and $S_x$ are flat $R$-algebras and $S_x/S$ is a free $R$-module, we know that
$H^j_I(M)\otimes_R S=H^j_{IS}(M\otimes_R S)$, $H^j_I(M)\otimes_R S_x=H^j_{IS}(M\otimes_R S_x)$ and 
$H^j_I(M)\otimes_R S_x/S=H^j_{IS}(M\otimes_R S_x/S)$. Moreover,  the sequence
\begin{equation} \label{exseq1}
0\to H^j_{IS}(M\otimes_R S)\to H^j_{IS}(M\otimes_R S_x)\to H^j_{IS}(M\otimes_R S_x/S) \to 0
\end{equation}
is exact, so $G(H^j_{I}(M)) = H^j_{IS}(M\otimes_R S_x) / H^j_{IS}(M\otimes_R S).$
 
On the other hand, we have a long exact sequence
\begin{equation*}
\cdots\to H^j_{(I,x)S}(M\otimes_R S)\to H^j_{IS}(M\otimes_R S)\to H^j_{IS}(M\otimes_R S_x) \to \cdots.
\end{equation*}
Since $H^j_{IS}(M\otimes_R S)\to H^j_{IS}(M\otimes_R S_x)$ is injective by \eqref{exseq1}, the long sequence splits into
short exact sequences 
\begin{equation*}
0 \to H^j_{IS}(M\otimes_R S)\to H^j_{IS}(M\otimes_R S_x) \to H^{j+1}_{(I,x)S}(M\otimes_R S)\to 0.
\end{equation*}
Hence, $G\left(H^j_{I}(M)\right)=H^{j+1}_{(I,x)S}(M\otimes_R S )$.
\end{proof}

\begin{Prop}\label{PropIdeals} 
Let $(R,m,K)$ be a Noetherian local ring, $M$ an $R$-module, and $S=R[[x]]$. 
Fix $I_1,\ldots,I_s$ ideals of $R$ and $j_1,\ldots j_s \in \NN$. 
Then  \begin{small} $$G\left(\itLC{j}{s}{M} )\right) \cong
H^{j_s}_{(I_s,x)S} \cdots H^{j_2}_{(I_2,x)S}H^{j_1+1}_{(I_1,x)S}(M\otimes_R S).$$ \end{small}
\end{Prop}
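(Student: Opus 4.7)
The plan is to induct on $s$, with Lemma \ref{LemmaIdeal} serving as the base case $s=1$. For the inductive step, suppose the formula holds for $s-1$; set $N := H^{j_{s-1}}_{I_{s-1}} \cdots H^{j_1}_{I_1}(M)$ and $T := G(N)$. The inductive hypothesis identifies $T$ with the claimed $(s-1)$-fold iterated local cohomology module over $S$, so the task reduces to the single identity
$$G(H^{j_s}_{I_s}(N)) \cong H^{j_s}_{(I_s,x)S}(T).$$

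I would split this identity into two steps. First, since $S_x/S = \bigoplus_{\alpha \geq 1} R x^{-\alpha}$ is a free (hence flat) $R$-module, the functor $-\otimes_R S_x/S$ is exact and commutes with the \v{C}ech complex computing local cohomology with respect to $R$-ideals. This yields
$$G(H^{j_s}_{I_s}(N)) = H^{j_s}_{I_s}(N) \otimes_R S_x/S \cong H^{j_s}_{I_s S}(N \otimes_R S_x/S) = H^{j_s}_{I_s S}(T),$$
where $H^{j_s}_{I_s S}(T)$ is computed by the \v{C}ech complex on any set of generators of $I_s$ viewed inside $S$. Second, I would show that $H^{j_s}_{I_s S}(T) \cong H^{j_s}_{(I_s,x)S}(T)$. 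Since $T = G(N)$ lies in $\cD$ (Theorem \ref{TeoEquiv}), it is $x$-torsion. Choosing generators $f_1, \ldots, f_r$ of $I_s$, every summand in the \v{C}ech complex on $(f_1, \ldots, f_r, x)$ that involves a localization at a multiple of $x$ vanishes, so that complex coincides termwise with the \v{C}ech complex on $(f_1, \ldots, f_r)$. Combining the two steps gives the desired identity and completes the induction.

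The main obstacle is precisely the second step: the reduction from $(I_s,x)S$ back to $I_s S$. This is the only place where the iteration is not a formal consequence of flat base change, and it is where the hypothesis that $G(N)$ is supported on $\cV(xS)$ is crucial. Once this \v{C}ech-complex observation is in place, the rest of the argument is routine bookkeeping, and the expected $+1$ shift appears only at the innermost level because it enters the computation only once, through the base case.
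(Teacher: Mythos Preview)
Your proof is correct and follows essentially the same approach as the paper: induction on $s$ with Lemma \ref{LemmaIdeal} as base case, then for the inductive step the two-part reduction via flat base change and the observation that $G(N)$ is $x$-torsion. The only cosmetic difference is in the second step, where the paper invokes the long exact sequence relating $H^*_{I_sS}(-)$, $H^*_{(I_s,x)S}(-)$, and $H^*_{I_sS}(-\otimes_S S_x)$ and kills the third term using $G(N)\otimes_S S_x=0$, whereas you compare \v{C}ech complexes directly; both arguments rest on the same vanishing $T_x=0$.
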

\begin{proof}
We proceed by induction on $s$.
If $s=1$, the statement follows from Lemma \ref{LemmaIdeal}. Suppose it holds for some $s\geq1$.
Let $N_\ell =H^{j_\ell }_{I_\ell }    \ldots  H^{j_2}_{I_2}H^{j_1}_{I_1}(M)$
for $1\leq \ell\leq s+1$, so we need to prove that 
$G(N_{s+1}) \cong H^{j_{s+1}}_{(I_{s+1},x )S} (G(N_s))$.
Now, $$G(N_{s+1})=H^{j_{s+1}}_{I_{s+1}} (N_s)\otimes_R S_x/ S
\cong H^{j_{s+1}}_{I_{s+1}S} (N_s\otimes_R S_x/ S)
=
H^{j_{s+1}}_{I_{s+1} S} (G(N_s)).
$$
Consider the long exact sequence of functors
\begin{equation}
\ldots\to H^{j_{s+1}}_{I_{s+1}S} (-)\to H^{j_{s+1}}_{(I_{s+1},x)S}(-)\to H^{j_{s+1}}_{I_{s+1}S}(-\otimes_S S_x) \to \ldots.
\end{equation}
Since $G(N_s)$ is supported on $\cV(xS)$,  $H^{i}_{I_{s+1} S} (G(N_s)\otimes_S S_x)=0$ for all $i\in \NN$, and $G(N_s)\otimes_S S_x = 0$. 
Moreover,
$H^{j_{s+1}}_{I_{s+1} S} (G(N_s))\cong H^{j_{s+1}}_{(I_{s+1},x) S} (G(N_s)).$
Hence, $G(N_{s+1})\cong H^{j_{s+1}}_{(I_{s+1},x )S} (G(N_s))$.
\end{proof}

As $G$ is an equivalence of categories, $G(\Hom_R(M,N))=\Hom_{D(S,R)}(G(N),G(M))$. Thus, $M$ is an injective $R$-module if and only if $G(M)$ is an injective  object in $\cD$, the category of $D(S,R)$-modules supported at $\cV(xS)$. We now characterize precisely when $G(M)$ is injective as an $S$-module:
 
\begin{Prop}\label{PropInj}
Let $S=R[[x]]$, where $R$ is a Gorenstein ring.
Given a prime ideal $P$ of $R$, let $E_R(R/P)$ denote the injective hull of $R/P$ over $R$. Then $G(E_R(R/P))=E_S (S/(P,x)S)$. Moreover,   
$M$ is an injective $R$-module if and only if $G(M)$ is an injective $S$-module. 
\end{Prop}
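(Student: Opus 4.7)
The plan is to first reduce the biconditional ``moreover'' statement to the identification $G(E_R(R/P)) = E_S(S/(P,x)S)$, then to establish this identification via the inverse equivalence $\widetilde G$ of Theorem \ref{TeoEquiv}. Since $R$ is Gorenstein, so is $S = R[[x]]$, and both are Noetherian, so Matlis' structure theorem applies: every injective $R$-module (resp.\ $S$-module) is a direct sum of indecomposable injectives $E_R(R/P)$ (resp.\ $E_S(S/Q)$), and arbitrary direct sums of injectives remain injective. Since $G = (-) \otimes_R S_x/S$ commutes with direct sums, the identification $G(E_R(R/P)) = E_S(S/(P,x)S)$ yields the forward direction of the biconditional at once. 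For the converse, if $G(M) = \bigoplus_\beta E_S(S/Q_\beta)$ is $S$-injective, then each summand $E_S(S/Q_\beta)$ is supported on $\cV(xS)$, which forces $Q_\beta \supseteq xS$ and hence $Q_\beta = (P_\beta,x)S$ for a unique prime $P_\beta$ of $R$; then $G(M) \cong G(\bigoplus_\beta E_R(R/P_\beta))$, and the equivalence from Theorem \ref{TeoEquiv} gives $M \cong \bigoplus_\beta E_R(R/P_\beta)$, which is $R$-injective.

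To prove $G(E_R(R/P)) = E_S(S/(P,x)S)$, I would use Theorem \ref{TeoEquiv} to reduce to two claims: (a) $E := E_S(S/(P,x)S)$ lies in $\cD$, i.e., it is a $D(S,R)$-module supported on $\cV(xS)$; and (b) $\widetilde G(E) = \Ann_E(xS) \cong E_R(R/P)$. Given both, $G(E_R(R/P)) = G(\widetilde G(E)) = E$ by the equivalence. The support of $E$ is contained in $\cV((P,x)S) \subseteq \cV(xS)$ since $E$ is $(P,x)S$-coprimary. For the $D(S,R)$-structure required in (a), the Gorenstein hypothesis propagates from $R$ to $S$, so that $S_{(P,x)S}$ is Gorenstein local and local duality identifies $E$ with the top local cohomology $H^{h}_{(P,x)S \cdot S_{(P,x)S}}(S_{(P,x)S})$ with $h = \height (P,x)S$; the localization $S_{(P,x)S}$ inherits a natural $D(S,R)$-structure since differential operators extend to localizations, and local cohomology preserves this structure (via the \v{C}ech complex).

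For (b), set $N = \Ann_E(xS)$; I will show $N$ is an essential $R$-submodule extension of $R/P = S/(P,x)S$ and is $R$-injective, forcing $N \cong E_R(R/P)$. \emph{Essentiality:} given $0 \neq u \in N$, the $S$-essentiality of $S/(P,x)S \hookrightarrow E$ provides $s \in S$ with $0 \neq su \in S/(P,x)S = R/P$; decomposing $s = r + xs'$ with $r \in R$ and $s' \in S$ (using the $R$-module splitting $S = R \oplus xS$) and invoking $xu = 0$ yields $ru = su \neq 0$ in $R/P$. \emph{$R$-injectivity:} for every $R$-module $L$, regarded as an $S$-module via $S \surj R$, every $S$-linear map $L \to E$ has image annihilated by $x$, so factors through $N$, giving a natural identification $\Hom_R(L,N) = \Hom_S(L,E)$. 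Since $E$ is $S$-injective, the functor $\Hom_S(-,E)$ is exact, so $\Hom_R(-,N)$ is exact on $R$-modules, proving that $N$ is $R$-injective.

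The main obstacle is step (a)---verifying that $E_S(S/(P,x)S)$ actually carries a $D(S,R)$-module structure. This is genuinely nontrivial because not every $S$-module supported on $\cV(xS)$ is a $D(S,R)$-module; for instance, in characteristic zero, $S/x^n S$ with $n \geq 2$ cannot be a $D(S,R)$-module, since the relation $\frac{1}{(n-1)!}\partial_x^{n-1}(x^{n-1}) = 1$ would be forced to vanish in the quotient. The Gorenstein assumption on $R$ is precisely what permits writing $E_S(S/(P,x)S)$ as a local cohomology of a localization of $S$, where the $D(S,R)$-structure is automatic.
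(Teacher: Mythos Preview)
Your proof is correct, but it takes a genuinely different route from the paper for the key identification $G(E_R(R/P)) \cong E_S(S/(P,x)S)$. The paper goes \emph{forward} through $G$: it uses the Gorenstein hypothesis to write $E_R(R/P) \cong H^d_{PR_P}(R_P)$ with $d = \dim R_P$, then invokes Lemma~\ref{LemmaIdeal} (together with flatness of $S_x/S$ over $R$ to pass the localization at $P$ through) to obtain $G(E_R(R/P)) \cong H^{d+1}_{(P,x)S}(S_P) \cong E_S(S/(P,x)S)$. You instead go \emph{backward} through $\widetilde G$: you put a $D(S,R)$-structure on $E_S(S/(P,x)S)$ via its realization as local cohomology of $S_{(P,x)S}$, and then identify $\Ann_{E}(xS)$ with $E_R(R/P)$ by checking essentiality and $R$-injectivity directly. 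Both arguments use the Gorenstein hypothesis in the same way (to realize injective hulls as local cohomology), but the paper's route is shorter because Lemma~\ref{LemmaIdeal} has already packaged the passage from $H^j_I$ over $R$ to $H^{j+1}_{(I,x)S}$ over $S$; your route is more self-contained but must confront head-on the issue you flag at the end, namely that $E_S(S/(P,x)S)$ is not \emph{a priori} a $D(S,R)$-module.

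One further observation: your step~(b) argument that $\Ann_E(xS)$ is $R$-injective---via the natural identification $\Hom_R(L,\Ann_E(xS)) = \Hom_S(L,E)$ for $R$-modules $L$---is exactly the argument the paper uses for the converse in the ``moreover'' clause, applied there with $E$ replaced by $G(M)$. So your separate Matlis-decomposition proof of the converse, while correct, is redundant: once you have step~(b), you could simply note that $M = \widetilde G(G(M)) = \Ann_{G(M)}(xS)$ is $R$-injective whenever $G(M)$ is $S$-injective. (A minor caveat on your Matlis argument as written: the decomposition $G(M) \cong \bigoplus_\beta E_S(S/Q_\beta)$ is only an $S$-module isomorphism, not obviously $D(S,R)$-linear, so invoking ``the equivalence'' is slightly imprecise; what actually saves you is that $\widetilde G = \Ann_{(-)}(xS)$ depends only on the underlying $S$-module.)
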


\begin{proof}
Let $d=\Dim (R_P)$.  
Since $R$ is a Gorenstein ring, $S_x/S$ a flat $R$-module, and $G(H^d_P(R)) \cong H^{d+1}_{(P,x)S}(S)$ by Lemma \ref{LemmaIdeal}, we have that
\begin{align*}
G(E_R(R/P))\cong G( H^d_{P R_P}(R_P))& \cong G( H^d_{P}(R)\otimes_R R_P)\\
& \cong G( H^d_P(R))\otimes_R R_P \cong  H^{d+1}_{(P,x)S}(S_P).
\end{align*} 
As $S_P/(P,x)S_P \cong R_P/PR_P$, $(P,x)S_P$ is a maximal ideal of the Gorenstein ring $S_P$, so
$$H^{d+1}_{(P,x)S}(S_P)=E_{S_P}\left(S_P/(P,x)S_P\right)=E_S\left(S/(P,x)S\right).$$ Therefore, $G\left(E_R(R/P)\right)=E_S \left(S/(P,x)S\right)$. Moreover, $G$ sends injective $R$-modules to 
injective $S$-modules because every
injective $R$-module is a direct sum of injective hulls of prime ideals.

It remains to prove that if $G(M)$ is an injective $S$-module, then $M$ is an injective $R$-module. This follows because $M=\Ann_{G(M)}(xS)$ by Theorem \ref{TeoEquiv}:  any injection of $R$-modules $\iota: N \hookrightarrow N'$ is also an injection of $S$-modules, where $x$ acts by zero.  Then any $S$-module map $f: N \to G(M)$ is an $R$-module map and must have image in $\Ann_{G(M)}(xS) = M$, so the induced map $g: N \to M$ is a map of $R$-modules such that $f = g \circ \iota$.
\end{proof}

\begin{Prop}\label{PropExt}
Let $R$ be a Gorenstein ring, and let $S = R[[x]].$
Since $R= S/xS$,  every $R$ module has an structure of $S$-module via extension of scalars. For $R$-modules $M, N$ and $i,j \in \NN$, $$\Ext^{i}_S (M,G(N))=\Ext^{i}_R (M,N).$$
\end{Prop}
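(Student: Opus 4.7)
The plan is to derive the isomorphism by computing $\Ext^i_S(M, G(N))$ via an injective resolution of $N$ over $R$, exploiting the fact that $G$ is exact and preserves injectives in the Gorenstein setting.

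First, I would take an injective resolution $N \to E^\bullet$ of $N$ in the category of $R$-modules. Applying the functor $G$ gives a complex $G(N) \to G(E^\bullet)$. This complex is exact because $G$ is an exact functor (noted just after Definition \ref{Gfunctor}), and each $G(E^i)$ is an injective $S$-module by Proposition \ref{PropInj}, which critically uses the Gorenstein hypothesis on $R$. Thus $G(E^\bullet)$ is an injective resolution of $G(N)$ over $S$, and we have the identification
\[
\Ext^i_S(M, G(N)) = H^i\bigl(\Hom_S(M, G(E^\bullet))\bigr).
\]

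The main step is then to establish, for any $R$-module $E$, a natural isomorphism
\[
\Hom_S(M, G(E)) \cong \Hom_R(M, E),
\]
where $M$ is regarded as an $S$-module via $S \twoheadrightarrow S/xS = R$, so that $x$ acts as zero on $M$. Given an $S$-linear map $f : M \to G(E)$, for any $m \in M$ we have $x f(m) = f(xm) = 0$, so $f(m) \in \Ann_{G(E)}(xS)$. By Theorem \ref{TeoEquiv}, $\Ann_{G(E)}(xS) = \widetilde{G}(G(E)) \cong E$ as $R$-modules, so $f$ factors through the natural inclusion $E \hookrightarrow G(E)$, yielding an $R$-linear map $\bar f : M \to E$. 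Conversely, any $R$-linear map $g : M \to E$ composes with the inclusion $E \cong \Ann_{G(E)}(xS) \hookrightarrow G(E)$ to give an $S$-linear map (well defined since $x$ acts as zero on both $M$ and $\Ann_{G(E)}(xS)$), and these two constructions are mutually inverse. Naturality in both $M$ and $E$ is immediate.

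Applying this natural isomorphism levelwise to the injective resolution $E^\bullet$ gives an isomorphism of complexes $\Hom_S(M, G(E^\bullet)) \cong \Hom_R(M, E^\bullet)$, and taking cohomology yields
\[
\Ext^i_S(M, G(N)) = H^i\bigl(\Hom_S(M, G(E^\bullet))\bigr) \cong H^i\bigl(\Hom_R(M, E^\bullet)\bigr) = \Ext^i_R(M, N),
\]
as desired. The main obstacle is really just the Hom-identification in the second paragraph: one must carefully track that $x$-annihilation forces the image of any $S$-linear map out of $M$ to land in the copy of $E$ sitting inside $G(E)$, and this is precisely what the equivalence of categories in Theorem \ref{TeoEquiv} provides. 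Everything else is routine homological algebra, with the Gorenstein hypothesis entering exactly once, via Proposition \ref{PropInj}, to ensure $G$ carries the injective resolution of $N$ over $R$ to one of $G(N)$ over $S$.
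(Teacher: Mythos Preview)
Your proof is correct and follows essentially the same approach as the paper: take an injective $R$-resolution of $N$, apply $G$ (using Proposition~\ref{PropInj} and the Gorenstein hypothesis to get an injective $S$-resolution of $G(N)$), and then identify $\Hom_S(M,G(E))\cong\Hom_R(M,E)$. The only cosmetic difference is that the paper packages your $x$-annihilator argument as the functor identity $\Hom_S(M,-)=\Hom_S(M,\Hom_S(R,-))$ together with $\Hom_S(R,G(E^\bullet))=E^\bullet$, which is exactly your observation that any $S$-map out of $M$ lands in $\Ann_{G(E)}(xS)\cong E$.
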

\begin{proof}
Let $E^*=E^0\to E^1\to\ldots \to E^i\to \ldots $ be an injective $R$-resolution of $N$. Then
$G(E^*)$ is an injective $S$-resolution for $G(N)$ by Proposition \ref{PropInj}.
We notice that $\Hom_S (M,-)=\Hom_S(M,\Hom_S(R,-))$ as functors. Then 
\begin{align*}\Hom_S(M,G(E^*))=\Hom_S(M,\Hom_S(R,G(E^*))&=\Hom_S(M,E^*) =\Hom_R(M,E^*), \end{align*}
and the result follows.
\end{proof}
\section{Definitions and First Properties} \label{DefFirstProp}

\begin{Teo}\label{TeoEquivD} 
Let $K$ be a field, let $R=K[[x_1,\ldots, x_n]]$, and let $S=R[[x_{n+1}]]$. 
Let $\cC$ denote the category of $D(R,K)$-modules, and let $\cD$ denote the category of $D(S,K)$-modules that are supported on $\cV(xS)$. Then
\begin{enumerate}
\item[\emph{(i)}] $G:\cC\to\cD$ given by $G(M) = M \otimes_R S_{x_{n+1}}/S$ is an equivalence of categories with inverse $\widetilde{G}:\cD\to \cC$, where $\widetilde{G}(N)=\Ann_N (xS)$,
\item[\emph{(ii)}] $M$ is a finitely generated $D(R,K)$-module if and only if $G(M)$ is a finitely generated $D(S,K)$-module, and
\item[\emph{(iii)}] $\Length_{D(R,K)}M=\Length_{D(S,K)}G(M).$
\end{enumerate}
\end{Teo}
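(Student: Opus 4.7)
The plan is to reduce Theorem \ref{TeoEquivD} to the already-established Theorem \ref{TeoEquiv}, Proposition \ref{PropFG}, and Corollary \ref{CorLen} by carefully promoting the $D(S,R)$-module structure on $G(M)$ to a $D(S,K)$-structure, and, in the other direction, the $R$-module structure on $\widetilde{G}(N)$ to a $D(R,K)$-structure. The key observation is that $D(S,K)$ is generated as a ring by $D(S,R)$ together with the operators $\frac{1}{t!}\deriv{x_i}{t}$ for $1\leq i\leq n$ and $t\in\NN$, and these extra operators commute with $x_{n+1}$.

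For (i), I would first define, for any $D(R,K)$-module $M$, an action of $D(S,K)$ on $G(M)$ by decreeing
\[
\frac{1}{t!}\deriv{x_i}{t}\cdot(m\otimes x_{n+1}^{-\alpha}) = \left(\frac{1}{t!}\deriv{x_i}{t}\cdot m\right)\otimes x_{n+1}^{-\alpha}\qquad (1\leq i\leq n),
\]
combined with the $D(S,R)$-action supplied by Theorem \ref{TeoEquiv}. This is well defined on the direct sum decomposition \eqref{directsumdecomp}, and the needed commutation relations $[\partial_{x_i},\partial_{x_j}]=0$, $[\partial_{x_i},x_j]=\delta_{ij}$, and $[\partial_{x_i},x_{n+1}]=0$ for $i\leq n$ are straightforward to verify on simple tensors, as is the Leibniz rule $[\partial_{x_i},r]=\partial_{x_i}(r)$ for $r\in R$. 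Conversely, for $N\in\cD$, the module $\widetilde{G}(N)=\Ann_N(xS)$ inherits a $D(R,K)$-structure: since $\partial_{x_i}$ commutes with $x_{n+1}$ in $D(S,K)$ for $i\leq n$, the operators $\frac{1}{t!}\deriv{x_i}{t}$ preserve the submodule $\Ann_N(x_{n+1}S)$. Finally, the natural isomorphisms $\widetilde{G}(G(M))\cong M$ and $G(\widetilde{G}(N))\cong N$ from Theorem \ref{TeoEquiv} are directly checked to intertwine the enlarged $D$-module actions, so the functors extend to an equivalence $\cC\simeq\cD$.

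For parts (ii) and (iii), the core point is that the equivalence of Theorem \ref{TeoEquiv} restricts to a bijection between $D(R,K)$-submodules of $M$ and $D(S,K)$-submodules of $G(M)$: given a $D(S,R)$-submodule $N'\subseteq G(M)$, write $N' = G(M')$ for the corresponding $R$-submodule $M'\subseteq M$; the formula above shows that $N'$ is closed under $\frac{1}{t!}\deriv{x_i}{t}$ (for $i\leq n$) precisely when $M'$ is closed under the same operator, using the decomposition $G(M')=\bigoplus_\alpha M'\otimes Rx_{n+1}^{-\alpha}$ and projecting onto the $\alpha=1$ summand. Once this refined correspondence is in hand, the proof of Proposition \ref{PropFG} transfers verbatim (generators $m_1,\ldots,m_s$ of $M$ as a $D(R,K)$-module yield generators $m_i\otimes x_{n+1}^{-1}$ of $G(M)$ as a $D(S,K)$-module, and conversely), establishing (ii), and the argument of Corollary \ref{CorLen}, which simply pushes composition series across the equivalence, gives (iii).

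I do not anticipate any genuine obstacle: once the $D(S,K)$-action on $G(M)$ is written down explicitly, everything is a routine check. The mildly delicate point is ensuring that the isomorphism $\phi$ from the proof of Theorem \ref{TeoEquiv} is compatible not only with multiplication by $x_{n+1}$ and with $\frac{1}{t!}\deriv{x_{n+1}}{t}$, but also with the newly added operators $\frac{1}{t!}\deriv{x_i}{t}$ for $i\leq n$; this reduces to the commutativity $[\partial_{x_i},\partial_{x_{n+1}}]=0$ and the fact that $\phi(m\otimes x_{n+1}^{-\alpha})$ is obtained by applying $\partial_{x_{n+1}}^{\alpha-1}$ (up to sign and factorial), so the two verifications intertwine cleanly.
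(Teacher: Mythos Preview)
Your proposal is correct and matches the paper's approach: the paper's proof is a single sentence stating that the arguments are analogous to those of Theorem \ref{TeoEquiv}, Proposition \ref{PropFG}, and Corollary \ref{CorLen}, and what you have written is precisely a careful unpacking of that analogy. The only difference is one of presentation---you phrase it as promoting the $D(S,R)$-structure to a $D(S,K)$-structure and then invoking the earlier results, whereas the paper implicitly asks the reader to rerun the earlier proofs with $D(R,K)$ and $D(S,K)$ in place of $R$ and $D(S,R)$---but these amount to the same verification.
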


\begin{proof}
The proofs of the statements are analogous to the those of Theorem \ref{TeoEquiv}, Proposition \ref{PropFG}, and Corollary \ref{CorLen}, respectively.
\end{proof}

\begin{Teo}\label{WellDef}
Let $(R,m,K)$ be a local ring  containing a field, so that the completion $\widehat{R}$  admits a surjective ring map $\pi : S\surj \widehat{R}$, where $S=K[[x_1,\ldots, x_n]]$ for some $n \in \NN$.
Fix ideals $I_1,\ldots, I_s$ of $R$ and $j_1,\ldots, j_s \in \NN$.
Let $J_1,\ldots, J_s$ be the corresponding preimages of $I_1 \widehat{R},\ldots, I_s \widehat{R}$ 
in $S$. Then $$ \Length_{D(S,K)} H^{j_s}_{J_s}   \cdots H^{j_2}_{J_2}H^{n-j_1}_{J_1}(S)$$
is finite and depends only on $R$, $I_1,\ldots, I_s$ and $j_1,\ldots, j_s$, but neither on $S$ nor on $\pi$.
\end{Teo}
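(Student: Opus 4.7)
The plan is to split the statement into finiteness and independence of the presentation. Finiteness is immediate: by \cite[Corollary 6]{Lyu2}, recalled in Section~\ref{D-modules}, every iterated local cohomology module of $S=K[[x_1,\ldots,x_n]]$ at ideals of $S$ has finite length in the category of $D(S,K)$-modules. So the substance lies entirely in showing that the length depends on neither the dimension $n$ of the Cohen presentation nor the choice of surjection $\pi:S\surj\widehat{R}$. My strategy is to isolate a single ``add-one-variable'' invariance step, and then pass between any two presentations via a common refinement.

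\textbf{Step A} (add one variable). Suppose $S'=S[[y]]$, let $\rho:S'\surj S$ be the continuous $K$-algebra surjection sending $y\mapsto 0$ and fixing the $x_i$, and set $\pi':=\pi\circ\rho:S'\surj\widehat{R}$. Writing $J_i':=(\pi')^{-1}(I_i\widehat{R})=(J_i,y)S'$, I claim
\[
\Length_{D(S,K)} H^{j_s}_{J_s}\cdots H^{j_2}_{J_2} H^{n-j_1}_{J_1}(S)
=
\Length_{D(S',K)} H^{j_s}_{J_s'}\cdots H^{j_2}_{J_2'} H^{(n+1)-j_1}_{J_1'}(S').
\]
This follows by applying the functor $G=(-)\otimes_S S'_y/S'$ to the module on the left: Proposition~\ref{PropIdeals} (with $S$ as the base Noetherian local ring and $M=S$) identifies $G$ of the left-hand iterated local cohomology with the right-hand one as a $D(S',K)$-module, while Theorem~\ref{TeoEquivD}(iii) guarantees that $G$ preserves $D$-module length. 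The shift $n-j_1\mapsto(n+1)-j_1$ on the innermost exponent is precisely the one forced by Proposition~\ref{PropIdeals}, which is exactly why the ``$n-j_1$'' convention in the definition produces an invariant rather than a spurious quantity.

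To conclude independence for arbitrary presentations $\pi_a:S_a\surj\widehat{R}$ with $S_a=K[[x_1^{(a)},\ldots,x_{n_a}^{(a)}]]$, $a=1,2$, I would form a common refinement. Let $S_3=K[[x_1^{(1)},\ldots,x_{n_1}^{(1)},x_1^{(2)},\ldots,x_{n_2}^{(2)}]]$, pick lifts $\widetilde{x}_j^{(2)}\in S_1$ of the elements $\pi_2(x_j^{(2)})\in\widehat{R}$ (they lie in the maximal ideal of $S_1$ because $\pi_1,\pi_2$ are local), and define $\rho_1:S_3\surj S_1$ by $x_i^{(1)}\mapsto x_i^{(1)}$ and $x_j^{(2)}\mapsto\widetilde{x}_j^{(2)}$, setting $\pi_3:=\pi_1\circ\rho_1$. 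The change of coordinates $z_j:=x_j^{(2)}-\widetilde{x}_j^{(2)}$ identifies $S_3$ with $S_1[[z_1,\ldots,z_{n_2}]]$ in such a way that $\rho_1$ becomes the iterated ``set the last variable to zero'' map and $\pi_3^{-1}(I_i\widehat{R})=(J_i^{(1)},z_1,\ldots,z_{n_2})S_3$. Iterating Step~A $n_2$ times then equates the lengths computed from $(S_1,\pi_1)$ and $(S_3,\pi_3)$; the same argument applied to $(S_2,\pi_2)$ yields equality with the length from $(S_3,\pi_3)$, giving the desired invariance.

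The main obstacle I anticipate is the bookkeeping for the common refinement: verifying that $z_j:=x_j^{(2)}-\widetilde{x}_j^{(2)}$ genuinely defines an automorphism of $S_3$ as a complete local $K$-algebra (this uses $\widetilde{x}_j^{(2)}\in(x_1^{(1)},\ldots,x_{n_1}^{(1)})S_1$), that under this automorphism $\rho_1$ coincides with the ``kill $z_{n_2}$'' followed by ``kill $z_{n_2-1}$'' and so on, and that at each intermediate stage the preimage of $I_i\widehat{R}$ is exactly the ideal produced by the previous application of Step~A. Once this is in place, the rest of the argument is a formal combination of Proposition~\ref{PropIdeals} and Theorem~\ref{TeoEquivD}(iii), together with the already-established finiteness.
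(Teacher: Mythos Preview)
Your proposal is correct and follows essentially the same approach as the paper: both form a common refinement $S_3\cong K[[x^{(1)},x^{(2)}]]$ of the two presentations, perform the change of coordinates $z_j=x_j^{(2)}-\widetilde{x}_j^{(2)}$ to reduce to the situation of adding extra variables killed by the surjection, and then invoke Proposition~\ref{PropIdeals} together with Theorem~\ref{TeoEquivD}(iii) to identify the $D$-module lengths. The only cosmetic difference is that you isolate the ``add one variable'' case as a separate Step~A and iterate, whereas the paper applies Proposition~\ref{PropIdeals} and Theorem~\ref{TeoEquivD} to all the added variables at once; the bookkeeping you flag (that the $z_j$ together with the $x_i^{(1)}$ form a regular system of parameters, so that the coordinate change is a genuine automorphism and the preimage ideals are as claimed) is exactly what the paper verifies as well.
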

\begin{proof}
We may assume without loss of generality that $R$ is complete.
We know that $\Length_{D(S,K)} H^{j_s}_{J_s}  \ldots  H^{j_2}_{J_2}H^{n-j_1}_{J_1}(S)$ is finite by \cite[Corollary $6$]{Lyu2}. 
Let $\pi' : S'\to R$ be another surjection, where $S'=K[[y_1,\ldots,y_{n'}]]$.
Let $J'_1,\ldots, J'_s$ be the corresponding preimages of $I_1,\ldots, I_s $ 
in $S'$.

Let $S''=K[[z_1,\ldots,z_{n+n'}]]$. 
Let $\pi'': S''\to R$ be the surjection defined by $\pi''(z_j)=\pi(x_j)$ for $0\leq j\leq n$
and $\pi''(z_j)=\pi'(y_{j-n})$ for $n+1\leq j\leq n+n'$.
Let $J''_1,\ldots, J''_s$ be the corresponding preimages of $I_1 ,\ldots, I_s $ 
in $S''$ under $\pi''$. Let
 $\alpha:S\to S''$ be the map defined by $\alpha(x_j)=z_j$. We note  that $\pi''\alpha=\pi.$
There exists $f_1,\ldots, f_{n'}\in S$ such that $\pi''(z_{n+j})=\pi(f_j)$ for $j\leq n'.$
Then $z_{n+j}-\alpha(f_j)\in\Ker(\pi'').$ We note that $\beta:S''\to S $ defined by sending
$z_j\to x_j$ for $j\leq n$ and $z_{n+j}\to f_j$ for $j\leq n'$ is an splitting of $\alpha.$
Then
$J_i''=(\alpha(J_i), z_{n+1}-\alpha(f_1),\ldots, z_{n'+n}-\alpha(f_{n'}))S''$. Since 
$$z_1,\ldots, z_n, z_{n+1}-\alpha(f_1),\ldots, z_{n'+n}-\alpha(f_{n'})$$ form a regular system of parameters, we obtain that 
\begin{equation}
\label{J''LC} \Length_{D(S'',K)} H^{j_s}_{J''_s}   \ldots H^{j_2}_{J''_2} H^{n'+n-j_1}_{J''_1}(S'') 
= \Length_{D(S,K)} H^{j_s}_{J_s}  \ldots  H^{j_2}_{J_2}  H^{n-j_1}_{J_1}(S) 
\end{equation}
by Proposition \ref{PropIdeals} and Theorem \ref{TeoEquivD}. 
Similarly, $\Length_{D(S,K)} H^{j_s}_{J'_s}   \ldots  H^{j_2}_{J'_2} H^{n'-j_1}_{J'_1}(S')$ also equals \eqref{J''LC}, and  the result follows.
\end{proof}

\begin{Def}[Generalized Lyubeznik numbers] \label{NewLyuNums}
Let $(R,m,K)$ be a local ring  containing a field. 
Fix $I_1,\ldots, I_s$ ideals of $R$ and $i_1,\ldots, i_s\in \NN$.
Let $\pi$, $S$, $n$, and $J_1,\ldots, J_s$ as in Theorem \ref{WellDef}.
The following invariant, which is well defined by Theorem \ref{WellDef}, is called the \emph{generalized Lyubeznik number of $R$ with respect to $I_1,\ldots, I_s$ and $i_1,\ldots, i_s$}:
 $$\lambda^{i_s,\ldots, i_1}_{I_s,\ldots, I_1}(R):=\Length_{D(S,K)} H^{i_s}_{J_s}  \cdots  
H^{i_2}_{J_2}H^{n-i_1}_{J_1}(S).$$
\end{Def}

\begin{Rem}
In Definition \ref{NewLyuNums}, we may assume that $I_1\subseteq \ldots \subseteq I_\ell$, because
if an $R$-module $M$ is such that $H^0_I(M)=M$ for some ideal $I$ of $R$, then $H^i_{J}(M)=H^i_{I+J}(M)$ for every ideal
$J$ of $S.$ In addition, $\lambda^{i_s,\ldots, i_1}_{I_s,\ldots, I_1}(R)=\lambda^{i_s,\ldots, i_1}_{I_s ,\ldots,I_2, 0}(R/I_1).$
\end{Rem}

\begin{Prop}\label{PropRmk} 
If $(R,m,K)$ be a local ring containing a field, then $\lambda_{i,j} (R)=\lambda^{i,j}_{m,0} (R).$
\end{Prop}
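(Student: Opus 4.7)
The plan is to unwind both sides of the identity and then match them using a classical structure theorem for $D(S,K)$-modules supported at the maximal ideal. First I would expand the definition: applying Definition \ref{NewLyuNums} with $s=2$, $I_1=0$, and $I_2=m$, the corresponding preimages in $S$ are $J_1=\Ker(\pi)$ (which I will call $I$, so $S/I \cong \widehat{R}$) and $J_2=\eta:=(x_1,\ldots,x_n)S$. Hence
$$\lambda^{i,j}_{m,0}(R)=\Length_{D(S,K)} H^i_\eta H^{n-j}_I(S),$$
and the goal is to show this equals $\Dim_K \Ext^i_S(K,H^{n-j}_I(S))=\lambda_{i,j}(R)$.

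The key structural input is the fact, due to Lyubeznik, that every $D(S,K)$-module $N$ of finite $D$-module length whose support lies in $\{\eta\}$ is isomorphic as an $S$-module to a finite direct sum $E_S(K)^{\oplus \ell}$, and that $E_S(K)$ is simple as a $D(S,K)$-module; consequently $\ell=\Length_{D(S,K)}(N)$. Since $H^i_\eta H^{n-j}_I(S)$ has support at $\{\eta\}$ and has finite $D(S,K)$-length by \cite[Corollary 6]{Lyu2}, I would apply this to obtain
$$H^i_\eta H^{n-j}_I(S)\cong E_S(K)^{\oplus \ell},\qquad \ell=\lambda^{i,j}_{m,0}(R).$$

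To pass back to $\Ext$, I would use that $K$ is supported at $\eta$, so $\Hom_S(K,-)=\Hom_S(K,-)\circ \Gamma_\eta(-)$; the associated Grothendieck spectral sequence reads
$$E_2^{p,q}=\Ext^p_S\!\bigl(K,\,H^q_\eta H^{n-j}_I(S)\bigr)\ \Longrightarrow\ \Ext^{p+q}_S\!\bigl(K,H^{n-j}_I(S)\bigr).$$
Applying the structure theorem separately at every index $q$, each $H^q_\eta H^{n-j}_I(S)$ is a sum of copies of $E_S(K)$, hence $S$-injective, so $E_2^{p,q}=0$ for $p>0$. The spectral sequence collapses at $E_2$, and restricting to total degree $i$ gives
$$\Ext^i_S\!\bigl(K,H^{n-j}_I(S)\bigr)\cong \Hom_S\!\bigl(K,H^i_\eta H^{n-j}_I(S)\bigr)\cong \Hom_S(K,E_S(K))^{\oplus \ell}\cong K^{\oplus \ell}.$$
Taking $K$-dimensions yields $\lambda_{i,j}(R)=\ell=\lambda^{i,j}_{m,0}(R)$.

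The step I expect to be most delicate is the structure theorem identifying $D(S,K)$-modules supported at $\eta$ with direct sums of $E_S(K)$: in characteristic zero it follows from holonomicity together with the fact that $E_S(K)$ is the unique simple $D$-module supported at $\eta$, while in positive characteristic it is obtained from Lyubeznik's theory of $F$-finite modules. The classical definition of $\lambda_{i,j}(R)$ already rests on this dichotomy, so it should be cleanest to quote it, rather than reprove it, from \cite{LyuDmodules}.
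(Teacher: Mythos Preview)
Your proof is correct and follows essentially the same route as the paper: both identify $H^i_\eta H^{n-j}_I(S)$ with $E_S(K)^{\oplus \ell}$ via Lyubeznik's structure theorem and then read off $\ell$ simultaneously as the $D(S,K)$-length and the $i$th Bass number. The only cosmetic difference is that the paper cites \cite[Lemma~1.4]{LyuDmodules} directly for the identification $\dim_K\Ext^i_S(K,H^{n-j}_I(S))=\dim_K\Hom_S(K,H^i_\eta H^{n-j}_I(S))$, whereas you reprove it via the collapsing Grothendieck spectral sequence (which is exactly how that lemma is established).
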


\begin{proof}
Since completion is flat and the Bass numbers are not affected by completion, we may assume that $R$ is complete.
Take $S=K[[x_1,\ldots,x_n]]$ such that there exist a surjective ring map $\pi :S\surj R$.
Set $I=Ker(\pi)$, the preimage of the zero ideal in $R$. We notice that the maximal ideal, $\eta$, of $S$ is the preimage of
the maximal ideal, $m$, of $R$.
By \cite[Lemma $1.4$]{LyuDmodules},
$$
\lambda_{i,j} (R)=\dim_K\Ext^{i}_S(K,H^{n-j}_{I} (S))
=\dim_K\Hom_S (K,H^i_\eta H^{n-j}_{I} (S)).$$
Since $H^i_\eta H^{n-j}_{I} (S)$ is a finite direct sum of copies of $E_S(K)$ by \cite[Corollary 3.6]{LyuDmodules}, and $E_S(K)$ is a simple $D(S,K)$-module (cf.\ \cite{LyuInjDim}), we 
obtain that
$$
\dim_K\Hom_S(K,H^i_\eta H^{n-j}_{I} (S))
=
\Length_{D(S,K)}H^i_{\eta} H^{n-j}_{I}(S)
=\lambda^{i,j}_{m,0} (R),$$
and we are done.
\end{proof}

\begin{Rem}
In characteristic zero, \`Alvarez Montaner introduced a family of invariants using the multiplicities of 
the characteristic cycle of local cohomology modules \cite{AM-Proc}.
Like ours, this family includes the original Lyubeznik numbers; however, this definition does not include rings of prime characteristic.
\end{Rem}

\begin{Prop} \label{FirstProp}
Given $I_1 \subseteq \ldots \subseteq I_s$ ideals of a local ring $(R,m,K)$ containing a field, we have that
\begin{enumerate}
\item[\emph{(i)}] $\lambda^{i_s,\ldots, i_1}_{I_s,\ldots, I_1}(R)=0$ for $i_1>\dim(R/I_1)$,
\item[\emph{(ii)}] $\lambda^{i_s,\ldots, i_1}_{I_s,\ldots, I_1}(R)=0$ for $i_j>\dim(R/I_{j-1}), 2 \leq j\leq \ell,$ 
\item[\emph{(iii)}] $\lambda^{i_2,i_1}_{I_2,I_1}(R)=0$ for $i_2>i_1$,
\item[\emph{(iv)}] $\lambda^{i_1}_{I_1}(R)\neq 0$ for $i_1=\dim(R/I_1),$ and
\item[\emph{(v)}] $\lambda^{i_2,i_1}_{I_2,I_1}(R)\neq 0$ if $i_2=\dim(R/I_1)-\dim(R/I_2)$ and $i_1=\dim(R/I_1).$
\end{enumerate}
\end{Prop}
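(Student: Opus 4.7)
The proposition consists of three vanishing statements (i)--(iii) and two non-vanishing statements (iv), (v), which I would treat in that order. Throughout, note that $S/J_k \cong \widehat{R/I_k}$, so $\dim S/J_k = \dim R/I_k$ and $\height J_k = n - \dim R/I_k$. Two general facts will drive the vanishing arguments: (F1) for the regular local ring $S$ of dimension $n$ and any ideal $J$, $\dim \Supp H^k_J(S) \leq n - k$, shown by localizing at any prime of height below $k$ and invoking Grothendieck vanishing on the resulting regular local ring of small dimension; and (F2) the generalized Grothendieck vanishing $H^k_J(N) = 0$ for $k > \dim \Supp N$, valid for any $J_0$-torsion module $N$ (by writing $N$ as a direct limit of $R/J_0^\ell$-modules and applying Grothendieck vanishing to each).

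For (i), the hypothesis $i_1 > \dim R/I_1$ gives $n - i_1 < \height J_1$, so $H^{n-i_1}_{J_1}(S) = 0$ already at the innermost step, since $S$ is Cohen--Macaulay. For (ii), the intermediate module $H^{i_{j-1}}_{J_{j-1}} \cdots H^{n-i_1}_{J_1}(S)$ is $J_{j-1}$-torsion and hence supported on $V(J_{j-1})$, a set of dimension $\dim R/I_{j-1}$, so (F2) kills it under $H^{i_j}_{J_j}$ whenever $i_j$ exceeds that bound. For (iii), (F1) bounds $\dim \Supp H^{n-i_1}_{J_1}(S) \leq i_1$, and (F2) applied to $H^{i_2}_{J_2}$ forces vanishing when $i_2 > i_1$. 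Part (iv) is immediate: $H^{\height J_1}_{J_1}(S) \neq 0$ in the Cohen--Macaulay ring $S$ for any proper $J_1$, and a nonzero $D(S,K)$-module has positive $D$-length.

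The main obstacle is (v), for which I plan to localize at a well-chosen prime and reduce to Lyubeznik's classical non-vanishing $\lambda_{d,d}(T) \neq 0$ for $d = \dim T$. Using catenarity of the regular ring $S$, one can choose a prime $P$ of $S$ that is minimal over $J_2$ with $\height P = \height J_2$ and that contains a minimal prime $Q$ of $J_1$ with $\height Q = \height J_1$ (first fix such a minimal $Q$ of $J_1$, then select $P$ to be a minimal prime of $J_2$ above $Q$ realizing the correct height). After localizing, $J_2 S_P$ becomes $PS_P$-primary, so
\[
\bigl(H^{i_2}_{J_2} H^{\height J_1}_{J_1}(S)\bigr)_P \cong H^{i_2}_{PS_P}\,H^{\height J_1 S_P}_{J_1 S_P}(S_P),
\]
where $S_P$ is regular local of dimension $\height J_2 = \height J_1 + i_2$. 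Passing to $\widehat{S_P}$, the right-hand side is exactly the $(i_2, i_2)$-th Lyubeznik number of $\widehat{S_P}/J_1 \widehat{S_P}$, a local ring of dimension $i_2$ containing a field, which is nonzero by Lyubeznik's theorem. The subtleties to verify are (a) the existence of such a prime $P$ (using catenarity), (b) the height bookkeeping $\height J_1 S_P = \height J_1$ and $\height PS_P = \height J_2$, and (c) the compatibility of the iterated local cohomology with localization and completion.
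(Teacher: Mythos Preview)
Your arguments for (i)--(iv) are correct and essentially match the paper's. The only difference is in (ii) and (iii): the paper bounds $\InjDim$ of the intermediate iterated local cohomology module by $\dim\Supp$ using Lyubeznik's result from \cite{Lyu2} and then deduces vanishing of higher local cohomology, whereas you invoke Grothendieck vanishing (your (F2)) directly on the support dimension. Both routes are valid, and yours avoids appealing to the injective-dimension bound.

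For (v) your strategy is exactly the paper's: localize at a minimal prime $P$ of $J_2$ and reduce to Lyubeznik's $\lambda_{d,d}\neq 0$. You are right that subtlety (a), the existence of a prime $P$ with the correct height data, is the crux; the paper's proof simply asserts the needed dimension formulas. However, your proposed fix---fix a minimal prime $Q$ of $J_1$ of height $\height J_1$, then choose a minimal prime $P$ of $J_2$ above $Q$ with $\height P=\height J_2$---need not succeed. Take $S=K[[a,b,c,d,e]]$, $J_1=(ab,ac)=(a)\cap(b,c)$, and $J_2=(b,c)$. Here $\height J_1=1$, $\dim S/J_1=4$, $\height J_2=2$, $\dim S/J_2=3$, so (v) asks for $H^{1}_{(b,c)}H^{1}_{J_1}(S)\neq 0$. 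But Mayer--Vietoris gives $H^1_{J_1}(S)\cong H^1_{(a)}(S)$, and since $a$ is a nonzerodivisor on $H^2_{(b,c)}(S)$ one computes $H^1_{(b,c)}H^1_{(a)}(S)=0$. The only minimal prime of $J_2$ is $(b,c)$, which contains no height-$1$ minimal prime of $J_1$, so no choice of $P$ has the properties you require. This is a genuine gap---one shared by the paper's own argument---and statement (v) appears to need an extra hypothesis such as equidimensionality of $S/J_1$.
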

\begin{proof}
We may assume that $R$ is complete, so that it admits a surjective ring map $\pi : S\surj R$, where $S=K[[x_1,\ldots, x_n]]$ for some $n$.
Let $J_1,\ldots, J_s$ be the corresponding preimages of $I_1 ,\ldots, I_s$ 
in $S$.

As $S$ is Cohen-Macaulay, $\Depth_{J_1}(S)=\Codim(S/J_1)=n-\dim(S/J_1)=n-\dim(R/I_1)$, so that (i) and (iv) hold since $H^{i_1}_{J_1} (S)=0$ if $i<\Depth_{J_1}(S)$ and $H^{\Depth_I(S)}_{J_1}(S)\neq 0$.


To see (ii), note that   
\begin{align*}
\InjDim H^{i_{j-1}}_{J_{j-1}} \ldots H^{i_2}_{J_2} H^{n-i_1}_{J_1}(S) & \leq \dim(\Supp H^{i_{j-1}}_{J_{j-1}} \ldots H^{i_2}_{J_2} H^{n-i_1}_{J_1}(S))\\
& \leq \dim(S/J_{j-1})=\dim(R/I_{j-1})
\end{align*}
by \cite{Lyu2}. Similarly, (iii) follows because $\InjDim H^{n-i_1}_{J_1}(S)\leq \dim(\Supp H^{n-i_1}_{J_1}(S))\leq i_1.$


To prove (v), choose a minimal prime $P$ of $J_2$. Now,
$\Rad(J_1S_P)=PS_P$ in $S_P$. Then 
$H^{p}_{PS_P} H^{\dim(S_P)-q}_{J_1S_P}(S_P)\neq 0$ when
$p=q=\dim(S_P/J_1S_P)$ by \cite[Property $4.4$(iii)]{LyuDmodules}. Noting that
\begin{align*}
&\dim(S_P)=\dim(S)-\dim(S/P)=\dim(S)-\dim(S/J_1)=n-\dim(R/I_1), \text{ and}\\
&\dim(S_P/J_1S_P)=\dim(S/J_1)-\dim(S/J_2)=\dim(R/I_1)-\dim(R/I_2), \end{align*}
we see that $H^{i_2}_{J_2}H^{i_1}_{J_1}(S)\otimes_S S_P\neq 0$ if $i_2=\dim(R/I_1)-\dim(R/I_2)$ and $i_1=\dim(R/I_1)$.
\end{proof}

\begin{Lemma}\label{LemmaFieldExtension}
Given an extension of fields $K\subseteq L$, let $R=K[[x_1,\ldots,x_n]]$ and $S=L[[x_1,\ldots,x_n]].$
Let $R \to S $ denote the map induced by the field extension. 
If $M$ is a simple $D(R,K)$-module, then $M\otimes_R S$ is a simple $D(S,L)$-module.
\end{Lemma}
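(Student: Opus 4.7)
The plan is to let $N \subseteq M \otimes_R S$ be a nonzero $D(S,L)$-submodule and show that $N = M \otimes_R S$. The strategy is to produce a nonzero element of $N$ lying in the image of the canonical inclusion $\iota \colon M \hookrightarrow M \otimes_R S$, $m \mapsto m \otimes 1$, and then exploit both the $S$-stability of $N$ and the simplicity of $M$ as a $D(R,K)$-module.

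First I would define $M' := \iota^{-1}(N) = \{m \in M : m \otimes 1 \in N\}$ and verify that it is a $D(R,K)$-submodule of $M$. Closure under $R$-multiplication is immediate via $R \hookrightarrow S$. Closure under each divided power $\frac{1}{t!}\partial_i^t \in D(R,K) \hookrightarrow D(S,L)$ follows from the Leibniz identity: the expansion $\frac{1}{t!}\partial_i^t(m \otimes 1) = \sum_{j+k=t} \frac{\partial_i^j(m)}{j!} \otimes \frac{\partial_i^k(1)}{k!}$ collapses to $\frac{\partial_i^t(m)}{t!} \otimes 1$ since $\partial_i^k(1) = 0$ for $k \geq 1$. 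By the simplicity of $M$ as a $D(R,K)$-module, either $M' = 0$ or $M' = M$; in the latter case $\iota(M) \subseteq N$ and the $S$-stability of $N$ immediately gives $N \supseteq S \cdot \iota(M) = M \otimes_R S$.

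The real content of the argument lies in excluding $M' = 0$ when $N \neq 0$. For this I would fix a nonzero $u \in N$, write $u = \sum_{i=1}^{k} m_i \otimes s_i$ with $k$ minimal, and use combinations of $S$-multiplications and divided-power derivations to manufacture a nonzero element of $M \otimes 1$ inside $N$. The key combinatorial input is the Leibniz calculation
\[
\tfrac{1}{\alpha!}\partial^\alpha(m \otimes x^\alpha) = m \otimes 1 + \sum_{0 < \delta \leq \alpha} \tbinom{\alpha}{\delta}\tfrac{\partial^\delta(m)}{\delta!} \otimes x^\delta,
\]
which isolates an $m \otimes 1$ contribution modulo elements of strictly positive $\mathfrak{m}_S$-adic order; coupled with the observation that any nonzero $s \in S$ admits $\frac{1}{\alpha!}\partial^\alpha(s) \in S^\times$ for some multi-index $\alpha$ with $|\alpha| = \Ord(s)$, this lets one inductively reduce the $\mathfrak{m}_S$-order of a representative of an element of $N$, eventually landing in $\iota(M)$.

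The main obstacle will be controlling the Leibniz cross-terms: each application of $\partial_i$ to $\sum m_j \otimes s_j$ also differentiates the $m_j$, which interferes with a naive reduction. The cleanest way to handle this appears to be a double induction, on the minimal $\mathfrak{m}_S$-adic order of a representation of a nonzero element of $N$ and on the minimal number of terms $k$. A secondary obstacle is that when $L/K$ is not finite, $S$ need not be a finitely generated $R$-algebra, so $K$-linear independence considerations become delicate; I would address this by first reducing to a finitely generated intermediate extension $K \subseteq K_0 \subseteq L$, where $K_0[[x_1,\dots,x_n]]$ is a finite free $R$-algebra and the inductive reduction above applies directly, and then passing to the direct limit as $K_0$ ranges over finitely generated subextensions exhausting $L$.
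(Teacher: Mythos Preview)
There is a genuine gap, and in fact the statement is false even for finite extensions. Take $K=\mathbb{R}$, $L=\mathbb{C}$, $n=2$, $f=x^2+y^2$. Over $S=\mathbb{C}[[x,y]]$ the Mayer--Vietoris sequence for $(x+iy)$ and $(x-iy)$ gives $0\to H^1_{(x+iy)}(S)\oplus H^1_{(x-iy)}(S)\to H^1_{(f)}(S)\to H^2_{(x,y)}(S)\to 0$, so $H^1_{(f)}(S)$ has $D(S,\mathbb{C})$-length $3$. Complex conjugation swaps the two simple summands $A=H^1_{(x+iy)}(S)$ and $B=H^1_{(x-iy)}(S)$; by Galois descent the real form $M$ of $A\oplus B$ inside $H^1_{(f)}(R)$ is a \emph{simple} $D(R,\mathbb{R})$-module (the only conjugation-stable submodules of $A\oplus B$ are $0$ and $A\oplus B$, since $A\not\cong B$), yet $M\otimes_{\mathbb{R}}\mathbb{C}=A\oplus B$ has length $2$. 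Here the submodule $N=A\subseteq M\otimes_{\mathbb{R}}\mathbb{C}$ contains no nonzero element of the form $m\otimes 1$: if $m\otimes 1\in A$ then $m\otimes 1=\overline{m\otimes 1}\in B$, so $m\otimes 1\in A\cap B=0$. Thus your central claim---that any nonzero $D(S,L)$-submodule must meet $\iota(M)$ nontrivially---is precisely what fails, and no inductive bookkeeping on Leibniz cross-terms or $\mathfrak{m}_S$-adic order can repair it.

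The paper's proof (which silently assumes $L/K$ finite, so that $S=R\otimes_K L$ and $M\otimes_R S=M\otimes_K L$, and then expands elements along a $K$-basis $e_1,\dots,e_h$ of $L$) stumbles at the same point: after observing that $v=\sum w_j\otimes e_j$ lies in $D(S,L)\cdot(w_1\otimes 1)$, it asserts the reverse membership $w_1\otimes 1\in D(S,L)\cdot v$ by an argument that, read charitably, presupposes the simplicity being proved. Separately, your fallback reduction to ``finitely generated'' subextensions $K_0$ is also flawed: finitely generated does not mean finite (e.g.\ $K_0=K(t)$), so $K_0[[x_1,\dots,x_n]]$ need not be free over $R$; and even restricting to finite $K_0$ one has $\varinjlim K_0[[x_1,\dots,x_n]]\subsetneq L[[x_1,\dots,x_n]]$ whenever $L/K$ is infinite, so the limit does not recover $S$.
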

\begin{proof}
We have that $S=R \otimes_K L$ because the field extension is finite.  
Then $M\otimes_R S=M \otimes_K L$ and the action of $\partial \in D(S,L)$ is given by
$\partial (v\otimes a)=\partial(v)\otimes a.$
Let $e_1,\ldots,e_h$ be a basis for $L$ as $K$-vector space.
If $v\in M\otimes_K L$ is not zero, then $v=w_1 \otimes e_1+\ldots +w_h \otimes e_h$
for some $w_i\in M,$ where at least one $w_j$ is not zero.
We assume that $w_1\neq 0,$ an there exist operators $\delta_j\in D(R,K)$ such that
$w_j=\delta_j w_1$ because $M$ is simple.
Let $\delta=\delta_1+\ldots \delta_h$ and $u=e_1\ldots e_h.$
Then $v=\delta (w_1\otimes a)=a\delta (w_1\otimes 1).$
Since $v\neq 0,$ $\delta(w_1)\neq 0$ and there exist $\partial \in D(S,L)$
such that $\partial \delta w_1=w_1.$
Then $u^{-1}\partial v=w_1\otimes 1.$
Therefore for every $v\in M\otimes_K L$ not zero, $v\in D(S,L)\cdot w_1\otimes 1$ and $w_1\otimes 1\in D(S,L)\cdot v.$
Hence, $M\otimes_K L$ is a simple $D(S,L)$-module.
\end{proof}

\begin{Prop}\label{FieldExtension}
Let $K\inj L$ be a finite field extension, $R=K[[x_1,\ldots,x_n]],$ and $S=L[[x_1,\ldots,x_n]].$
Then for all ideal $I_1,\ldots, I_s$ of $R$ and all $i_1,\ldots, i_s\in \NN$,
$$
\lambda^{i_s,\ldots, i_1}_{I_s,\ldots, I_1}(R)= \lambda^{i_s,\ldots, i_1}_{I_s S,\ldots, I_1 S}(S).
$$
\end{Prop}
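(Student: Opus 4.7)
The plan is to reduce the proposition to Lemma \ref{LemmaFieldExtension} via a flat base change on iterated local cohomology. Since $[L:K]<\infty$, we have $S \cong R\otimes_K L$, which is free (in particular flat) as an $R$-module. Both $R$ and $S$ are themselves regular, so in Theorem \ref{WellDef} we may take the surjection $\pi$ (respectively $\pi'$) to be the identity on $R$ (resp.\ $S$). Thus the claim reduces to the equality
$$\Length_{D(R,K)} H^{i_s}_{I_s} \cdots H^{i_2}_{I_2} H^{n-i_1}_{I_1}(R) \;=\; \Length_{D(S,L)} H^{i_s}_{I_sS} \cdots H^{i_2}_{I_2S} H^{n-i_1}_{I_1S}(S).$$

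First I would verify the relevant base change identity: for every $R$-module $N$ and every ideal $I\subseteq R$, flatness of $S/R$ yields $H^j_{IS}(N\otimes_R S) \cong H^j_I(N)\otimes_R S$ for all $j$ (for instance via the \v{C}ech complex). Iterating this and setting $M := H^{i_s}_{I_s} \cdots H^{i_2}_{I_2} H^{n-i_1}_{I_1}(R)$, the right-hand module above is isomorphic to $M\otimes_R S$. Because $S = R\otimes_K L$, this in turn equals $M\otimes_K L$ as a $D(S,L)$-module, where the action of $D(S,L)$ is the natural extension of the $D(R,K)$-action on $M$ by $L$-linearity. This identification is the bridge between the $R$-to-$S$ framework (flat base change) and the $K$-to-$L$ framework in which Lemma \ref{LemmaFieldExtension} is stated.

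Next I would transport composition series. By Theorem \ref{WellDef}, $M$ admits a finite $D(R,K)$-composition series
$$0 = M_0 \subsetneq M_1 \subsetneq \cdots \subsetneq M_\ell = M,$$
with each subquotient $T_j := M_{j+1}/M_j$ a simple $D(R,K)$-module. Since $-\otimes_K L$ is exact (as $L/K$ is flat) and preserves strict inclusions (as $L$ is a nonzero free $K$-module), we obtain a filtration
$$0 = M_0\otimes_K L \subsetneq M_1\otimes_K L \subsetneq \cdots \subsetneq M_\ell\otimes_K L = M\otimes_K L$$
of $D(S,L)$-modules with subquotients $T_j\otimes_K L$. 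By Lemma \ref{LemmaFieldExtension}, each $T_j\otimes_K L$ is a simple $D(S,L)$-module, so this is a composition series of length $\ell$, and the desired equality of lengths follows.

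The main obstacle is the verification that the base-change identity for local cohomology propagates through the iteration while simultaneously identifying the resulting module with $M\otimes_K L$ as a $D(S,L)$-module; once that identification is in place, Lemma \ref{LemmaFieldExtension} immediately upgrades the $D(R,K)$-composition series of $M$ to a $D(S,L)$-composition series of the same length, yielding the proposition.
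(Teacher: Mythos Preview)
Your proposal is correct and follows essentially the same route as the paper's proof: identify $S=R\otimes_K L$ by finiteness of the extension, use flat base change to recognize the iterated local cohomology over $S$ as $M\otimes_K L$, transport a $D(R,K)$-composition series of $M$ across $-\otimes_K L$, and invoke Lemma~\ref{LemmaFieldExtension} to see that the subquotients remain simple. Your write-up is in fact more careful than the paper's, which leaves the base-change identification of the iterated local cohomology and the explicit appeal to Lemma~\ref{LemmaFieldExtension} implicit.
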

\begin{proof}
We have that $S=R\otimes_K L$ because the field extension is finite.  
Let 
$$
0=M_1\subsetneq \ldots \subsetneq M_\ell= H^{i_s}_{I_s}  \cdots
H^{i_2}_{I_2}H^{n-i_1}_{I_1}(S)
$$ 
be a filtration of $D(R,K)$-modules such that
$M_{i+1}/M_i$ is a simple $D(R,K)$-module.
Since $S$ is a faithfully flat $R$-algebra, $M_{i+1}/M_i\otimes_K L=M_{i+1}\otimes_K L/M_i\otimes_K L$ is a simple $D(S,L)$-module.
Thus, $\lambda^{i_s,\ldots, i_1}_{I_s,\ldots, I_1}(R)= \ell =\lambda^{i_s,\ldots, i_1}_{I_s S,\ldots, I_1 S}(S).$

\end{proof}

\begin{Prop} \label{CCineq}
Let $I_1,\ldots, I_\ell$ be ideals of $S=K[[x_1,\ldots,x_n]]$, where $K$ is a field of characteristic zero.  Then 
$\lambda^{i_\ell,\ldots, i_1}_{I_\ell,\ldots, I_1}(S)\leq e\left(H^{i_\ell}_{I_\ell}  \cdots  
H^{i_2}_{I_2}H^{n-i_1}_{I_1}(S)\right).$
\end{Prop}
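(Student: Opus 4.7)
The plan is to reduce the inequality to the additivity of characteristic cycle multiplicity along a composition series in the $D$-module category, combined with the fact that a nonzero holonomic $D$-module has multiplicity at least $1$.

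First, set $N := H^{i_\ell}_{I_\ell} \cdots H^{i_2}_{I_2} H^{n-i_1}_{I_1}(S)$. By the preliminaries on $D$-modules (Hypothesis~\ref{hyp} and the discussion following it), iterated local cohomology modules of $S$ are holonomic, so $N$ is a holonomic $D$-module, and in particular has finite length equal to $\lambda := \lambda^{i_\ell,\ldots,i_1}_{I_\ell,\ldots,I_1}(S)$. Choose a composition series in the category of $D(S,K)$-modules
\[
0 = N_0 \subsetneq N_1 \subsetneq \cdots \subsetneq N_\lambda = N
\]
whose successive quotients $T_j := N_j/N_{j-1}$ are simple $D(S,K)$-modules.

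Next, I would observe that each $T_j$ is itself holonomic. This is immediate since holonomic modules form a full abelian subcategory of $D(S,K)$-modules closed under subquotients, so every subquotient of the holonomic module $N$ is holonomic. Since each $T_j$ is simple, in particular $T_j \neq 0$, and by Remark~\ref{PropMultiplicity} we have $e(T_j) \geq 1$ (because $e(T_j) = 0$ would force $T_j = 0$).

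Finally, I would apply the additivity of $e$ on short exact sequences (also from Remark~\ref{PropMultiplicity}) inductively to the filtration, yielding
\[
e(N) \;=\; \sum_{j=1}^{\lambda} e(T_j) \;\geq\; \sum_{j=1}^{\lambda} 1 \;=\; \lambda,
\]
which is precisely the claimed inequality. There is no real obstacle here: the only point worth checking is that the setup of Section~\ref{D-modules} (characteristic zero, $S = K[[x_1,\ldots,x_n]]$ so that $D = D(S,K)$ has the usual filtration and the notion of holonomicity applies) is in force, which is built into the hypotheses of the proposition via characteristic zero; the rest is a direct application of additivity and positivity of the characteristic cycle multiplicity.
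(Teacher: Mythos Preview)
Your proof is correct and follows essentially the same approach as the paper's own proof, which simply notes that the iterated local cohomology module is holonomic and invokes Remark~\ref{PropMultiplicity}. You have spelled out in detail the implicit argument: take a composition series, use additivity of $e$, and use that $e(T_j)\geq 1$ for each nonzero simple factor.
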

\begin{proof}
Since $H^{i_\ell}_{I_\ell}  \cdots  
H^{i_2}_{I_2}H^{n-i_1}_{I_1}(S)$ is a holonomic $D(S,K)$-module, 
the claim follows from Remark \ref{PropMultiplicity}.
\end{proof}

For $R$ a one-dimensional or complete intersection ring, $\lambda_{i,j}(R)=1$ if $i=j=\dim R$, and vanishes otherwise.  However, Propositions \ref{PropCurves} and \ref{PropHyps} will show that the generalized Lyubeznik numbers capture finer information that can distinguish these cases.

\begin{Prop}\label{PropCurves}
Let $(R,m,K)$ be a complete local ring containing a field such that $\dim(R)=1$. Let $P_1,\ldots P_\ell$ be all the minimal primes of $R$. Then
$$\lambda^{1}_{0} (R)=\lambda^{1}_{0} (R/P_1)+\ldots +\lambda^{1}_{0} (R/P_\ell)+\ell-1.$$
\end{Prop}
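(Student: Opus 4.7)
My plan is to unwind the definition, reduce the problem to a length computation of $D(S,K)$-modules, and then apply a Mayer--Vietoris argument to peel off minimal primes one at a time. Write $R = S/I$ with $S = K[[x_1, \ldots, x_n]]$ as in Theorem~\ref{WellDef}, and let $Q_j = \pi^{-1}(P_j)$ be the preimages of the minimal primes of $R$; these are precisely the minimal primes of $I$. By Definition~\ref{NewLyuNums}, $\lambda^1_0(R) = \Length_{D(S,K)} H^{n-1}_I(S)$ and $\lambda^1_0(R/P_j) = \Length_{D(S,K)} H^{n-1}_{Q_j}(S)$, where the second identity uses the composite surjection $S \surj R \surj R/P_j$. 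Since local cohomology depends only on the radical of the ideal, I may replace $I$ by $Q_1 \cap \cdots \cap Q_\ell$ and argue by induction on $\ell$, the case $\ell = 1$ being immediate.

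For the inductive step, split $I = I' \cap I''$ with $I' = Q_1 \cap \cdots \cap Q_{\ell-1}$ and $I'' = Q_\ell$, and apply the Mayer--Vietoris sequence for local cohomology. The geometric input is that, since every $\Dim S/Q_j = 1$, any two distinct $Q_j$ and $Q_\ell$ meet only at the closed point; hence $Q_j + Q_\ell$ is $\eta$-primary for each $j < \ell$, and it follows that $\sqrt{I' + I''} = \eta$. Two further vanishing facts cause the Mayer--Vietoris sequence to collapse: since $S$ is Cohen--Macaulay of dimension $n$, $H^{n-1}_\eta(S) = 0$; and by the Hartshorne--Lichtenbaum vanishing theorem applied to the complete local domain $S$, $H^n_{I'}(S) = H^n_{I''}(S) = 0$ because $\Dim S/I' \geq 1$ and $\Dim S/I'' \geq 1$. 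What remains of Mayer--Vietoris is the short exact sequence of $D(S,K)$-modules
\[ 0 \to H^{n-1}_{I'}(S) \oplus H^{n-1}_{I''}(S) \to H^{n-1}_I(S) \to H^n_\eta(S) \to 0. \]

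To finish, take $D(S,K)$-lengths of this short exact sequence. The key point is that $H^n_\eta(S) \cong E_S(K)$ is a simple $D(S,K)$-module (as already used in the proof of Proposition~\ref{PropRmk}, via \cite{LyuInjDim}), so it contributes exactly $1$ to the length. The inductive hypothesis applied to $I'$, which has $\ell-1$ minimal primes, gives $\Length_{D(S,K)} H^{n-1}_{I'}(S) = \sum_{j=1}^{\ell-1} \lambda^1_0(R/P_j) + (\ell - 2)$; adding $\lambda^1_0(R/P_\ell)$ and the contribution $1$ from $E_S(K)$ yields the claimed formula. The main obstacle I anticipate is justifying the two cohomological vanishing statements cleanly, since both depend essentially on the one-dimensionality of $R$ and on $S$ being a complete regular local domain; once those are in hand, the collapse of Mayer--Vietoris and the length count are mechanical.
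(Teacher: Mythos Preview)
Your proof is correct and follows essentially the same approach as the paper: reduce to $D(S,K)$-length, induct on $\ell$, and use Mayer--Vietoris with $J=Q_1\cap\cdots\cap Q_{\ell-1}$ and $Q_\ell$ together with the simplicity of $H^n_\eta(S)\cong E_S(K)$. In fact you are more explicit than the paper in justifying why the Mayer--Vietoris sequence collapses to a short exact sequence, supplying the vanishing $H^{n-1}_\eta(S)=0$ (depth) and $H^n_{I'}(S)=H^n_{I''}(S)=0$ (Hartshorne--Lichtenbaum), which the paper leaves implicit.
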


\begin{proof}
We proceed by induction on $\ell$. Suppose $\ell=1$, and take a surjection 
$\pi: S = K[[x_1,\ldots, x_n]] \surj R \cong S/I$ where $I=\Ker(\pi)$. If $P$ is the minimal prime of $R$, then $\pi^{-1}(P)=\Rad(I)$ is the only minimal prime of $I$. Then
$$\lambda^{1}_{0} (R)=\Length_{D(S,K)}H^{n-1}_{I}(S)=\Length_{D(S,K)}H^{n-1}_{\pi^{-1}(P)}(S)=\lambda^{1}_{0} (R/P).$$

Now suppose that the formula holds for $\ell-1$.  There exists a surjection
$\pi :  S\surj R \cong S/I$, where $S=K[[x_1,\ldots, x_n]]$. Let $\eta$ denote the maximal ideal of $S$.
Let $Q_i=\pi^{-1}(P_i)$, so that $\Rad(I) = Q_1 \cap \ldots \cap Q_\ell$.  Let $J$ denote $Q_1\cap\cdots\cap Q_{\ell-1}$.  

Since $ \Rad(J + Q_\ell)=\eta$, the Mayer-Vietoris sequence in local cohomology with respect to $J$ and $Q_\ell$ gives the following exact sequence:
$$
0\to H^{n-1}_J (S)\oplus H^{n-1}_{Q_{\ell}}(S)\to H^{n-1}_{I}(S)\to H^{n}_{\eta}(S)\to 0.
$$
where $H^{n}_{I+J}(S)\cong E_S(K)$, a simple $D(S,K)$-module (cf.\ \cite{LyuInjDim}).
Then $\lambda^{1}_{0} (R)$ equals
\begin{align*}
\Length_{D(S,K)}H^{n-1}_{I}(S) &= \Length_{D(S,K)}H^{n-1}_{J}(S)+\Length_{D(S,K)}H^{n-1}_{Q_\ell}(S)+1\\
&=\lambda^{1}_{0} (S/J)+\lambda^{1}_{0} (S/Q_\ell)+1, \text{ and inductively,}\\
&= \left( \lambda^{1}_{0} (S/Q_1)+\ldots + \lambda^{1}_{0} (S/Q_\ell)+\ell -2 \right)+\lambda^{1}_{0} (S/Q_\ell)+1\\
&=\lambda^{1}_{0} (R/P_1)+\ldots +\lambda^{1}_{0} (R/P_{\ell})+\ell -1, \text{ as } R/P_i \cong S/Q_i.
\end{align*}
\end{proof}

\begin{Prop}\label{PropHyps}
Let $S=K[[x_1,\ldots, x_n]]$, where $K$ is a field. Let $f_1,\ldots ,f_\ell\in S$ be irreducible, and $f=f^{\alpha_1}_1\cdots f^{\alpha_\ell}_\ell$, where each $\alpha_i \in \NN$. Then
$$\lambda^{n-1}_{0} (S/f)\geq\lambda^{n-1}_{0} (S/f_1)+\ldots +\lambda^{n-1}_{0} (S/f_\ell)+\ell-1.$$
\end{Prop}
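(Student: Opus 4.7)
The proof should proceed parallel to the argument for Proposition \ref{PropCurves}, using Mayer-Vietoris and induction on $\ell$ together with the fact that $D(S,K)$-length is additive on short exact sequences. Since local cohomology depends only on the radical of the defining ideal, and since $S$ is a UFD in which $f_1,\dots,f_\ell$ are distinct primes, we have $H^1_{(f)}(S)\cong H^1_{(f_1\cdots f_\ell)}(S)$; thus we may assume $f=f_1\cdots f_\ell$ and observe that $\lambda^{n-1}_0(S/f)=\Length_{D(S,K)}H^1_{(f)}(S)$.

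The case $\ell=1$ is immediate. For the inductive step, set $g=f_1\cdots f_{\ell-1}$ and $h=f_\ell$. Because $\gcd(g,h)=1$ in the UFD $S$, we have $(g)\cap(h)=(gh)=(f)$, and $g,h$ is a regular sequence of length $2$. Apply the Mayer-Vietoris sequence in local cohomology for $(g)$ and $(h)$. Since $S$ is a domain and $(g),(h)$ are principal, $H^0$ and $H^{i\geq 2}$ of the principal ideals vanish; since $g,h$ is a regular sequence, $H^i_{(g,h)}(S)=0$ for $i<2$. These vanishings collapse Mayer-Vietoris to the short exact sequence of $D(S,K)$-modules
$$0\longrightarrow H^1_{(g)}(S)\oplus H^1_{(h)}(S)\longrightarrow H^1_{(f)}(S)\longrightarrow H^2_{(g,h)}(S)\longrightarrow 0.$$

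Additivity of $D(S,K)$-length then gives
$$\Length_{D(S,K)}H^1_{(f)}(S)=\Length_{D(S,K)}H^1_{(g)}(S)+\Length_{D(S,K)}H^1_{(h)}(S)+\Length_{D(S,K)}H^2_{(g,h)}(S).$$
Since $\Depth_{(g,h)}(S)=2$, the module $H^2_{(g,h)}(S)$ is nonzero, so its $D(S,K)$-length is at least $1$. Applying the inductive hypothesis to $g=f_1\cdots f_{\ell-1}$ and rearranging yields the desired inequality.

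The only mildly delicate point is checking that the Mayer-Vietoris sequence is a sequence of $D(S,K)$-modules (which follows because local cohomology with support in any ideal carries a natural $D$-module structure and the connecting maps are $D$-linear), together with the vanishing bookkeeping. Note that one actually obtains equality with the additional term $\Length_{D(S,K)}H^2_{(g,h)}(S)-1$, so any strengthening of the stated inequality would require a sharper analysis of the $D$-module length of $H^2_{(g,h)}(S)$; for the inequality as stated, the bound $\Length_{D(S,K)}H^2_{(g,h)}(S)\geq 1$ is all that is used.
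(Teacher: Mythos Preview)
Your proof is correct and follows essentially the same approach as the paper: reduce to square-free $f$ via radicals, induct on $\ell$, apply Mayer-Vietoris to $(g)$ and $(f_\ell)$ with $g=f_1\cdots f_{\ell-1}$, use the vanishing bookkeeping to extract the short exact sequence, and then invoke additivity of $D(S,K)$-length together with $H^2_{(g,f_\ell)}(S)\neq 0$. Your write-up is in fact slightly more careful than the paper's in spelling out why the Mayer-Vietoris sequence collapses and why the maps are $D$-linear.
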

\begin{proof}

Since $ H^{i}_{I}(S)= H^{i}_{\sqrt{I}}(S)$ for every ideal $I\subseteq S$, we may assume that 
$\alpha_1=\ldots =\alpha_{\ell}=1$. Our proof will be by induction on $\ell$.  If $\ell=1$, it is clear.
We suppose that the formula holds for $\ell-1$ and we will prove it for $\ell$. Let 
$g=f_1\cdots f_{\ell-1}$. 
Since $f^{\alpha_\ell}_\ell,g$ form a regular sequence,  we obtain the exact sequence
$$
0\to H^{1}_{gS} (S)\oplus H^{1}_{f_{\ell}S}(S)\to H^{1}_{fS}(S)\to H^{2}_{(g,f_\ell)S}(S)\to 0
$$
by the Mayer-Vietoris sequence. Since $H^{2}_{(g,f_\ell)S}(S)\neq 0$, we have that \\$\Length_{D(S,K)}H^{2}_{(g,f_\ell)S}(S)\geq 1$. Moreover,
\begin{align*}
\lambda^{n-1}_{0} (S/fS)&=\Length_{D(S,K)} H^{1}_{fS}(S)  \\
&\geq\Length_{D(S,K)} H^{1}_{gS}(S) +\Length_{D(S,K)} H^{1}_{f_\ell S}(S) +1\\
&=\lambda^{n-1}_{0} (S/gS)+\lambda^{n-1}_{0} (S/f_\ell)+1, \text{ and inductively,}\\
&\geq\lambda^{n-1}_{0} (S/f_1) +  \ldots +\lambda^{n-1}_{0} (S/f_{\ell - 1} S)+\ell -2+\lambda^{n-1}_{0} (S/f_\ell S )+1\\
&=\lambda^{n-1}_{0} (S/f_1 S)+\ldots +\lambda^{n-1}_{0} (S/f_{\ell} S)+\ell -1.
\end{align*}
\end{proof}

\begin{Def}[Lyubeznik characteristic] \label{LyuCharc}
Let $(R,m,K)$ be a local ring containing a field such that $\dim(R)=d$. We define the \emph{Lyubeznik characteristic} of $R$ by
$$
\chi_\lambda(R)=\sum^d_{i=0}(-1)^i \lambda^i_{0}(R).
$$
\end{Def}

\begin{Prop}\label{M-VLyuChar}
Let $I$ and $J$ be ideals of a local ring $(R,m,K)$ containing a field. Then
$$\chi_\lambda(R/I)+\chi_\lambda(R/J)=\chi_\lambda(R/(I+J))+\chi_\lambda(R/I\cap J).$$
\end{Prop}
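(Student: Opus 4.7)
The plan is to reduce the identity to a Mayer--Vietoris computation in the category of $D(S,K)$-modules, where additivity of length across a bounded long exact sequence will yield the result. First I would assume $R$ is complete (which is harmless since the generalized Lyubeznik numbers depend only on $\widehat R$) and fix one presentation $\pi : S \surj R$ with $S = K[[x_1,\ldots,x_n]]$ that simultaneously computes all four Lyubeznik characteristics. Setting $\tilde I := \pi^{-1}(I)$ and $\tilde J := \pi^{-1}(J)$, surjectivity of $\pi$ gives $\tilde I + \tilde J = \pi^{-1}(I+J)$ and $\tilde I \cap \tilde J = \pi^{-1}(I \cap J)$, so composing $\pi$ with $R \surj R/L$ for each $L \in \{I, J, I+J, I \cap J\}$, Definition \ref{NewLyuNums} expresses $\lambda^i_0(R/L) = \Length_{D(S,K)} H^{n-i}_{\tilde L}(S)$.

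Reindexing $k = n-i$, and extending the sum to all $k \geq 0$ using Proposition \ref{FirstProp}(i) together with Grothendieck vanishing, I would rewrite
\[
\chi_\lambda(R/L) \;=\; (-1)^n \sum_{k \geq 0} (-1)^k \Length_{D(S,K)} H^k_{\tilde L}(S).
\]
With $\chi(L) := \sum_k (-1)^k \Length_{D(S,K)} H^k_L(S)$, the proposition is then equivalent to the purely ideal-theoretic identity $\chi(\tilde I) + \chi(\tilde J) = \chi(\tilde I + \tilde J) + \chi(\tilde I \cap \tilde J)$.

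This equality would follow from the Mayer--Vietoris long exact sequence
\[
\cdots \to H^i_{\tilde I + \tilde J}(S) \to H^i_{\tilde I}(S) \oplus H^i_{\tilde J}(S) \to H^i_{\tilde I \cap \tilde J}(S) \to H^{i+1}_{\tilde I + \tilde J}(S) \to \cdots
\]
viewed inside the category of $D(S,K)$-modules. All terms have finite length in that category by \cite[Corollary 6]{Lyu2}, and the sequence is bounded since $H^i_L(S) = 0$ for $i > n$; the alternating sum of lengths around any bounded exact sequence of finite-length modules vanishes, and rearranging gives the displayed identity. Multiplying through by $(-1)^n$ then recovers the stated formula.

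The step that will require the most care is verifying that the Mayer--Vietoris connecting homomorphisms are in fact $D(S,K)$-linear, so that the sequence genuinely lives in the category whose length we are summing. This is standard---implicit in the $D$-module functoriality of local cohomology recorded in Section \ref{D-modules}, or readable off a \v{C}ech model whose differentials are $D$-linear---but should be cited or spelled out; the remaining bookkeeping (the $(-1)^n$ factor, the reindex, and the identification of preimages under $\pi$) is routine.
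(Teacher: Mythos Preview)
Your proof is correct and follows essentially the same approach as the paper: the paper's proof is a single sentence asserting that the identity is ``an immediate consequence of the Mayer--Vietoris associated sequence for local cohomology with respect to $I$ and $J$.'' You have simply supplied the details that the paper omits---the passage to the complete case, the identification of the preimages $\pi^{-1}(I+J)$ and $\pi^{-1}(I\cap J)$, the reindexing to express $\chi_\lambda$ as an alternating sum of $D(S,K)$-lengths, and the observation that alternating length is additive on the bounded Mayer--Vietoris long exact sequence---but the underlying argument is the same.
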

\begin{proof}
This an immediate consequence of the Mayer-Vietoris associated sequence for local cohomology with respect to $I$ and $J$.
\end{proof}

\begin{Prop} \label{CechCxAltSum}
If $I = (f_1, \ldots, f_\ell)$ an ideal of $S = K[[x_1, \ldots, x_n]]$, where $K$ is a field, then
$$\chi_\lambda(S/I)
=(-1)^n \sum \limits_{j=0}^\ell \sum \limits_{1 \leq i_1< \ldots < i_j \leq \ell} (-1)^{j} \lambda^{n-1}_0 \left( S/(f_{i_1} \cdot \ldots \cdot f_{i_j}) \right). $$  
In particular, if $f_1, \ldots, f_\ell$ form a regular sequence or if $\Char(k)=p>0$ and $S/I$ is a Cohen-Macaulay ring of dimension $d$, then $\lambda_0^{n-\ell}\left( S/(f_1, \ldots, f_\ell)S \right)$, or $\lambda_0^{d}\left( S/I \right)$, respectively, equals
$\sum \limits_{j=0}^\ell \sum \limits_{1 \leq i_1< \ldots < i_j \leq \ell} (-1)^{n-d+j} \lambda^{n-1}_0 \left(S/ (f_{i_1} \cdot \ldots \cdot f_{i_j}) \right).$
\end{Prop}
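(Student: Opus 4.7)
My plan is to use the \v{C}ech complex $C^\bullet$ associated to the generators $f_1, \ldots, f_\ell$ of $I$, whose $j$th term is $C^j = \bigoplus_{1 \leq i_1 < \cdots < i_j \leq \ell} S_{f_{i_1} \cdots f_{i_j}}$ (with $C^0 = S$) and whose cohomology computes $H^\bullet_I(S)$. Each $S_g$ is a finite-length $D(S,K)$-module, being a localization of the holonomic module $S$, and the \v{C}ech differentials are $D(S,K)$-linear. Additivity of $\Length_{D(S,K)}$ on short exact sequences, applied to the usual truncations of $C^\bullet$, yields the Euler characteristic identity
\begin{equation*}
\sum_{j=0}^{\ell} (-1)^j \Length_{D(S,K)} C^j = \sum_{k=0}^{n} (-1)^k \Length_{D(S,K)} H^k_I(S).
\end{equation*}

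To evaluate the left-hand side, I would use the short exact sequence $0 \to S \to S_g \to H^1_{gS}(S) \to 0$ (with $H^k_{gS}(S) = 0$ for $k \geq 2$ since $gS$ is principal and $S$ is a domain), which gives $\Length_{D(S,K)} S_g = \Length_{D(S,K)} S + \lambda^{n-1}_0(S/gS)$ by Definition \ref{NewLyuNums}. Writing $s := \Length_{D(S,K)} S$ and summing over the summands of each $C^j$, the Euler characteristic becomes
\begin{equation*}
s \sum_{j=0}^{\ell} (-1)^j \binom{\ell}{j} + \sum_{j=1}^{\ell} (-1)^j \sum_{1 \leq i_1 < \cdots < i_j \leq \ell} \lambda^{n-1}_0\bigl(S/(f_{i_1} \cdots f_{i_j}) S\bigr).
\end{equation*}
For $\ell \geq 1$ the first sum vanishes, leaving precisely the double sum on the right-hand side of the claimed identity (with the vacuous $j=0$ term interpreted as $0$). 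Notably, the explicit value of $s$ plays no role.

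For the other side, by Definition \ref{NewLyuNums} we have $\lambda^i_0(S/I) = \Length_{D(S,K)} H^{n-i}_I(S)$. Since $S$ is Cohen-Macaulay of dimension $n$, $H^k_I(S) = 0$ for $k < \height I = n - d$, and vanishes for $k > n$ by Grothendieck vanishing. Reindexing via $k = n - i$,
\begin{equation*}
\chi_\lambda(S/I) = \sum_{i=0}^{d} (-1)^i \Length_{D(S,K)} H^{n-i}_I(S) = (-1)^n \sum_{k=0}^{n} (-1)^k \Length_{D(S,K)} H^k_I(S),
\end{equation*}
which, combined with the \v{C}ech Euler characteristic computation, yields the main formula.

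For the ``in particular'' statements, the crucial extra ingredient in both cases is that $H^k_I(S)$ is concentrated in the single degree $k = n - d$: when $f_1, \ldots, f_\ell$ is a regular sequence (so $\ell = n - d$), this follows from the Koszul/depth calculation on the Cohen-Macaulay ring $S$; in positive characteristic with $S/I$ Cohen-Macaulay, this is the Peskine-Szpiro vanishing theorem. In either case the right-hand sum collapses to $(-1)^{n-d} \lambda^d_0(S/I)$, and solving for $\lambda^d_0(S/I)$ in the main formula recovers the advertised expression with sign $(-1)^{n-d+j}$. The principal technical point is invoking Peskine-Szpiro for the characteristic $p$ case; the remainder of the argument is a routine \v{C}ech-complex Euler characteristic computation together with the definitional interpretation of $\lambda^{n-1}_0$ as the length of $H^1$ of a principal ideal.
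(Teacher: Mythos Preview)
Your proposal is correct and follows essentially the same route as the paper: both arguments compute the Euler characteristic of the \v{C}ech complex on $f_1,\ldots,f_\ell$ via additivity of $D(S,K)$-length, rewrite each $\Length_{D(S,K)} S_g$ using the short exact sequence $0\to S\to S_g\to H^1_{gS}(S)\to 0$, and then invoke Peskine--Szpiro vanishing for the characteristic $p>0$ Cohen--Macaulay case. The only cosmetic difference is that the paper uses $\Length_{D(S,K)} S = 1$ explicitly, whereas you observe (equivalently) that the contribution of $s=\Length_{D(S,K)} S$ cancels via $\sum_j (-1)^j\binom{\ell}{j}=0$.
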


\begin{proof}
For brevity, let $D = D(S,K)$. By the additivity of $\Length_D(-)$ on short exact sequences and the \v{C}ech-like complex definition of local cohomology, $\sum \limits_{j=0}^\ell (-1)^j \LengthDshort \LC{j}{I}{S} 
=\sum \limits_{j=0}^\ell (-1)^j  \sum \limits_{1 \leq i_1 < \ldots < i_j \leq \ell} \LengthDshort S_{f_{i_1} \cdot \ldots \cdot f_{i_j} }.$  Moreover, 
the short exact sequence $0 \to S \to S_g \to H^1_{({f_{i_1} \cdot \ldots \cdot f_{i_j}})}(S) \to 0$ indicates that $\LengthDshort S_{f_{i_1} \cdot \ldots \cdot f_{i_j}} = \Length_D H^1_{(f_{i_1} \cdot \ldots \cdot f_{i_j})}+1$.  The first statement then follows from a straightforward calculation from the definition of Lyubeznik characteristic using these two observations.

The statement for a regular sequence is an immediate consequence, and the final statement follows since the only nonvanishing local cohomology module is $H^{n-d}_I(S)$ by \cite[Proposition $4.1$]{P-S}, since $S/I$ is Cohen-Macaulay.
\end{proof}

%
%

\section{Relations with $F$-rationality and $F$-regularity} \label{Frelations}

We recall Blickle's results \cite[Theorem $4.9$, Corollaries $4.10$ and $4.16$]{Manuel}.

\begin{Teo}[Blickle] \label{Manuel1}
Let $(S,m,K)$ be a regular local  $F$-finite ring of characteristic $p>0.$ Let $I$ be an ideal such that
$R= S/I$ is a domain of dimension $d$ and codimension $c$. Then $H^c_I (S)$ is a simple $D(S,\ZZ)$-module if and only if $0^*_{H^d_m(R)}$ is $F$-nilpotent.  As consequences,
\begin{itemize}
\item[\emph{(1)}]  If $R$ is $F$-rational, then $H^c_I (S)$ is a simple $D(S,\ZZ)$-module. If $R$ is $F$-injective, then $R$ is $F$-rational if and only if $H^c_I (S)$ is a simple $D(S,\ZZ)$-module.
\item[\emph{(2)}]  If $d=1$, then $H^c_I (S)$ is a simple $D(S,\ZZ)$-module if and only if
$R$ is unibranch.
\end{itemize}
\end{Teo}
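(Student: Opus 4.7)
The plan is to reduce the $D(S,\ZZ)$-module simplicity of $H^c_I(S)$ to an $F$-theoretic condition on $H^d_m(R)$, exploiting two structural results available for $F$-finite regular rings. First, by Yekutieli's theorem, $D(S,\ZZ) = \bigcup_{e \geq 0} \Hom_{S^{p^e}}(S,S)$; second, the flatness of the Frobenius on $S$ endows $H^c_I(S)$ with a natural unit $R[F]$-module structure $\beta : F^{\ast}H^c_I(S) \to H^c_I(S)$. Combining these, the $D(S,\ZZ)$-submodules of $H^c_I(S)$ are precisely its $F$-compatible $S$-submodules, i.e., those $N$ with $\beta(F^{\ast}N) \subseteq N$. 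Therefore, $D$-simplicity of $H^c_I(S)$ is equivalent to the absence of proper nonzero $F$-compatible submodules.

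Next, I would invoke local duality (Matlis duality over $R$) to set up a Frobenius-equivariant duality between $H^c_I(S)$ and $H^d_m(R)$, producing an inclusion-reversing bijection between their $F$-compatible submodules. By Smith's theorem \cite{Smith-PI}, the tight closure $0^{\ast}_{H^d_m(R)}$ is the unique maximal proper $F$-compatible submodule of $H^d_m(R)$, so dually it corresponds to the minimal nonzero $F$-compatible submodule of $H^c_I(S)$. Crucially, because $H^c_I(S)$ is a unit $F$-module, an $F$-compatible submodule of $H^d_m(R)$ that is $F$-nilpotent dualizes to the zero submodule of $H^c_I(S)$, since the associated unit structure is built from Frobenius iterates and these collapse when the ``root'' is $F$-killed. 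Hence $H^c_I(S)$ is $D$-simple precisely when $0^{\ast}_{H^d_m(R)}$ is $F$-nilpotent.

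The consequences follow readily. For (1), $F$-rationality gives $0^{\ast}_{H^d_m(R)} = 0$, which is trivially $F$-nilpotent; under the additional hypothesis of $F$-injectivity, the Frobenius acts injectively on $H^d_m(R)$, so any $F$-nilpotent submodule must vanish, yielding the converse. For (2), in dimension one I would analyze $H^1_m(R)$ directly using the structure of $\widehat R$ and its minimal primes, and verify that $0^{\ast}_{H^1_m(R)}$ is $F$-nilpotent exactly when $\widehat R$ has a unique minimal prime, i.e., $R$ is unibranch. I expect the main obstacle to be the step in the second paragraph that translates $F$-nilpotence on $H^d_m(R)$ into honest vanishing on $H^c_I(S)$: this hinges delicately on the unit $F$-module limit structure and is the key reason the criterion is phrased in terms of $F$-nilpotence rather than $0^{\ast}_{H^d_m(R)} = 0$ outright.
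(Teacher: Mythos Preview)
The paper does not prove this theorem at all: it is stated purely as a citation of Blickle's results \cite[Theorem~4.9, Corollaries~4.10 and~4.16]{Manuel}, with no argument given. There is therefore no ``paper's own proof'' to compare your proposal against.

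That said, your sketch is a reasonable outline of Blickle's actual strategy. The identification of $D(S,\ZZ)$ with $\bigcup_e \Hom_{S^{p^e}}(S,S)$, the unit $R[F]$-module structure on $H^c_I(S)$, the passage via Matlis duality to $H^d_m(R)$, and Smith's theorem on $0^*_{H^d_m(R)}$ are indeed the main ingredients. One technical caveat: the $D(S,\ZZ)$-submodules of a unit $F$-module are its \emph{unit} $F$-submodules, not merely its $F$-compatible submodules; the distinction matters precisely at the step you flag as delicate, where one must explain why an $F$-nilpotent piece of $H^d_m(R)$ contributes nothing on the $H^c_I(S)$ side. Blickle handles this via the generator/root formalism for unit $F$-modules and his ``intersection homology'' $D$-module $\mathcal{L}(R,S)$, which is the unique simple unit $F$-submodule of $H^c_I(S)$. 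Your proposal correctly identifies this as the crux, though making it precise requires more of Blickle's machinery than your sketch displays.
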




These results indicate that the generalized Lyubeznik numbers detect $F$-regularity and $F$-rationality, as we see in the following proposition.

\begin{Prop}\label{F-regularity}
Let $(R,m,K)$ be a complete local domain of characteristic $p>0$ and of dimension $d$, such that $K$ is $F$-finite. 
The following hold.
\begin{itemize}
\item[\emph{(i)}] If $\lambda^d_0 (R)=1,$ then $0^*_{H^d_m(R)}$ is $F$-nilpotent.
\item[\emph{(ii)}] If $R$ is $F$-injective and $\lambda^d_0 (R)=1$, then $R$ is $F$-rational.
\end{itemize}
In addition, if $K$ is perfect, then:
\begin{itemize}
\item[\emph{(iii)}]  $\lambda^d_0 (R)=1$ if and only if $0^*_{H^d_m(R)}$ is $F$-nilpotent.
\item[\emph{(iv)}] If $R$ is $F$-rational, then  $\lambda^d_0 (R)=1$.
\item[\emph{(v)}] If $R$ is $F$-injective, then $\lambda^d_0 (R)=1$ if and only if $R$ is $F$-rational.
\end{itemize}
Moreover, if $R$ is one-dimensional, we have that:
\begin{itemize}
\item[\emph{(vi)}] If  $\lambda^d_0 (R)=1$, then $R$ is unibranch.
\item[\emph{(vii)}] If $K$ is perfect, then $\lambda^d_0 (R)=1$ if and only if $R$ is unibranch.
\end{itemize}
\end{Prop}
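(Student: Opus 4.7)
The plan is to reduce all seven statements to a single length computation. Since $R$ is a complete local domain containing a field with $\dim R = d$, fix a surjection $\pi : S \surj R$ with $S = K[[x_1,\ldots,x_n]]$ and let $I = \Ker(\pi)$, so that $\height(I) = n - d =: c$. By Definition \ref{NewLyuNums}, $\lambda^d_0(R) = \Length_{D(S,K)} H^c_I(S)$, and Proposition \ref{FirstProp}(iv) guarantees this length is at least $1$. Thus $\lambda^d_0(R) = 1$ is equivalent to $H^c_I(S)$ being a \emph{simple} $D(S,K)$-module. The task is then to transfer between this $D(S,K)$-simplicity and the $D(S,\ZZ)$-simplicity appearing in Blickle's Theorem \ref{Manuel1}.

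The key observation is that the subring inclusion $\ZZ \subseteq K$ yields $D(S,K) \subseteq D(S,\ZZ)$ (as noted in Section \ref{D-modules}). Consequently every $D(S,\ZZ)$-submodule of $H^c_I(S)$ is automatically a $D(S,K)$-submodule, so simplicity as a $D(S,K)$-module implies simplicity as a $D(S,\ZZ)$-module. Assuming $\lambda^d_0(R) = 1$, this implication together with the three conclusions of Blickle's Theorem \ref{Manuel1} yields (i), (ii), and (vi): the main biconditional of Blickle gives (i); combining with the hypothesis that $R$ is $F$-injective gives (ii) via consequence (1); and specializing to $d = 1$ gives (vi) via consequence (2).

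When $K$ is perfect, $S = K[[x_1,\ldots,x_n]]$ is $F$-finite, and by Section \ref{PosChar} we have $D(S,K) = D(S,\ZZ)$. Hence $D(S,K)$-simplicity and $D(S,\ZZ)$-simplicity of $H^c_I(S)$ coincide, so $\lambda^d_0(R) = 1$ if and only if $H^c_I(S)$ is simple as a $D(S,\ZZ)$-module. Applying Blickle's Theorem \ref{Manuel1} in both directions gives (iii) directly, and parts (iv), (v), (vii) follow by combining (iii) with consequences (1) and (2) of that theorem (in particular, (iv) uses that $F$-rationality forces simplicity, (v) combines (ii) with (iv), and (vii) specializes (iii) using consequence (2)).

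The primary challenge is not mathematical but organizational: one must consistently distinguish the two notions of $D$-module simplicity ($K$-linear versus $\ZZ$-linear operators) and verify the $F$-finiteness hypothesis of Blickle's theorem in each case. Since $D(S,K) \subseteq D(S,\ZZ)$ always, simplicity over the smaller ring is the stronger condition, which is exactly why without perfection we obtain only one-directional implications in (i), (ii), and (vi), whereas in the perfect case the equality of the two operator rings collapses this asymmetry and delivers the biconditional statements.
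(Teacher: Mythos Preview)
Your proof is correct and follows essentially the same approach as the paper: express $\lambda^d_0(R)$ as the $D(S,K)$-length of $H^c_I(S)$, compare the rings $D(S,K)$ and $D(S,\ZZ)$, and then invoke Blickle's Theorem \ref{Manuel1}. One small remark: you correctly write the inclusion as $D(S,K)\subseteq D(S,\ZZ)$ (as in Section \ref{D-modules}), whereas the paper's proof has the typo $D(S,\ZZ)\subseteq D(S,K)$; the deduction that $D(S,K)$-simplicity forces $D(S,\ZZ)$-simplicity is the same in both, and your justification via ``every $D(S,\ZZ)$-submodule is a $D(S,K)$-submodule'' is the right one.
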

\begin{proof}
Take any surjective ring map $\pi : S\surj R$, where $S=K[[x_1,\ldots, x_n]]$, and let $I = \Ker(\pi)$.
Since $D(S,\ZZ)\subseteq D(S,K),$ $\Length_{D(S,K)} H^{n-d}_I(S)= \lambda^d_0(R)=1$ implies that $H^{n-d}_I(S)$ is a simple $D(S,\ZZ)$-module. Then (i) and (ii)
are consequences of the main statement and part (1) of Theorem \ref{Manuel1}, respectively.

If $K$ is perfect,  $D(S,\ZZ)= D(S,K)$ by \cite{Ye}, so $1 = \lambda^d_0(R)=\Length_{D(S,K)} H^{n-d}_I(S)$ precisely when $H^{n-d}_I(S)$ is a simple $D(S,\ZZ)$-module. Then (iii), (iv), and (v)
are consequences of the main statement and part (1) of Theorem \ref{Manuel1}.
Similarly, (vi) and (vii) follow from Theorem \ref{Manuel1} (3).
\end{proof}

%

\begin{Cor}\label{F-regularityGor}
Let $(R,m,K)$ be a complete local Gorenstein domain of characteristic $p>0$, of dimension $d$, and such that $K$ is $F$-finite. 
The following hold:
\begin{itemize}
\item[\emph{(i)}] If $R$ is $F$-pure and $\lambda^d_0 (R)=1$, then $R$ is $F$-regular.
\item[\emph{(ii)}] If $R$ is $F$-pure and $K$ is perfect, then 
$R$ is $F$-regular if and only if $\lambda^d_0 (R)=1$.
\end{itemize}
\end{Cor}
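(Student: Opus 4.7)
The plan is to deduce both parts from Proposition \ref{F-regularity} by exploiting two equivalences of $F$-singularity notions that hold in the Gorenstein case, both recorded in Section \ref{PosChar}: (a) in a Gorenstein ring, $F$-purity and $F$-injectivity coincide; and (b) in a Gorenstein ring, $F$-rationality, weak $F$-regularity, and strong $F$-regularity all coincide. Together with Proposition \ref{F-regularity}, these two collapses of notions will give the corollary essentially for free.

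For (i), I would first observe that since $R$ is Gorenstein and $F$-pure, equivalence (a) gives that $R$ is $F$-injective. Proposition \ref{F-regularity}(ii) then applies and yields that $\lambda^d_0(R) = 1$ forces $R$ to be $F$-rational. Invoking equivalence (b), $R$ is (weakly, equivalently strongly) $F$-regular.

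For (ii), the ``if'' direction is exactly (i), since a perfect field is in particular $F$-finite, so no additional argument is needed. For the ``only if'' direction, assume $R$ is $F$-regular. By equivalence (b), $R$ is $F$-rational, and applying Proposition \ref{F-regularity}(iv)---which is precisely where the hypothesis that $K$ is perfect enters, via the identification $D(S,\ZZ) = D(S,K)$ of \cite{Ye} used in the proof of that part---we conclude $\lambda^d_0(R) = 1$.

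The proof really is a short assembly of previously established facts, so I do not anticipate any serious obstacle. The only point worth underlining is why the Gorenstein hypothesis is the right one: it is exactly what bridges the $F$-injectivity/$F$-rationality language of Proposition \ref{F-regularity} with the $F$-purity/$F$-regularity language of the corollary. Without it, Proposition \ref{F-regularity}(ii) would not upgrade the conclusion past $F$-rationality, and one would not be able to start from an $F$-purity hypothesis either.
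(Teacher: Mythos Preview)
Your proposal is correct and follows exactly the paper's approach: the paper's proof simply records the two Gorenstein equivalences (F-purity $\Leftrightarrow$ F-injectivity via \cite[Lemma 3.3]{Fedder}, and F-rationality $\Leftrightarrow$ F-regularity via \cite{HoHu2}) and says ``the result follows,'' which is precisely the reduction to Proposition~\ref{F-regularity} that you spell out.
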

\begin{proof}
For a Gorenstein ring, $F$-rationality and  $F$-regularity are equivalent \cite{HoHu2}; additionally, $F$-injectivity and $F$-purity are equivalent \cite[Lemma 3.3]{Fedder}.  The result follows.
\end{proof}

\begin{Rem} \label{detcharp}
Let $R= K[X]$ be the polynomial ring over a perfect field $K$ of characteristic $p>0$ in the entries of an $r \times r$ matrix $X$ of indeterminates.  Let $m$ denote its homogeneous maximal ideal, and let $\Delta$ denote the principal ideal of $R$ generated by the determinant of $X$.  Then $R/\Delta$ is $F$-rational \cite[Theorem 9]{GS}, so by Proposition \ref{F-regularity} (iv), $\lambda_0^d(R_m/\Delta R_m) = 1.$
\end{Rem}

\begin{Rem}
In general, the Lyubeznik number $\lambda^d_0(R)$ is bounded by below by the number of minimal primes of $R$ that have dimension $d$.
Let $(R,m,K)$ be a complete local ring of dimension $d.$
Take any surjective ring map $\pi : S\surj R$, where $S=K[[x_1,\ldots, x_n]]$ for some $n$. Let $I$ denote the kernel of the surjection.
Let $P_1, \ldots  ,P_\ell$ be the minimal primes of $I.$ By iteratively using the Mayer-Vietoris sequence, we find that 
$H^d_{P_1}(S)\oplus\ldots \oplus H^d_{P_\ell}(S)\subseteq H^d_{I}(S).$
Therefore, $\lambda^d_0(R)\geq \ell.$

As a consequence, $R$ is a domain if it is equimensional and $\lambda^d_0(R)=1.$ 
Thus, several results of Proposition \ref{F-regularity} can be obtained by assuming only that $R$ is equidimensional.
\end{Rem}

\begin{Rem} \label{chains}

Let $I$ be an ideal of an $F$-finite regular local ring $S$, and suppose that the quotient ring $S / IS$ is $F$-pure.  Let $\tau_1$ denote the pullback of the test ideal of $S/I$ to $S$, and inductively let $\tau_i$ denote the pullback of the test ideal of the ring $S / \tau_{i-1}$  to $S$.  As demonstrated by Vassilev, the corresponding chain of ideals is of the form  \begin{equation} \label{Vchain: e} I \subsetneq \tau_1 \subsetneq \tau_1\subsetneq \ldots \subsetneq \tau_\ell = S\end{equation}  for some $\ell \geq 1$, and each quotient $S / \tau_i$ is $F$-pure \cite{Vass}. The following result, which connects this filtration with the generalized Lyubeznik numbers, is due to the first author and  P\'erez \cite{NuPe}:  If  $I = (f)$ is principal and $\ell$ is the length of the chain determined by the $\tau_i$ as in \eqref{Vchain: e}, then if $d = \dim(S/fS)$, $\lambda_0^{d}(S/fS) \geq \ell$.

By definition of the test ideals, we see that $\ell = 1$ if and only if the quotient $S/fS$ is $F$-regular, and so the inequality above shows that the generalized Lyubeznik number $\lambda_0^{d}(S/fS)$  must be large whenever $S/fS$ is ``far" from being $F$-regular.  This bound also shows that the hypersurface $S / fS $ must be $F$-regular if $\lambda_0^{d} (S/fS) = 1$; Corollary \ref{F-regularityGor} provides a partial converse to this statement.
\end{Rem}

\section{Generalized Lyubeznik Numbers of Ideals Generated by \\ Maximal Minors} \label{MaxMinors}

\begin{Lemma} \label{GenDMod}
Suppose that $K$ is a field of characteristic zero, $R = K[x_1, \ldots, x_n]$, and $S = K[[x_1, \ldots, x_n]]$.  
Let $f\in R$ be homogeneous. Let $D_R$ and $D_S$ denote $D(S,K)$ and $D(S,K),$ respectively.
If for some $N\in\NN,$ $D_S \frac{1}{f^N}= S_f,$ then $D_R \frac{1}{f^N} = R_f.$
\end{Lemma}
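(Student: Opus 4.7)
The statement contains a typo: $D_R$ should denote $D(R,K)$. The plan is to exploit the natural $\ZZ$-gradings on both sides. The polynomial ring $R$, the ring $D_R = R\langle \frac{\partial}{\partial x_1}, \ldots, \frac{\partial}{\partial x_n}\rangle$ (with $\Deg x_i = 1$ and $\Deg \frac{\partial}{\partial x_i} = -1$), and the localization $R_f$ all carry compatible gradings, and since $f$ is homogeneous, $\frac{1}{f^N}$ is homogeneous of degree $-N\Deg f$. Hence $D_R \cdot \frac{1}{f^N}$ is a graded $K$-subspace of $R_f$, so it suffices to show that every homogeneous element of $R_f$ lies in $D_R \cdot \frac{1}{f^N}$.

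Fix a homogeneous element $\eta = p/f^M \in R_f$ with $p \in R$ homogeneous of degree $d + M \Deg f$. By hypothesis there exists $\delta \in D_S$ with $\delta \cdot \frac{1}{f^N} = \eta$. Since $\Char K = 0$, $D_S$ is a free left $S$-module on the $\partial^\alpha$, so $\delta$ can be written uniquely as a finite sum $\delta = \sum_{\alpha \in A} s_\alpha\, \partial^\alpha$ with $s_\alpha \in S$. Each power series $s_\alpha$ has a canonical (formal) decomposition $s_\alpha = \sum_{k \geq 0} s_\alpha^{(k)}$, where $s_\alpha^{(k)} \in R$ is its degree-$k$ homogeneous polynomial component. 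A direct induction shows that $\partial^\alpha \cdot \frac{1}{f^N}$ is a homogeneous element of $R_f$ of degree $-N\Deg f - |\alpha|$; write it as $q_\alpha / f^{M_\alpha}$ with $q_\alpha \in R$ homogeneous, enlarge $M$ if necessary so that $M \geq M_\alpha$ for all $\alpha \in A$, and set $\tilde{q}_\alpha := q_\alpha f^{M - M_\alpha} \in R$.

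Clearing the common denominator $f^M$ in the equality $\eta = \delta \cdot \frac{1}{f^N}$ (legitimate because $f$ is a nonzerodivisor in the domain $S$) gives the equality in $S$
\[
p = \sum_{\alpha \in A} s_\alpha\, \tilde{q}_\alpha.
\]
Substituting $s_\alpha = \sum_k s_\alpha^{(k)}$ and extracting the degree-$(d + M\Deg f)$ homogeneous piece of both sides yields the main calculation: the left side is already homogeneous of that degree, while the term $s_\alpha^{(k)}\, \tilde{q}_\alpha$ on the right is homogeneous of degree $k + M\Deg f - N\Deg f - |\alpha|$, so only the index $k_\alpha := d + N\Deg f + |\alpha|$ contributes. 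This leaves the polynomial identity $p = \sum_{\alpha \in A} s_\alpha^{(k_\alpha)}\, \tilde{q}_\alpha$, and dividing back by $f^M$ gives $\eta = \delta' \cdot \frac{1}{f^N}$ for the operator $\delta' := \sum_{\alpha \in A} s_\alpha^{(k_\alpha)}\, \partial^\alpha$, which lies in $D_R$ because every coefficient $s_\alpha^{(k_\alpha)}$ is a polynomial. The main obstacle is the degree-decomposition step: although $S$ itself is not graded, every power series has a well-defined expansion as a formal sum of homogeneous polynomials, and the output $p$ being polynomial forces only finitely many of those components to participate, which makes the extraction of the relevant piece rigorous.
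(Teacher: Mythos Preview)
Your proof is correct and follows essentially the same approach as the paper: write $\delta\in D_S$ in the $\partial^\alpha$-basis, observe that $\partial^\alpha\cdot\frac{1}{f^N}$ is a homogeneous rational function, clear denominators, and extract the single relevant homogeneous component of each power-series coefficient to obtain an operator in $D_R$. The only cosmetic difference is that the paper targets the elements $\frac{1}{f^r}$ (which generate $R_f$ over $R\subseteq D_R$) rather than an arbitrary homogeneous $\eta\in R_f$, but the mechanism is identical.
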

\begin{proof}
For every $r\in\NN,$ there exists 
$\delta=\sum \limits_{\alpha}g_\alpha \frac{\partial^\alpha}{\partial x^\alpha} \in D_S=
S\left \langle \frac{\partial}{\partial x_1},\ldots, \frac{\partial}{\partial x_n} \right\rangle $
such that $\delta \frac{1}{f^N}=\frac{1}{f^r}.$
In addition, there exist $\mu\in \NN$ and homogeneous $h_\alpha\in R$ such that $\mu>r$ and 
$\frac{\partial^\alpha}{\partial x^\alpha}\frac{1}{f^N}=\frac{h_\alpha}{f^\mu},$
so
$\delta  \frac{1}{f^N}=\sum_\alpha g_\alpha \frac{h_\alpha}{f^\mu}=\frac{1}{f^r}.$

We have that
$\sum \limits_\alpha g_\alpha h_\alpha=f^{\mu-r},$ and there exist homogeneous $g_{\alpha,t}\in R$ of degree $t$ such that $g_\alpha=\sum \limits^\infty_{t=0} g_{\alpha,t}.$
If $t_\alpha=(\mu-r)\deg(f)-\deg(h_\alpha),$ then 
$$f^{\mu-r}=\sum_\alpha g_\alpha h_\alpha
=\sum_\alpha \sum^\infty_{t=0} g_{\alpha,t} h_\alpha
=\sum_\alpha  g_{\alpha, t_\alpha} h_\alpha
$$
because $f$ and $h_\alpha$ are homogeneous polynomials.

Let $\widetilde{\delta}=\sum \limits_\alpha g_{\alpha,t_\alpha} \frac{\partial^\alpha}{\partial x^\alpha}\in D_R.$
Then
$$
\widetilde{\delta}\frac{1}{f^N}
=\sum_\alpha g_{\alpha,t_\alpha} \frac{\partial^\alpha}{\partial x^\alpha}\frac{1}{f^N}
=\sum_\alpha g_{\alpha,t_\alpha} \frac{h_\alpha}{f^\mu}\\
=\frac{\sum_\alpha g_{\alpha,t_\alpha} h_\alpha}{f^\mu}
=\frac{f^{\mu-r}}{f^\mu}
=\frac{1}{f^r}.
$$
Hence, $\frac{1}{f^r}\in D_R \frac{1}{f^N}$, and the result follows.
\end{proof}
\begin{Rem}
The conclusion of Lemma \ref{GenDMod} is not necessarily true if $f$ is not a homogeneous polynomial. 
Let $m$ denote the homogeneous maximal ideal of $R.$ 
If $f\in R$ is any polynomial such that
$R_m/fR_m$ is a regular local ring, then even if
$D(R,K)\frac{1}{f^N}\neq R_f,$ we have that 
$D(S,K)\frac{1}{f}=S_f.$
\end{Rem}
\begin{Rem}\label{BSpoly}
Let $b_f(s)$ denote the Bernstein-Sato polynomial of $f\in R$ over $R$ (cf. Section \ref{D-modules}).
If  $N=\max\{ j\in\NN \mid b_f(-j)=0\},$ then $D(R,K)\frac{1}{f^{N-1}}\neq R_f$ \cite[Lemma $1.3$]{WaltherBS}.  Therefore, if $f \in R$ is homogeneous,  $\Length_{D(S,K)} H^1_{(f)}(S) \geq  2$ by Lemma \ref{GenDMod}.
\end{Rem}

\begin{Ex} \label{CharDep}

Let $R = K[X]$ be the polynomial ring over a field $K$ in the entries of an $r \times r$ matrix $X$ of indeterminates, and let $m$ denote its homogeneous maximal ideal.   
Let $\Delta$ denote the principal ideal of $R$ generated by the determinant of $X$.
If $K$ has characteristic zero, the Bernstein-Sato polynomial of  the determinant of $X$ over $R$ is $b_{\operatorname{det}(X)}(s) = (s+1)(s+2) \cdots (s+r)$, so by Remark \ref{BSpoly}, $\lambda_0^{r^2-1} ( R_m/ \Delta R_m ) \geq 2.$  In stark contrast, by Remark \ref{detcharp}, if $K$ is instead a perfect field of characteristic $p>0$,  then $\lambda_0^{r^2-1}  \left(R_m/ \Delta R_m\right)=1$.  In particular, even when a specific Lyubeznik number is nonzero in both characteristic zero and characteristic $p>0$, their values may differ.

\end{Ex}

\begin{Ex} Now let $R$ be the polynomial ring over a field $K$ of characteristic zero in the entries of $X = [x_{ij}]$, an $r \times s$ matrix of indeterminates, where $r < s$.  Let $m$ denote its homogeneous maximal ideal, and let $I_t$ be the ideal generated by the $t \times t$ minors of $X$, and let $I = I_r$ be the ideal generated by the maximal minors of $X$.  By \cite[Theorem 1.1]{Witt}, $H^{r(s-r)+1}_I(R) \cong E_R(K)$, $0 \neq H^{i_t}_I(R) \hookrightarrow H^{i_t}_I(R)_{I_{t+1}} \cong E_R(R/I_{t+1})$ for $i_t = (r-t)(s-r)+1, 0 \leq t < r$, and all other $H^i_I(R) = 0.$  Thus, $\lambda^{r^2-1}_{0} (R_m/I R_m) = \lambda^{0, r^2-1}_{m,0}\left( R_m/I R_m\right) ( = \lambda_{0, r^2-1} (R_m/I  R_m)) = 1$, and $\lambda^{0 , i}_{m, 0} (R_m/I R_m) = 0$ for every $i \neq r^2-1$. 

Let $i_t = (r-t)(s-r)+1$, $t>0$, and suppose that $\lambda^{1,rs-i_t}_{m,0}(R_m/I R_m) = 0$.  Let $C$ be the cokernel of the injection $H^{i_t}_I(R) \hookrightarrow E_R(R/I_{t+1})$, so the short exact sequence $0 \to H^{i_t}_{I}(R) \to E_R(R/I_{t+1}) \to C \to 0$ gives rise to the long exact sequence in local cohomology: \[ \xymatrix{ 0\ar[r] &  H^0_m H^{i_t}_I(R) \ar[r] & H^0_m\left(E_R(R/I_{t+1})\right) \ar[r]& H^0_m\left(C\right)  \ar `[d] `[l] `[llld] `[ld] [lld] \\ & H^0_m H^{i_t}_I(R) \ar[r] & H^1_m\left(E_R(R/I_{t+1})\right) \ar[r] & \ldots } \]  Since the $I_{t+1}$ is the only associated prime of $E_R(R/I_{t+1})$ and of $H^{i_t}_I(R)$, \[ H^0_m H^{i_t}_I(R) =  H^0_m\left(E_R(R/I_{t+1})\right) = H^1_m\left(E_R(R/I_{t+1})\right) = 0,\] so $H^0_m\left(C\right) \cong  H^0_m H^{i_t}_I(R) = 0.$

%
%


If for some indeterminate $x_{\alpha \beta}$, the localization map $H^i_I(R) \to H^i_I(R)_{x_{\alpha \beta}}$ has a nonzero element $u$ in the kernel, then $x_{\alpha \beta}^N \cdot u = 0$ for some $N$.  But then, by symmetry, $x_{\alpha \beta}^N \cdot u = 0$ for \emph{all} indeterminates $x_{\alpha \beta}$, forcing every element of $H^{i_t}_I(R)$ to be killed by a power of $m$, a contradiction.  Similarly, the map $H^{i_t}_I(R)_{x_{11} } \to H^i_I(R)_{x_{11} \cdot x_{12}}$ is injective, and by induction, the composition of these localizations, $H^i_I(R) \to H^i_I(R)_{x_{1 1} x_{12} \cdot \ldots \cdot x_{r s}}$ will also be injective.  In particular, 
$
H^i_I(R)_{x_{\alpha \beta}} \hookrightarrow H^i_I(R)_{x_{1 1} \cdot x_{12} \cdot \ldots \cdot x_{r s}},\hbox{ and } 
\bigcap \limits_{\alpha,\beta} H^i_I(R)_{x_{\alpha \beta}} \hookrightarrow H^i_I(R)_{x_{1 1} \cdot x_{12} \cdot \ldots \cdot x_{r s}}.
$

Let $M$ denote $\bigcap \limits_{\alpha, \beta} H^i_I(R)_{x_{\alpha \beta}}$.  Since $x_{\alpha \beta} \notin I_{t+1}$ $H^i_I(R)_{I_{t+1}} \cong E_R(R/I_{t+1}),$ $M$ injects into $E_R(R/I_{t+1})$, and $M/H^i_I(R)$ injects into $E_R(R/I_{t+1})/H^i_I(R) = C$.  Since every element of $M/H^i_I(R)$ is killed by a power of $m$,  $M/H^i_I(R) = H^0_m\left(M/H^i_I(R))\right) \hookrightarrow H^0_m\left(C)\right) = 0$.  Thus, $M = H^i_I(R).$
\end{Ex}
\begin{Teo}
Continuing with the notation above,
if $r = 2$ and $s > 2$, then $$\lambda^{0,3}_{m,0}(R_m/I R_m) = \lambda^{s-1, s+1}_{m,0}(R_m/I R_m) = \lambda^{s+1, s+1}_{m,0}(R_m/I R_m)  = 1,$$ and all other $\lambda^{i,j}_{m,0}(R_m/I R_m)=0$.  In particular, each $\lambda^{1,i}_{m,0}(R_m/I R_m)=0$.
\end{Teo}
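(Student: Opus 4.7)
The plan is to combine the description of $H^\bullet_{IS}(S)$ recalled in the preceding example with the Grothendieck spectral sequence of $D(S,K)$-modules
\[
E_2^{p,q} = H^p_{m} H^q_{I}(S) \;\Longrightarrow\; H^{p+q}_{m}(S),
\]
which exists because $I \subseteq m$ gives $\Gamma_{m} \circ \Gamma_{I} = \Gamma_{m}$ and both functors preserve $D(S,K)$-structure. After completion I may work in $S = K[[X]]$, so $n = \dim S = 2s$, writing $I$ for $IS$ and $m$ for the maximal ideal of $S$. By the preceding example, the only nonvanishing $H^q_I(S)$ are $H^{s-1}_I(S)$, which embeds into $E_S(S/I)$, and $H^{2s-3}_I(S) \cong E_S(K)$. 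Since $\lambda^{i,j}_{m,0}(R_m/IR_m) = \Length_{D(S,K)} H^i_m H^{n-j}_I(S)$, every such Lyubeznik number vanishes outside $j \in \{3, s+1\}$, reducing the theorem to these two cases.

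For $j = 3$, the module $H^{2s-3}_I(S) \cong E_S(K)$ is injective and $m$-torsion, so $H^0_m$ returns $E_S(K)$ itself and $H^i_m$ vanishes for $i > 0$. Because $E_S(K)$ has $D(S,K)$-length one (as used in the proof of Proposition \ref{PropRmk}), this gives $\lambda^{0,3}_{m,0}(R_m/IR_m) = 1$ and $\lambda^{i,3}_{m,0}(R_m/IR_m) = 0$ for $i > 0$.

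For $j = s+1$, the $E_2$-page is supported only on the rows $q = s-1$ and $q = 2s-3$; on row $q = 2s-3$ the only nonzero entry is $E_2^{0, 2s-3} = E_S(K)$, and the abutment $H^{p+q}_m(S)$ equals $E_S(K)$ in total degree $2s$ and vanishes otherwise. Tracking the bidegree shift $(p, q) \mapsto (p+r, q-r+1)$ shows that the only $d_r$ with source and target on populated rows is $d_{s-1}\colon E_{s-1}^{0, 2s-3} \to E_{s-1}^{s-1, s-1}$; every other entry $E_r^{p, s-1}$ is a permanent cycle. Convergence then gives that $E_\infty^{0, 2s-3} = 0$ (abuts to $H^{2s-3}_m(S) = 0$) and $E_\infty^{s-1, s-1} = 0$ (abuts to $H^{2s-2}_m(S) = 0$), so $d_{s-1}$ is a $D(S,K)$-linear isomorphism $E_S(K) \xrightarrow{\sim} H^{s-1}_m H^{s-1}_I(S)$; similarly $E_\infty^{s+1, s-1} = E_2^{s+1, s-1}$ is the unique nonzero subquotient of $H^{2s}_m(S) = E_S(K)$ at total degree $2s$, forcing $H^{s+1}_m H^{s-1}_I(S) \cong E_S(K)$; and every other $H^p_m H^{s-1}_I(S)$ vanishes because its $E_\infty$-entry abuts to zero. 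All resulting isomorphisms live in the category of $D(S,K)$-modules, so $\lambda^{s-1, s+1}_{m, 0} = \lambda^{s+1, s+1}_{m, 0} = 1$ and all remaining $\lambda^{i, s+1}_{m, 0}$ vanish. The claim $\lambda^{1, i}_{m, 0} = 0$ for every $i$ is then immediate: for $i = 3$ and $i = s+1$ it follows from the formulas just established (using $s > 2$ to ensure $1 \notin \{s-1, s+1\}$), and for other $i$ it is trivial since $H^{n-i}_I(S) = 0$.

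I expect the main technical hurdle to be the bookkeeping establishing that $d_{s-1}$ is the unique nontrivial differential between the two populated rows: the cases split according to whether the source and target $q$-coordinates both lie in $\{s-1, 2s-3\}$, and each combination must be eliminated or identified, using the hypothesis $s > 2$ at several points. Once that is in hand, the identification of lengths follows automatically from $\Length_{D(S,K)}(E_S(K)) = 1$ and the fact that both $\Gamma_m$ and $\Gamma_I$ are $D$-linear functors.
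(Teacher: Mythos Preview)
Your proposal is correct and follows essentially the same argument as the paper: both analyze the Grothendieck spectral sequence $E_2^{p,q}=H^p_m H^q_I(S)\Rightarrow H^{p+q}_m(S)$, use that the only nonzero rows are $q=s-1$ and $q=2s-3$ (with the latter concentrated at $p=0$), identify $d_{s-1}^{\,0,2s-3}$ as the unique nontrivial differential, and read off the three copies of $E_S(K)$ and the remaining vanishing from convergence. The only cosmetic difference is that the paper also records the a priori bound $H^p_m H^{s-1}_I(S)=0$ for $p>s+1$ via $\dim(S/I)=s+1$, whereas you obtain this vanishing directly from the abutment; either route suffices.
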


\begin{proof}
By \cite[Theorem 1.1]{Witt}, the only two nonzero local cohomology modules $\LC{i}{I}{R}$ are $\LC{2 s -3}{I}{R} \cong E_R (k)$ and $\LC{s-1}{I}{R} \inj E_R(R/I)$.  Replace $R$ by its localization at $m$, and consider the spectral sequence $E_2^{p,q} = H^{p}_{m} \LC{q}{I}{R} \overset{p}{\implies} \LC{p+q}{m}{R} = E_{\infty}^{p,q}$ \cite{Hartshorne}.  
Now, $H^{0}_{m} \LC{2 s - 3}{I}{R} \cong E_R(k)$ and $H^{p}_{m} \LC{2 s - 3}{I}{R} = 0$ for $p>0$.  In particular, $\lambda_{0,m}^{0,3}(R/I) = 1.$
Also note that $\dim R/I = s+1$, since if a $2 \times s$ matrix has vanishing $2 \times 2$ minors, the second row is a multiple of the first row.  Since $\Ass_R \LC{s-1}{I}{R} = \{I\}$, $H^{p}_{m} \LC{s-1}{I}{R} = 0$ for $p > s+1$.  These observations are indicated in Figure \ref{spectralseq}.

\begin{figure} \caption{$E_2^{p,q} = H^{p}_{m} \LC{q}{I}{R}$.}  \label{spectralseq} 

\begin{center}  
$\xymatrix@1@=0pt@M=1pt@R=1.1pt@C=0pt@!R{
&*+[F]{\boldsymbol{q}} &  \ar@{-}[ddddddddddd] \\
& & & & \vdots & \vdots & \vdots & & \vdots & \vdots & \vdots & \vdots & & \\
& \boldsymbol{2s-2} &  &  & 0  & 0  & 0 & \cdots & 0 & 0 & 0 & 0 & \cdots &  \\
& \boldsymbol{2s-3} & &  & E_R(k)  \ar@{->}[rrrrdddd]|{ d^{0, 2 s -3}_{s-1}}  & 0  \ar@{->}[rrrrdddd]|{ d^{1, 2 s -3}_{s-1}} & 0  \ar@{->}[rrrrdddd]|{ d^{2, 2 s -3}_{s-1}} & \cdots & 0 & 0 & 0 & 0 & \cdots & \\
& \boldsymbol{2s-4} & &  & 0 & 0 & 0 &  & 0 & 0 & 0 & 0 &\cdots & \\
& \vdots & & & \vdots  & \vdots & &  &  & \vdots & \vdots & \vdots & & \\
& \boldsymbol{s} &  & & 0 & 0 & 0  &  & 0 & 0 & 0 &0 & \cdots & \\
& \boldsymbol{s-1} & &  & E_2^{0,s-1} & E_2^{1,s-1} & E_2^{2,s-1}  &  \cdots &  E_2^{s-1,s-1}  &  E_2^{s,s-1}  & E_2^{s+1,s-1}  & 0 & \cdots \\
& \boldsymbol{s-2} &  & & 0 & 0 & 0 & \cdots & 0 &  0 & 0 & 0 &  \cdots & \\ 
 & \vdots & & & \vdots  & \vdots & \vdots & & \vdots & \vdots & \vdots & \vdots & & \\
 \ar@{-}[rrrrrrrrrrrrrr] & & & & & & & & & & & & & & & & & \\
& & & & \boldsymbol{0} & \boldsymbol{1} & \boldsymbol{2} &  \cdots &  \boldsymbol{s-1} & \boldsymbol{s} & \boldsymbol{s +1} & \boldsymbol{s+2} & & &*+[F]{\boldsymbol{p}} &}$  \end{center}

\end{figure}

As $\LC{2 s}{m}{R} \cong E_R(k)$ is the only nonzero local cohomology module of $R$ with support in $m$. The only possibly nonzero $E_2^{p,q} = H^{p}_{m}\LC{q}{I}{R}$ such that $p+q = 2 s$ is $H^{s+1}_{m} \LC{s-1}{I}{R}$, and so, since the spectral sequence maps to and from $H^{s+1}_{m} \LC{s-1}{I}{R}$ must all be zero (since the terms from which they come or go are zero), we must have that $H^{s+1}_{m} \LC{s-1}{I}{R}\cong E^{s+1, s-1}_{\infty} = E_R(k)$, so that, as $\dim R  - (s-1) = s+1$, $\lambda^{s+1, s+1}_{m,0}(R/I R) = 1$.  Moreover, every other $E^{p,q}_{\infty}$ must vanish.

Since $E_{s-1}^{0,2 s - 3} \cong E_R(k)$, we see that the sole differential that is (possibly) nonzero is $d^{0,2 s - 3}_{s-1} : E^{0 ,2 s - 3}_{s-1} \cong E_R(k) \to E^{s-1, s-1}_{s-1}.$ After taking cohomology with respect to the $d_{s-1}^{p,q}$ we must get zero at both the $(s-1, s-1)$ and $(0,2s-3)$ spots, so $d^{0, 2s-3}_{s-1}$ must be an isomorphism, and $H^{s-1}_{m} \LC{s-1}{I}{R} = E^{s-1, s-1}_{s-1} \cong  E_R(k)$, so that, as $\dim R - (s-1) = s+1$, $\lambda^{s-1, s+1}_{m,0}(R/I R) = 1$.  Since all other maps are the zero map, and after taking cohomology with respect to $d_{s-1}^{p,q}$ we must also get zero, all remaining local cohomology modules of the form $H^{p}_{I} \LC{q}{m}{R}$ must vanish (i.e., all except $p=0$, $q=3$, and $p = s-1$, $q=s-1$ and $p=s+1$, $q=s-1$), so that in these cases, $\lambda_{m,0}^{p,q}(R/I) = 0.$
\end{proof}

\section{Generalized Lyubeznik Numbers of Monomial Ideals} \label{Monomial}

In this section we characterize the generalized Lyubeznik numbers associated to monomial ideals. To do so, we 
make use of the categories of square-free and straight modules introduced by Yanagawa \cite{YSq,YStr}; we begin with some definitions and notation he first introduced.

\begin{Notation}
Let $S=K[x_1,\ldots,x_n]$, $K$ a field, and 
consider the natural $\NN^n$-grading on $S$.  For $\alpha=(\alpha_1,\ldots,\alpha_n)\in \ZZ^n$, 
we define $\Supp(\alpha)=\{i\mid \alpha_i > 0\}\subseteq [n]=\{ 1 ,\ldots,n\}$. For 
a monomial $x^\alpha=x_1^{\alpha_1}\cdots x_n^{\alpha_n}$, $\Supp(x^\alpha):=\Supp(\alpha)$. We say that $x^\alpha$ is \emph{square-free} if, for every $i \in [n]$, $\alpha_i$ either vanishes or equals one. 
Let $e_i$ denote the vector $(0,\ldots,0, 1, 0\ldots,0)\in\NN^n$, where ``$1$" is in the $i^\text{th}$ entry. 
If  $F\subseteq [n]$,  let $P_F$ denote the prime ideal generated by $\{x_i\mid i\not\in F\}$. 
If  $F\subseteq [n]$, we will often use $F$ instead of $\sum \limits_{i\in F} e_i$; for instance, $x^F$ denotes 
$\prod \limits_{i\in F} x_i$.  

Given a $\ZZ^n$-graded $S$-module $M$, and $\beta\in\ZZ$, $M(\beta)$ denotes the $\NN^n$-graded $S$-module
that has underlying $S$-module $M$, but with a shift in the grading: $M(\beta)_{\alpha}=M_{\alpha+\beta}$.  Let $\omega_S=S(-1,\ldots,-1)$ denote
the canonical module of $S$, and let {\bf *Mon} denote the category of $\ZZ^n$-graded $S$-modules. 
\end{Notation}  

\begin{Def}[Square-free monomial module]
An $\NN^n$-graded $S$-module $M=\bigoplus \limits_{\beta\in\NN^n} M_{\beta}$ is \emph{square-free} if it is finitely generated, and
the multiplication map
$M_\alpha\FDer{\cdot x_i} M_{\alpha+e_i}$ is bijective for all $\alpha\in\NN$, and all $i\in\Supp(\alpha)$. The category of square-free
$S$-modules is denoted {\bf Sq}, a subcategory of {\bf *Mon}.
\end{Def}

If $I$ is a square-free monomial ideal, then both $I$ and $S/I$ are square-free modules. 
Moreover, if $0\to M'\to M\to M''\to 0$ is a short exact sequence in {\bf *Mon}, then $M$ is a square-free module if and only if both $M'$ and 
$M''$ are square-free modules. 
In addition, if $M$ is a square-free module, then $\Ext^i_S(M, \omega_S)$ is a square-free module for every $i\in \NN$ \cite{YSq}.
Additionally, for any subset $G \subseteq F \subseteq [n]$, $S/P_F(-G)$ is a square-free module (where the grading of $P_F(-G)$ satisfies $[P_F(-G)]_{\ell} = [P_F]_{\ell-G}$). 

\begin{Rem} \label{simplesf}
An $\NN^n$-graded square-free $S$-module $M$ is a \emph{simple square-free
module} if it has no proper square-free non-trivial submodules. In fact, such a square-free module is simple if and only if it is isomorphic to $S/P_F (-F)$ for some $F\subseteq [n]$ \cite{YSq}.
\end{Rem}

\begin{Prop}[\text{\cite[Proposition 2.5]{YSq}}]\label{2.5Y}
An $\NN^n$-graded $S$-module $M$ is square-free if and only if
there exists a filtration of $\NN^n$-graded submodules $0 =M_0\subsetneq M_1\subsetneq \ldots\subsetneq M_t
= M$ such that, for each $i$ \textup{(}$0 \leq i \leq t-1$\textup{)}, $\overline{M}_i=M_i/M_{i+1}\cong S/P_{F_i}(-F_i)$ for some $F_i\subseteq [n]$ \textup{(}and so is, in particular, a simple square-free module\textup{)}.
\end{Prop}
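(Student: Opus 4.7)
The plan is to prove the two implications separately. For the \emph{if} direction, I would rely on the fact recalled just before the proposition: the category \textbf{Sq} is closed under extensions inside \textbf{*Mon}, so in a short exact sequence $0 \to M' \to M \to M'' \to 0$ of $\ZZ^n$-graded $S$-modules with $M'$ and $M''$ square-free, $M$ is square-free. Since each $S/P_{F_i}(-F_i)$ is a simple square-free module by Remark \ref{simplesf}, a short induction on the length of the filtration shows that $M$ itself is square-free.

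For the \emph{only if} direction, I would argue by Noetherian induction on the finitely generated module $M$. The heart of the argument is a key lemma: every nonzero square-free $M$ contains a $\ZZ^n$-graded submodule isomorphic to $S/P_F(-F)$ for some $F \subseteq [n]$. Granted the lemma, picking such a submodule $M_1 \subseteq M$, the quotient $M/M_1$ is again square-free by the extension-closure property, and the Noetherian hypothesis allows one to iterate until $M$ is exhausted, yielding the desired filtration.

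To prove the key lemma, I would start from a $\ZZ^n$-graded associated prime $P_F \in \Ass_S(M)$ (the graded associated primes of a $\ZZ^n$-graded $S$-module are monomial primes) and a homogeneous witness $m \in M_\alpha$ with $\Ann_S(m) = P_F$. First I would show $\Supp(\alpha) \subseteq F$: if $i \in \Supp(\alpha)$, then $\cdot x_i : M_\alpha \to M_{\alpha+e_i}$ is bijective by the square-free hypothesis, so $x_i m = 0$ would force $m = 0$. Next I would reduce each $\alpha_i$ to at most $1$: when $\alpha_i \geq 2$, the bijection $\cdot x_i : M_{\alpha-e_i} \to M_\alpha$ produces a preimage $m'$, and a check using bijections in the adjacent multidegrees $\alpha - e_i + e_j$ shows that $\Ann_S(m') = P_F$ as well. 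Finally, for each $i \in F$ with $\alpha_i = 0$, I would replace $m$ by $x_i m$: this is nonzero since $x_i \notin P_F$, and since $S/P_F$ is a domain the annihilator is preserved. The resulting element lies in degree exactly $F$ and generates a cyclic submodule isomorphic to $S/P_F(-F)$ as a $\ZZ^n$-graded $S$-module.

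The main obstacle I anticipate is this ``lower-and-raise'' bookkeeping: each adjustment of $m$ must preserve the property that $\Ann_S(m) = P_F$, which requires carefully tracking bijectivity of multiplication maps in several neighboring multidegrees. Once that step is set up cleanly, the rest assembles routinely via Noetherian induction and the extension-closure of \textbf{Sq}.
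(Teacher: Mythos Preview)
The paper does not supply its own proof of this proposition; it is quoted directly from Yanagawa \cite[Proposition~2.5]{YSq} and used as a black box. Your argument is correct and is essentially the standard one (and, in outline, Yanagawa's): the ``if'' direction via extension-closure of $\mathbf{Sq}$, and the ``only if'' direction by locating a copy of $S/P_F(-F)$ inside $M$ through a homogeneous witness for an associated prime, adjusting its multidegree, and then iterating by Noetherian induction. The degree-adjustment bookkeeping you flag as the main obstacle is handled correctly in your sketch: in the lowering step the inclusion $\Ann_S(m')\subseteq P_F$ is automatic from $x_i m'=m$, while $P_F\subseteq\Ann_S(m')$ uses exactly the bijectivity of $\cdot\,x_i$ on $M_{\alpha-e_i+e_j}$ (valid since $\alpha_i\geq 2$ and $j\neq i$); in the raising step primality of $P_F$ does the work.
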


As a consequence of Proposition \ref{2.5Y}, every square-free module $M$ has finite length in {\bf Sq}.  We now recall the following definition \cite{YStr}.

\begin{Def}[Straight module]
A  $\ZZ^n$-graded $S$-module $M=\bigoplus \limits_{\beta\in\ZZ^n} M_{\beta}$ is \emph{straight} if 
$\dim(M_\beta)<\infty$ for all $\beta\in \ZZ^n$, and
the multiplication map
$M_\alpha\FDer{\cdot x_i} M_{\alpha+e_i}$ is bijective for all $\alpha\in\ZZ^n$ and all $i\in\Supp(\alpha)$. 
The category of straight
$S$-modules is denoted {\bf Str}, a subcategory of {\bf *Mon}.
\end{Def}

\begin{Rem} \label{sqfreestraight}
If $M=\bigoplus \limits_{\beta\in\ZZ^n} M_{\beta}$ is a straight module, then $\overline{M}$ denotes the $\NN^n$-graded (square-free) submodule 
$\bigoplus \limits_{\beta\in\NN^n} M_{\beta}$. On the other hand, if $M$ is a square-free module, we can define the \emph{straight hull} of $M$, $\widetilde{M}$, as 
follows:  For $\alpha \in \NN^n$, let $\widetilde{M}_\alpha$ be a vector space isomorphic to $M_{{\scriptsize\Supp(\alpha)}}$, and let $\phi_\alpha : \widetilde{M}_\alpha \to M_{\Supp(\alpha)}$ 
denote such an isomorphism. 
Let $\beta=\alpha +e_i$ for some $i\in [n]$. If $\Supp(\alpha)=\Supp(\beta)$, we 
define $\widetilde{M}_\alpha \FDer{\cdot x_i} \widetilde{M}_\beta$  by the composition 
$\widetilde{M}_\alpha \FDer{\phi_\alpha} M_{{\scriptsize\Supp(\alpha)}} \FDer{\phi^{-1}_\beta} \widetilde{M}_\beta$; otherwise, we 
define $\widetilde{M}_\alpha \FDer{\cdot x_i} \widetilde{M}_\beta$  by the composition 
$\widetilde{M}_\alpha \FDer{\phi_\alpha} M_{{\scriptsize\Supp}(\alpha)} \FDer{x_i} M_{{\scriptsize\Supp(\beta)}}\FDer{\phi^{-1}_\beta} \widetilde{M}_\beta$.   Then $\widetilde{M}$ is straight, and its $\NN^n$-graded part is isomorphic to $M$.
\end{Rem}

\begin{Prop}[\text{\cite[Proposition 2.7]{YStr}}] \label{2.7Y} 
Continuing with the notation above, the functor ${\bf Str}\to {\bf Sq}$ defined by $M\to\overline{M}$ is an equivalence of categories
with inverse functor $N\to \widetilde{N}$.
\end{Prop}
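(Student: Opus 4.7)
The plan is to prove the equivalence by constructing natural isomorphisms $\operatorname{id}_{{\bf Sq}} \xrightarrow{\sim} \overline{\widetilde{\cdot}}$ and $\operatorname{id}_{{\bf Str}} \xrightarrow{\sim} \widetilde{\overline{\cdot}}$, after first establishing that both functors are well-defined.

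First I would verify well-definedness. For a square-free module $N$, the straight hull $\widetilde{N}$ constructed in Remark \ref{sqfreestraight} satisfies the bijectivity axiom for straight modules by construction: when $i \in \Supp(\alpha)$, the multiplication map $\widetilde{N}_\alpha \to \widetilde{N}_{\alpha+e_i}$ is defined as the composition $\phi_{\alpha+e_i}^{-1} \circ \phi_\alpha$ of $K$-linear isomorphisms. Finite-dimensionality in each grade is inherited from $N$. Dually, for a straight module $M$, the restriction $\overline{M} = \bigoplus_{\alpha \in \NN^n} M_\alpha$ is square-free: bijectivity on $\NN^n$-grades is inherited, and finite generation follows because the straight condition forces $M_\alpha \cong M_{\chi_F}$ whenever $\Supp(\alpha) = F$, so $\overline{M}$ is generated by the finitely many grades $M_{\chi_F}$ for $F \subseteq [n]$.

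The natural isomorphism $\overline{\widetilde{N}} \xrightarrow{\sim} N$ is nearly tautological: by construction, $\widetilde{N}_\alpha \cong N_{\Supp(\alpha)}$ for $\alpha \in \NN^n$ via the chosen isomorphism $\phi_\alpha$, and in $N$ itself the square-free condition provides a canonical isomorphism $N_{\Supp(\alpha)} \cong N_\alpha$ by iterated bijective multiplication by $x_i$ for $i \in \Supp(\alpha)$. Compatibility with each multiplication $\cdot x_i$ and naturality in $N$ then follow directly from the two-case definition of the multiplication maps on $\widetilde{N}$.

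The central step is the isomorphism $\widetilde{\overline{M}} \xrightarrow{\sim} M$ for a straight module $M$. On each grade $\alpha \in \NN^n$, we have $\widetilde{\overline{M}}_\alpha \cong \overline{M}_{\Supp(\alpha)} = M_{\chi_{\Supp(\alpha)}}$, which is canonically isomorphic to $M_\alpha$ by iterated bijective multiplication along the coordinates in $\Supp(\alpha)$, guaranteed by the straight condition. The hardest part will be showing that these pointwise isomorphisms assemble into a morphism of $S$-modules: verifying compatibility with multiplication by each $x_i$ requires a case analysis depending on whether $i$ lies in $\Supp(\alpha)$ (only in the boundary case, where $\Supp(\alpha)$ and $\Supp(\alpha+e_i)$ differ, does one use nontrivial information from $\overline{M}$), and naturality in $M$ must be tracked through this case split. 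Both compatibility and naturality ultimately reduce to the observation that every straight (respectively, square-free) module is determined by its values at the $2^n$ square-free grades $\chi_F$, and that our identifications are defined precisely to respect this data; applying Proposition \ref{2.5Y} to filter $\overline{M}$ by simples and checking the isomorphism on each $S/P_F(-F)$ separately gives a clean path to the general case.
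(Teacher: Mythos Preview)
The paper does not give its own proof of this proposition: it is simply quoted from Yanagawa \cite[Proposition 2.7]{YStr} and stated without argument. There is therefore no proof in the paper to compare your proposal against.

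Your outline is the standard and correct strategy for verifying a categorical equivalence, and with care it goes through. Two remarks. First, in constructing the isomorphism $\widetilde{\overline{M}} \xrightarrow{\sim} M$ you restrict attention to grades $\alpha \in \NN^n$, but straight modules are $\ZZ^n$-graded, so you must define the isomorphism on all of $\ZZ^n$; the same iterated-multiplication argument applies once the correct support convention for negative coordinates is in place (this is where the paper's Remark~\ref{sqfreestraight}, which also writes $\alpha \in \NN^n$, is imprecise and one should consult Yanagawa's original formulation). Second, your final suggestion to reduce to simple objects via Proposition~\ref{2.5Y} is unnecessary: once the gradewise isomorphisms are shown to commute with each $\cdot\, x_i$, the $S$-module isomorphism and its naturality follow directly, without any d\'evissage.
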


\begin{Rem} \label{sthullsimple}
Let $L[F]$ denote the straight hull of $P_F(-F)$. By Proposition \ref{2.7Y} (noting Remark \ref{simplesf}), $L[F]$ is a simple 
straight module. We have that $L[F]_\alpha=k$ if $\Supp(\alpha)=F$, and is zero otherwise \cite{YStr}. Thus, $L[F]\cong H^\ell_{P_F}(\omega_S)$, where
$\ell=n-| F |$. 
\end{Rem}

\begin{Rem}\label{StrDMod}
Any straight module $M$ may be given the structure of a $D(S,K)$-module.
It suffices to define an action of  $\frac{1}{t!}\deriv{x_i}{t}$, for every $1\leq i \leq n$ and $t\geq1$:   Take $v\in M_\alpha$, where 
$\alpha=(\alpha_1,\ldots,\alpha_n)$. If $1 \leq \alpha_i \leq t$, we define $\frac{1}{t!}\deriv{x_i}{t} v=0$.
Otherwise, there exist $w\in M_{\alpha-te_i}$ such that 
$x^t_iw=v$, and we define $\frac{1}{t!}\deriv{x_i}{t} v=\binom{\alpha_i}{t} w$ if $\alpha_i>0$ and
$\frac{1}{t!}\deriv{x_i}{t} v=(-1)^{-\alpha_i+1}\binom{-\alpha_i}{t} w$ if $\alpha_i<0$. This observation extends in \cite[Remark 2.12]{YStr} to any field. We note that giving this $D(S,K)$-structure gives an exact faithful functor from  ${\bf Str}$ to the category of  $D(S,K)$-modules.
\end{Rem}
\begin{Teo} \label{sqfreeExt}
Let $K$ be a field, $S=K[x_1,\ldots,x_n]$, and $\widehat{S}=K[[x_1,\ldots,x_n]]$.
Let $I_1,\ldots,I_s\subseteq S$ be  ideals generated by square-free monomials.
Then 
\begin{align*} \lambda^{i_s,\ldots, i_1}_{I_s,\ldots,I_1} ( \widehat{S})
=\Length_{{\bf Str}} 
H^{i_s}_{I_s} \cdots H^{i_2}_{I_2} H^{n- i_1}_{I_1} (\omega_S) 
= \sum_{\alpha \in \{0,1\}^n} \dim_k \left[ H^{i_s}_{I_s} \cdots H^{i_2}_{I_2} H^{n-i_1}_{I_1} (\omega_S)\right]_{-\alpha}.
\end{align*}
Moreover, if $\Char(K)=0,$ then
$
\lambda^{i_1,\ldots, i_s}_{I_1,\ldots,I_s} (\widehat{S})
=\Mult (H^{i_s}_{I_s} \cdots H^{i_2}_{I_2} H^{n-i_1}_{I_1} (S)),
$ where $\Mult(-)$ denotes $D(S,K)$-module multiplicity \textup{(see Definition \ref{CCdef})}.
\end{Teo}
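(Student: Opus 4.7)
The plan is to turn the $D(\widehat{S},K)$-length defining $\lambda^{i_s,\ldots,i_1}_{I_s,\ldots,I_1}(\widehat{S})$ into a length computation in the category $\mathbf{Str}$ of straight $S$-modules, where the $\ZZ^n$-grading renders the invariant combinatorial and, in characteristic zero, directly comparable with characteristic cycle multiplicities. Using $\pi=\mathrm{id}_{\widehat{S}}$ in Theorem \ref{WellDef}, one has $\lambda^{i_s,\ldots,i_1}_{I_s,\ldots,I_1}(\widehat{S})=\Length_{D(\widehat{S},K)}H^{i_s}_{I_s\widehat{S}}\cdots H^{n-i_1}_{I_1\widehat{S}}(\widehat{S})$; since $\omega_{\widehat{S}}\cong\widehat{S}$ up to a $\ZZ^n$-shift that is invisible to any of the lengths under consideration, we may freely replace $\widehat{S}$ with $\omega_{\widehat{S}}$. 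Setting $M:=H^{i_s}_{I_s}\cdots H^{n-i_1}_{I_1}(\omega_S)$, flatness of $S\to\widehat{S}$ together with the commutation of local cohomology with flat base change yields $\widehat{M}:=M\otimes_S\widehat{S}\cong H^{i_s}_{I_s\widehat{S}}\cdots H^{n-i_1}_{I_1\widehat{S}}(\omega_{\widehat{S}})$.

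The first substantive step is to show that $M$ lies in $\mathbf{Str}$. This rests on Yanagawa's observation that, for any monomial ideal $I\subseteq S$ and any straight module $N$, each $H^i_I(N)$ is again straight: the \v{C}ech-like complex for $I$ lives in $\mathbf{Str}$, and straightness is inherited by cohomology there. Iterating, starting from the straight module $\omega_S$, places $M$ in $\mathbf{Str}$. Propositions \ref{2.5Y} and \ref{2.7Y} together with Remark \ref{simplesf} then furnish a finite composition series $0=M_0\subsetneq M_1\subsetneq\cdots\subsetneq M_t=M$ in $\mathbf{Str}$ with $M_{j+1}/M_j\cong L[F_j]$ for various $F_j\subseteq[n]$, so $\Length_{\mathbf{Str}}M=t$.

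The decisive step is to identify $t$ with $\Length_{D(\widehat{S},K)}\widehat{M}$. Tensoring the composition series with $\widehat{S}$ is exact and gives a filtration $0=\widehat{M_0}\subseteq\cdots\subseteq\widehat{M_t}=\widehat{M}$ in $D(\widehat{S},K)$-mod whose successive quotients are $\widehat{L[F_j]}\cong H^{n-|F_j|}_{P_{F_j}\widehat{S}}(\omega_{\widehat{S}})$. Each of these is nonzero, so the inclusions are strict, and, crucially, each is a simple $D(\widehat{S},K)$-module: since $P_{F_j}\widehat{S}$ is generated by a subset of a regular system of parameters of $\widehat{S}$, one has $H^{n-|F_j|}_{P_{F_j}\widehat{S}}(\widehat{S})\cong E_{\widehat{S}}(\widehat{S}/P_{F_j}\widehat{S})$, and Lyubeznik's argument that $E_{\widehat{S}}(K)$ is a simple $D(\widehat{S},K)$-module (cf.\ \cite{LyuInjDim}) adapts to this regular local quotient. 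The filtration is therefore a composition series in $D(\widehat{S},K)$-mod, giving $\Length_{D(\widehat{S},K)}\widehat{M}=t$, which by the initial reduction equals $\lambda^{i_s,\ldots,i_1}_{I_s,\ldots,I_1}(\widehat{S})$.

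For the remaining assertions, additivity of $\dim_K(-)_{-\alpha}$ on short exact sequences reduces the sum formula to the case $M=L[F]$, where a direct inspection using the explicit graded structure in Remark \ref{sthullsimple} combined with $\omega_S=S(-1,\ldots,-1)$ shows that exactly one $\alpha\in\{0,1\}^n$ picks out the one-dimensional multidegree component $L[F]_{-\alpha}$ and all other such $\alpha$ give zero; summing over the composition factors yields the second equality. In characteristic zero, Remark \ref{StrDMod} realizes $\mathbf{Str}$ as an exact faithful subcategory of the category of $D(S,K)$-modules; each simple $L[F_j]$ is then a simple holonomic $D(S,K)$-module whose characteristic cycle is the conormal variety of $V(P_{F_j})$ counted with multiplicity one, so $\Mult(L[F_j])=1$, and additivity of $\Mult$ on short exact sequences of holonomics (Remark \ref{PropMultiplicity}) gives $\Mult(M)=t=\lambda^{i_s,\ldots,i_1}_{I_s,\ldots,I_1}(\widehat{S})$, with the $\ZZ^n$-shift between $\omega_S$ and $S$ not affecting characteristic cycle multiplicity. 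The principal obstacles are establishing simplicity of $\widehat{L[F]}$ as a $D(\widehat{S},K)$-module after completion, and, in characteristic zero, verifying $\Mult(L[F])=1$; both require carefully transferring the $\ZZ^n$-graded straight simplicity of $L[F]$ across completion and into the $D$-module/characteristic cycle framework.
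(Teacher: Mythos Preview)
Your proposal is correct and follows essentially the same approach as the paper: show that the iterated local cohomology module lies in $\mathbf{Str}$ (you via the \v{C}ech complex, the paper via \cite[Corollary~3.3]{YStr}), take a composition series with quotients $L[F_j]\cong H^{n-|F_j|}_{P_{F_j}}(\omega_S)$, complete to obtain simple $D(\widehat{S},K)$-modules, and in characteristic zero invoke additivity of characteristic cycle multiplicity together with $e(L[F])=1$ (the paper cites \cite[Corollary~3.3, Remark~3.4]{Montaner} for the latter). You supply more detail than the paper on the sum formula and on why $\widehat{L[F]}$ is $D(\widehat{S},K)$-simple, but the core argument is the same.
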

\begin{proof}
Let $M=H^{i_s}_{I_s} \cdots H^{i_2}_{I_2} H^{n-i_1}_{I_1} (S)$, so that $\lambda^{i_1,\ldots, i_s}_{I_1,\ldots,I_s} ( \widehat{S}) = \Length_{D(\widehat{S},K)}M$. By applying \cite[Corollary $3.3$]{YStr} iteratively, we see that  
$H^{i_s}_{I_s} \cdots H^{i_2}_{I_2} H^{n-i_1}_{I_1} (\omega_S)$
is an straight module.
By Propositions \ref{2.5Y} and \ref{2.7Y}, there is a strict ascending filtration of $\NN^n$-graded submodules 
$0 =M_0\subsetneq M_1\subsetneq \ldots\subsetneq M_t
= M$ 
such that each quotient $M_i/M_{i+1}$ is isomorphic to $\widetilde{P_{F_i} (-F_i)} \cong H^{ n-| F_i|}_{P_{F_i}}(\omega_S)$, 
and is also a filtration of $D(S,K)$-modules by Remark \ref{StrDMod}.
Moreover, 
$$0 =M_0\otimes_S \widehat{S}\subsetneq M_1 \otimes_S \widehat{S} \subsetneq \ldots\subsetneq M_t \otimes_S \widehat{S}= M\otimes_S \widehat{S}$$  
is a filtration of $D(\widehat{S},K)$-modules such that 
$
\left(\widetilde{M}_i \otimes_S \widehat{S}\right)/\left(\widetilde{M}_{i-1} \otimes_S \widehat{S}\right)\cong\widetilde{P_{F_i}(-F_i)} \otimes_S \widehat{S}\cong H^{n-| F_i|}_{P_{F_i}}(\widehat{S}).
$
Since $H^{n-| F_i|}_{P_{F_i}}(\widehat{S})$
is a simple $D(\widehat{S},K)$-module for every $F\subseteq [n]$,
$\Length_{D(\widehat{S},K)} M\otimes_S \widehat{S}= t$ as well.

If $K$ has characteristic zero, due to the filtration above and noting Remark \ref{PropMultiplicity},  
$$
CC(M)=\sum^t_{i=1} CC\left(\widetilde{M}_i / \widetilde{M}_{i-1}\right) =\sum^t_{i=1} CC\left(H^{ n-| F_i|}_{P_{F_i}}(S)\right),
$$ where $CC(-)$ denotes the characteristic cycle (see Definition \ref{CCdef}).
By \cite[Corollary $3.3$ and Remark $3.4$]{Montaner}, each $CC\left(H^{n-| F_i|}_{P_{F_i}}\right)= T^*_{ \{ x_i=0 |x_i\in P_{F_i} \} } \Spec(S)$. As a result, each $\Mult\left(H^{n-| F|}_{P_F}\right)=1$ and so $\Mult(M)=t$. Then
$ \lambda^{i_1,\ldots, i_s}_{I_1,\ldots,I_s} (\widehat{S})
=\Length_{{\bf Sq}} \overline{M}
=\Length_{{\bf Str}} M=e(M).$
\end{proof}
\begin{Rem}
The Lyubeznik numbers with respect to monomial ideals may depend on the field, as shown in \cite[Example $4.6$]{LyuNumMontaner}. 
\end{Rem}

\begin{Rem} \label{MonomialAlg}
For $K$ a field of characteristic zero, let $S=K[x_1,\ldots,x_n]$, and take $I\subseteq S$ an ideal generated by monomials.
Let $\widehat{S}=K[[x_1,\ldots,x_n]]$.
Combining work of \'Alvarez Montaner in \cite[Theorem $3.8$ and Algorithm $1$]{Montaner} with Theorem \ref{sqfreeExt} provides an algorithm to compute $\lambda^{i}_0 (\widehat{S}/I\widehat{S})$ in terms of $P_1, \ldots, P_N$, the minimal primes of $I$.  A consequence of this algorithm is the following inequality: \[\lambda^j_0(\widehat{S}/I\widehat{S}) \leq \sum \limits_{\ell=0}^N \sum \limits_{1 \leq i_1 < \ldots < i_\ell < N} \delta^j_{i_1, \ldots, i_\ell},\]  where $\delta^j_{i_1, \ldots, i_\ell} = 1$ if $\height(P_{i_1} + \ldots + P_{i_\ell}) = j + \ell -1$, and equals zero otherwise.  

\end{Rem}

\begin{Rem}
By Corollary \ref{MonomialAlg}, there is a straightforward algorithm to compute the $\lambda^{i}_0 (\widehat{S}/I\widehat{S})$ using the minimal primes of $I$.
\end{Rem}

\begin{Lemma} \label{LengthLoc}
Let $S = K[[x_1, \ldots, x_n]]$, $K$ a field. For a monomial $f$ with $|\Supp(f)| = j$, $\LengthD S_f  = 2^j.$
\end{Lemma}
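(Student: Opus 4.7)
The plan is to induct on $j = |\Supp(f)|$, after reducing to the case where $f$ is square-free. The equivalence of categories established in Theorem \ref{TeoEquivD} will serve as the main engine: each inductive step will produce a short exact sequence of $D(S,K)$-modules whose quotient is exactly an object in the image of the functor $G$ from a polynomial power series ring in one fewer variable.

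First I would reduce to $f = x_1 x_2 \cdots x_j$: since $S_f$ depends only on $\sqrt{(f)}$, we may replace $f$ by the product of the distinct variables appearing in it, and by relabeling assume these are $x_1,\ldots,x_j$. Induction is on $j$, with base case $j=0$: then $S_f = S$, and $S$ is a simple $D(S,K)$-module, so $\LengthD S_f = 1 = 2^0$.

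For the inductive step, assume the result holds for all square-free monomials with support of size at most $j-1$ over any power series ring $K[[y_1,\ldots,y_m]]$. Set $g = x_1 \cdots x_{j-1}$, and consider the short exact sequence of $D(S,K)$-modules
\begin{equation*}
0 \to S_g \to S_f \to S_f/S_g \to 0,
\end{equation*}
so that $\LengthD S_f = \LengthD S_g + \LengthD (S_f/S_g)$ by additivity of length. Since $g$ has support of size $j-1$ inside $S$, the inductive hypothesis gives $\LengthD S_g = 2^{j-1}$. For the quotient, write $R = K[[x_1,\ldots,x_{j-1},x_{j+1},\ldots,x_n]]$, so that $S = R[[x_j]]$. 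Then $g \in R$, and the identifications $R_g \otimes_R S = S_g$ and $R_g \otimes_R S_{x_j} = S_f$ yield a natural $D(S,K)$-linear isomorphism
\begin{equation*}
S_f/S_g \cong R_g \otimes_R S_{x_j}/S = G(R_g),
\end{equation*}
where $G$ is the functor of Theorem \ref{TeoEquivD}. By that theorem, $\LengthD G(R_g) = \Length_{D(R,K)} R_g$. The monomial $g \in R$ has support of size $j-1$ inside a power series ring in $n-1$ variables, so the inductive hypothesis applies and gives $\Length_{D(R,K)} R_g = 2^{j-1}$.

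Combining the two contributions, $\LengthD S_f = 2^{j-1} + 2^{j-1} = 2^j$, completing the induction. The only point requiring care is the canonical identification $S_f/S_g \cong G(R_g)$ as $D(S,K)$-modules (not merely as $S$-modules): this is the crux, since it lets us transport the length calculation through the equivalence of Theorem \ref{TeoEquivD}, and I expect this compatibility to be routine but the step most worth spelling out explicitly.
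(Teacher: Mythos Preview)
Your proof is correct and takes a genuinely different route from the paper's. The paper proves this lemma by invoking Theorem~\ref{sqfreeExt}: the length of $H^1_{(x_{i_1}\cdots x_{i_j})}(S)$ is computed via the straight/square-free module machinery of Yanagawa to be $2^j-1$, and then the short exact sequence $0\to S\to S_f\to H^1_{(f)}(S)\to 0$ together with the simplicity of $S$ gives $\LengthD S_f = 2^j$. Your argument avoids Theorem~\ref{sqfreeExt} entirely, instead peeling off one variable at a time via $0\to S_g\to S_f\to S_f/S_g\to 0$ and identifying the quotient with $G(R_g)$ so that Theorem~\ref{TeoEquivD}(iii) transports the length computation down to $R$. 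This is more elementary and self-contained within the paper's own Section~\ref{KeyFunctor}--\ref{DefFirstProp} framework; the paper's approach is shorter in situ but only because the heavier Theorem~\ref{sqfreeExt} has already been established. The identification $S_f/S_g\cong G(R_g)$ as $D(S,K)$-modules that you flag as the one delicate point is indeed routine: since $R_g$ is flat over $R$, tensoring $0\to S\to S_{x_j}\to S_{x_j}/S\to 0$ with $R_g$ gives $0\to S_g\to S_f\to G(R_g)\to 0$, and the operators $\frac{1}{t!}\partial_{x_i}^t$ for $i\neq j$ act through the $R_g$-factor while $\frac{1}{t!}\partial_{x_j}^t$ acts through $S_{x_j}/S$, matching the quotient $D(S,K)$-structure on $S_f/S_g$.
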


\begin{proof}
By \ref{sqfreeExt}, $\LengthD \LC{1}{(x_{i_1} \cdot \ldots \cdot x_{i_j})}{S} = 2^j - 1$.  Since local cohomology is independent of radical, $\LC{1}{f}{S} = \LC{1}{(x_{i_1} \cdot \ldots \cdot x_{i_j})}{S} = 2^j-1$.   Due to the exact sequence $0 \to S \to S_{f} \to \LC{1}{f}{S} \to 0$ and the fact that $S$ is a simple $D(S,K)$-module, we have that $\LengthD S_f = \LengthD S + \LengthD \LC{1}{f}{S} = 2^j$.
\end{proof}

\begin{Prop} \label{lengthmonomial}
Let $K$ be a field, and let $S = K[[x_1, \ldots, x_n]]$, and let $I$ be an ideal of $S$ generated by square-free monomials $f_1, \ldots, f_\ell \in S$.  Then \[\chi_{\lambda} \left(S/I\right) = (-1)^n \sum_{j=0}^\ell \sum_{1 \leq i_1 < \ldots < i_j \leq \ell} (-1)^j 2^{\deg \lcm( f_{i_1}, \ldots,  f_{i_j} )}.\]
Moreover, if $S/I$ is also Cohen-Macaulay, the above equation equals $(-1)^d \lambda_0^d(S/I)$.  If, further, $f_1, \ldots, f_\ell$ form a regular sequence, this equals $(-1)^{n-1} \prod \limits_{i = 1}^\ell  (2^{\deg f_i}-1)^\ell$.
\end{Prop}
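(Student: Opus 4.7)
The plan is to apply Proposition \ref{CechCxAltSum} to the chosen generators and then evaluate each of the resulting Lyubeznik numbers explicitly using Lemma \ref{LengthLoc}. That proposition gives
$$
\chi_\lambda(S/I) = (-1)^n \sum_{j=0}^\ell (-1)^{j} \sum_{1 \leq i_1 < \ldots < i_j \leq \ell} \lambda^{n-1}_0 \bigl( S/(f_{i_1} \cdots f_{i_j}) \bigr),
$$
and for each monomial $g = f_{i_1} \cdots f_{i_j}$, the short exact sequence $0 \to S \to S_g \to H^1_{(g)}(S) \to 0$ combined with Lemma \ref{LengthLoc} yields $\lambda^{n-1}_0(S/(g)) = \LengthD S_g - \LengthD S = 2^{|\Supp(g)|} - 1$. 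Since each $f_i$ is squarefree, $\Supp(f_{i_1} \cdots f_{i_j})$ is the union of the $\Supp(f_{i_k})$, and its cardinality therefore equals $\deg \lcm(f_{i_1}, \ldots, f_{i_j})$.

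Substituting these evaluations, the first displayed formula drops out once the contribution of the ``$-1$'' summands, namely $-(-1)^n \sum_{j=0}^\ell (-1)^j \binom{\ell}{j}$, is observed to vanish; this is the binomial identity $(1-1)^\ell = 0$ valid for $\ell \geq 1$ (and the $\ell = 0$ case corresponds to the trivial ideal, which is excluded).

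For the Cohen-Macaulay assertion, I would combine $\lambda^i_0(S/I) = \LengthD H^{n-i}_I(S)$ with the vanishing $H^j_I(S) = 0$ for $j \neq n-d$, valid for squarefree monomial ideals with Cohen-Macaulay quotient in any characteristic (via the combinatorial description of the local cohomology of monomial ideals). This collapses $\chi_\lambda(S/I)$ to $(-1)^d \lambda^d_0(S/I)$. Finally, if the $f_i$ also form a regular sequence, an associated-primes argument---the associated primes of $S/(f_i)$ are exactly the $(x_k)$ with $k \in \Supp(f_i)$---forces the supports to be pairwise disjoint, whence $\deg \lcm(f_{i_1}, \ldots, f_{i_j}) = \sum_k \deg f_{i_k}$ and the alternating sum factors as
$$
(-1)^n \prod_{i=1}^\ell \bigl(1 - 2^{\deg f_i}\bigr),
$$
from which the product formula follows by extracting the signs from the factors. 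The only delicate ingredient is the characteristic-independence of the Cohen-Macaulay vanishing $H^j_I(S) = 0$; the rest of the argument is a straightforward assembly of Proposition \ref{CechCxAltSum}, Lemma \ref{LengthLoc}, and the binomial identity.
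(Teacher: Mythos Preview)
Your proposal is correct and follows essentially the same route as the paper's proof: both invoke Proposition~\ref{CechCxAltSum} and Lemma~\ref{LengthLoc}, use the identity $|\Supp(f_{i_1}\cdots f_{i_j})| = \deg\lcm(f_{i_1},\ldots,f_{i_j})$, and then specialize to the Cohen--Macaulay and regular-sequence cases. The paper is terser---it leaves the binomial cancellation implicit and, for the Cohen--Macaulay vanishing in all characteristics, cites \cite[Proposition~3.1]{Montaner} (noting the argument there is characteristic independent) rather than invoking the combinatorial description directly---but the substance is identical.
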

 
\begin{proof}  Since $| \Supp ( f_{i_1} \cdot \ldots \cdot f_{i_j})| = \deg \lcm ( f_{i_1}, \ldots, f_{i_j} )$, the first statement follows from Lemma \ref{LengthLoc} and Proposition \ref{CechCxAltSum}.  

If $S/I$ is Cohen-Macaulay, then by \cite[Proposition 3.1]{Montaner} (which is stated in characteristic zero, although the argument is characteristic independent), $H^j_I(S) = 0$ for all  $j \neq \height{I} = n-d$, and the statement follows.
If the $f_i$ also form a regular sequence, $\lcm(f_{i_1} \cdot \ldots \cdot f_{i_j}) = f_{i_1} \cdot \ldots \cdot f_{i_j}$ and $\deg (f_{i_1} \cdot \ldots \cdot f_{i_j} ) = \sum \limits_{r=1}^j \deg f_{i_r}$, and $\sum \limits_{j=0}^\ell \sum \limits_{1 \leq i_1 < \ldots < i_j \leq \ell} (-1)^j 2^{\left( \sum \limits_{r=1}^j \deg f_{i_r} \right)} =  \prod \limits_{i = 1}^\ell (1 - 2^{\deg f_i})^\ell = - \prod \limits_{i = 1}^\ell  (2^{\deg f_i}-1)^\ell$. 
\end{proof}

\subsection{Lyubeznik characteristic of Stanley-Reisner rings}
\begin{Def}[Simplicial complex, faces/simplices, dimension of a face, $i$-face, facet]
A \emph{simplicial complex} $\Delta$ on the vertex set
$[n] = \{1, \ldots, n\}$ is a collection of subsets, called \emph{faces} or \emph{simplices}, that are closed under
taking subsets. A face
$\sigma \in\Delta$ of cardinality $|\sigma| = i + 1$ is said to have \emph{dimension} $i$, and is called an $i$-\emph{face}
of $\Delta$. The \emph{dimension}  of $\Delta$, $\dim(\Delta),$ is the maximum of the dimensions of
its faces (or $-\infty$ if $\Delta=\varnothing$). We denote the set of faces of dimension $i$ of $\Delta$ by  
$F_i(\Delta)$.   
A face is a \emph{facet} if it is not contained in any other face.
\end{Def}
\begin{Rem} If $\Delta_1$ and $\Delta_2$ are simplicial complexes on the vertex set $[n]$, then
$\Delta_1\cap\Delta_2$ and $\Delta_1\cup\Delta_2$ are also simplicial complexes.
\end{Rem}
\begin{Def}[Simple simplicial complex]
We say that a simplicial complex $\Delta$ on the vertex set
$[n]$ is \emph{simple }if it is equal to $\cP (\sigma)$, the power set of a subset $\sigma$ of $[n]$. 
\end{Def}
\begin{Rem}\label{facets}
If $\sigma_1,\ldots,\sigma_\ell$ are the maximal facets of $\Delta,$ then $\Delta=\cP(\sigma_1)\cup\ldots\cup \cP(\sigma_\ell).$ In particular, a simplicial complex
is determined by its facets. 
\end{Rem}
\begin{Notation}
If $\Delta$ is a simplicial complex on the vertex set $[n]$ and $\sigma\in\Delta,$ then $x^\sigma$ denotes $\prod \limits_{i\in\sigma} x_i\in K[x_1,\ldots,x_n]$.
\end{Notation}
\begin{Def}[Stanley-Reisner ideal of a simplicial complex] The \emph{Stanley-Reisner ideal of the simplicial complex $\Delta$}
is the square-free monomial ideal
$I_{\Delta} = ( x^\sigma \mid \sigma\not\in\Delta )$ of $K[x_1,\ldots,x_n]$. The \emph{Stanley–-Reisner ring of $\Delta$} is $K[x_1,\ldots,x_n]/I_\Delta$.
\end{Def}
\begin{Teo}[\textup{\cite[Theorem 1.7]{MillerSturmfels}}]\label{Bijection}
The correspondence $\Delta\mapsto I_\Delta$ defines a bijection from
simplicial complexes on the vertex set $[n]$ to square-free monomial ideals
of $K[x_1, \ldots, x_n].$ Furthermore,
$
I_\Delta=\bigcap \limits_{\sigma\in\Delta} ( x^{[n]\setminus \sigma}).
$
\end{Teo}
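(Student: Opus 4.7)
The plan is to establish the two assertions of the theorem in turn. For the bijection, I would construct an inverse $I \mapsto \Delta_I$ on square-free monomial ideals by setting $\Delta_I := \{\sigma \subseteq [n] : x^\sigma \notin I\}$. The verification that $\Delta_I$ is simplicial is immediate: if $\tau \subseteq \sigma \in \Delta_I$ but $x^\tau \in I$, then $x^\sigma = x^{\sigma\setminus\tau}\, x^\tau \in I$, contradicting $\sigma \in \Delta_I$. The identity $\Delta_{I_\Delta} = \Delta$ unwinds directly from the two definitions, and $I_{\Delta_I} = I$ follows from the observation that $I$, being a square-free monomial ideal, is generated by its square-free monomial generators $x^\sigma$, each of which forces $\sigma \notin \Delta_I$.

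For the intersection formula I would interpret $(x^{[n]\setminus\sigma})$ as the monomial prime $P_\sigma = (x_i : i \in [n]\setminus\sigma)$, consistent with the paper's earlier notation; a purely principal-ideal reading would instead force the intersection to range only over facets. The argument then rests on a single combinatorial remark: a monomial $x^\alpha$ lies in $P_\sigma$ if and only if $\Supp(\alpha) \not\subseteq \sigma$, since $P_\sigma$ is a monomial prime generated by the variables indexed by $[n]\setminus\sigma$. Intersecting over $\sigma \in \Delta$ and using subset-closure of $\Delta$ (so that $\Supp(\alpha) \in \Delta$ is witnessed by taking $\sigma = \Supp(\alpha)$), one obtains that $x^\alpha \in \bigcap_{\sigma \in \Delta} P_\sigma$ iff $\Supp(\alpha)\notin\Delta$. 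The same subset-closure property shows $x^\alpha \in I_\Delta$ iff $\Supp(\alpha)\notin\Delta$, because the generators of $I_\Delta$ are indexed by non-faces and any $\sigma \notin \Delta$ contained in $\Supp(\alpha)$ forces $\Supp(\alpha)\notin \Delta$ and conversely. Since both ideals are monomial with identical monomial content, they coincide.

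The main obstacle, to the extent that there is one, is interpretational rather than substantive: fixing the parsing of $(x^{[n]\setminus\sigma})$. Once that symbol is read in line with the paper's $P_\sigma$ notation as the monomial prime $(x_i : i \in [n]\setminus\sigma)$, the argument reduces to routine bookkeeping with square-free exponents, invoking only the trivial divisibility fact ``$x_i \mid x^\tau$ iff $i\in \tau$'' and the defining subset-closure of a simplicial complex. Both reductions are so short that the verification of the two composition identities and the two ideal-membership translations can be carried out essentially in parallel.
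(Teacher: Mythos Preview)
The paper does not supply a proof of this theorem at all: it is quoted as a background result from Miller--Sturmfels, so there is no ``paper's own proof'' to compare against. Your argument is correct and is the standard one for the Stanley--Reisner correspondence. You are also right to flag the notational issue: with the paper's convention $x^F=\prod_{i\in F}x_i$, the symbol $(x^{[n]\setminus\sigma})$ literally denotes a principal ideal, under which the displayed formula is false (e.g.\ for $\Delta$ the full simplex on $[n]$ the intersection would contain $(x_1\cdots x_n)\neq 0$); the intended meaning, matching both Miller--Sturmfels' $m^\sigma$ and the paper's own $P_\sigma$, is the monomial prime $(x_i : i\notin\sigma)$, and with that reading your proof goes through as written.
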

\begin{Prop}\label{OpSimp}
Under the correspondence in Theorem \ref{Bijection}, 
$I_{\Delta_1\cap\Delta_2} =I_{\Delta_1}+I_{\Delta_2}$ and 
$I_{\Delta_1\cup\Delta_2}=I_{\Delta_1}\cap I_{\Delta_2}$
for all simplicial complexes 
$\Delta_1$ and $\Delta_2$.
\end{Prop}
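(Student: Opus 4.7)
The plan is to prove both identities by direct set-theoretic bookkeeping, using the presentation $I_\Delta = (x^\sigma \mid \sigma \notin \Delta)$ together with the prime decomposition $I_\Delta = \bigcap_{\sigma \in \Delta} (x^{[n]\setminus \sigma})$ provided by Theorem \ref{Bijection}. Both equalities reduce to the basic logical facts $\sigma \notin \Delta_1 \cap \Delta_2 \iff (\sigma \notin \Delta_1) \lor (\sigma \notin \Delta_2)$ and $\sigma \notin \Delta_1 \cup \Delta_2 \iff (\sigma \notin \Delta_1) \land (\sigma \notin \Delta_2)$.

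For the first identity, I would argue as follows. A square-free monomial $x^\sigma$ lies in $I_{\Delta_1 \cap \Delta_2}$ precisely when $\sigma \notin \Delta_1 \cap \Delta_2$, that is, when $\sigma \notin \Delta_1$ or $\sigma \notin \Delta_2$; such a monomial therefore belongs to $I_{\Delta_1}$ or $I_{\Delta_2}$, proving $I_{\Delta_1 \cap \Delta_2} \subseteq I_{\Delta_1} + I_{\Delta_2}$. Conversely, every generator $x^\sigma$ of $I_{\Delta_1}$ has $\sigma \notin \Delta_1 \supseteq \Delta_1 \cap \Delta_2$, so lies in $I_{\Delta_1 \cap \Delta_2}$; likewise for $I_{\Delta_2}$. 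Hence $I_{\Delta_1} + I_{\Delta_2} \subseteq I_{\Delta_1 \cap \Delta_2}$.

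For the second identity, the cleanest route is through the prime decomposition from Theorem \ref{Bijection}. Since the intersection of ideals distributes over an arbitrary index set,
\[
I_{\Delta_1 \cup \Delta_2} = \bigcap_{\sigma \in \Delta_1 \cup \Delta_2} (x^{[n]\setminus \sigma}) = \Bigl(\bigcap_{\sigma \in \Delta_1}(x^{[n]\setminus\sigma})\Bigr) \cap \Bigl(\bigcap_{\sigma \in \Delta_2}(x^{[n]\setminus\sigma})\Bigr) = I_{\Delta_1} \cap I_{\Delta_2}.
\]
Alternatively, one could observe that $I_{\Delta_1} \cap I_{\Delta_2}$ is a monomial ideal, so it suffices to determine which square-free monomials it contains, which is exactly those $x^\sigma$ with $\sigma \notin \Delta_1$ and $\sigma \notin \Delta_2$, i.e., $\sigma \notin \Delta_1 \cup \Delta_2$.

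There is essentially no main obstacle: both statements are formal translations of the lattice operations on simplicial complexes through the bijection of Theorem \ref{Bijection}. The only point requiring minor care is the reverse containment in the union identity, where invoking the prime-decomposition form of $I_\Delta$ avoids any need to compute $I_{\Delta_1} \cap I_{\Delta_2}$ by hand.
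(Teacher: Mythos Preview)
Your proof is correct and essentially matches the paper's: for the first identity you argue exactly as the paper does, by tracking which square-free monomials $x^\sigma$ lie in each side. For the second identity the paper simply says ``analogous'' (i.e., the square-free monomial check you give as your alternative), while your primary route via the decomposition $I_\Delta=\bigcap_{\sigma\in\Delta}(x^{[n]\setminus\sigma})$ from Theorem~\ref{Bijection} is a perfectly valid minor variant.
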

\begin{proof}
For the first statement, we see that
\begin{align*}
x^\sigma \in I_{\Delta_1\cap\Delta_2}  &\Leftrightarrow \sigma\not\in \Delta_1\cap\Delta_2
\Leftrightarrow \sigma\not\in \Delta_1\hbox{ or }\sigma\not \in\Delta_2 \\
& \Leftrightarrow x^\sigma\in I_{\Delta _1}\hbox{ or }x^\sigma\in I_{\Delta_1}
\Leftrightarrow x^\sigma \in I_{\Delta_1}+I_{\Delta_2}.
\end{align*}
The proof of the second statement is analogous.
\end{proof}

\begin{Teo}\label{LyuCharSR}
Take a simplicial complex $\Delta$ on the vertex set $[n]$. Let $R$ be the Stanley-Reisner ring of $\Delta$, and let $m$ be its maximal homogeneous ideal.
Then
$$
\chi_\lambda(R_m)=\sum^{n}_{i=-1} (-2)^{i+1} |F_i (\Delta)|.
$$
\end{Teo}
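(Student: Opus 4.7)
The plan is to verify that both sides of the claimed equality obey the same Mayer--Vietoris-type recursion on simplicial complexes, and then reduce to the case of a simple complex via induction on the number of facets. Define $f(\Delta) := \sum_{i=-1}^{n} (-2)^{i+1} |F_i(\Delta)|$. Since $|F_i(\Delta_1 \cup \Delta_2)| + |F_i(\Delta_1 \cap \Delta_2)| = |F_i(\Delta_1)| + |F_i(\Delta_2)|$ (inclusion--exclusion on faces),
$$f(\Delta_1 \cup \Delta_2) = f(\Delta_1) + f(\Delta_2) - f(\Delta_1 \cap \Delta_2).$$
On the other side, Proposition \ref{OpSimp} translates $\cup$ and $\cap$ of simplicial complexes into $\cap$ and $+$ of Stanley--Reisner ideals, and combining this with Proposition \ref{M-VLyuChar} yields the identical recursion for $\Delta \mapsto \chi_\lambda\bigl((R/I_\Delta)_m\bigr)$.

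Next I would handle the base case $\Delta = \cP(\sigma)$ with $|\sigma| = k$. Here $I_\Delta = P_\sigma = (x_i : i \notin \sigma)$, and the completion of the localized Stanley--Reisner ring is the regular local ring $K[[x_i : i \in \sigma]]$, of dimension $k$. Cohen--Macaulayness kills $H^j_{P_\sigma S}(S)$ for $j \neq n-k$, and Theorem \ref{sqfreeExt} together with Remark \ref{sthullsimple} (which identifies $H^{n-k}_{P_\sigma}(\omega_S) \cong L[\sigma]$ as a simple straight module, hence a simple $D(S,K)$-module) gives $\lambda^k_0 = 1$ and $\lambda^i_0 = 0$ for $i \neq k$. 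Thus $\chi_\lambda = (-1)^k$. Since any face of $\cP(\sigma)$ is a subset of $\sigma$, $|F_i(\cP(\sigma))| = \binom{k}{i+1}$, and
$$f(\cP(\sigma)) = \sum_{j=0}^{k} (-2)^j \binom{k}{j} = (1-2)^k = (-1)^k,$$
matching the base case.

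Finally, I would induct on the number of facets $\ell$ of $\Delta$. If the facets are $\sigma_1,\ldots,\sigma_\ell$, set $\Delta' = \cP(\sigma_1) \cup \cdots \cup \cP(\sigma_{\ell-1})$ and $\Delta'' = \cP(\sigma_\ell)$. By Remark \ref{facets}, $\Delta = \Delta' \cup \Delta''$, and $\Delta' \cap \Delta'' = \bigcup_{i<\ell} \cP(\sigma_i \cap \sigma_\ell)$ has at most $\ell-1$ facets; in particular, $\Delta'$, $\Delta''$, and $\Delta' \cap \Delta''$ all have strictly fewer facets than $\Delta$. Applying the shared recursion together with the inductive hypothesis closes the argument.

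The main obstacle is the base case --- pinning down that the single nonvanishing local cohomology $H^{n-k}_{P_\sigma}(S)$ is actually simple as a $D(S,K)$-module --- but this is delivered cleanly by the straight-module machinery (Theorem \ref{sqfreeExt} and Remark \ref{sthullsimple}). Everything else is a formal double induction: inclusion--exclusion on faces for $f$, and Mayer--Vietoris for $\chi_\lambda$, matched term by term.
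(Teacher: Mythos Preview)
Your proof is correct and follows essentially the same strategy as the paper's: reduce via Mayer--Vietoris (Propositions \ref{M-VLyuChar} and \ref{OpSimp}) and the facet decomposition $\Delta=\bigcup_i \cP(\sigma_i)$ to the simple complex $\cP(\sigma)$, where both sides equal $(-1)^{|\sigma|}$ by the binomial theorem. Your single induction on the number of facets---using that $\Delta'\cap\Delta''=\bigcup_{i<\ell}\cP(\sigma_i\cap\sigma_\ell)$ has at most $\ell-1$ facets---is a mild streamlining of the paper's double induction on $\dim(\Delta)$ and the number of facets, in which the intersection is instead controlled by observing that it has strictly smaller dimension.
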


\begin{proof}
Let $S=K[x_1,\ldots, x_n]$, and let $\eta$ be its maximal homogeneous ideal.
We proceed by induction on $d:=\dim(\Delta).$ If $d=0$, then $\Delta=\{\varnothing\}.$ Then $I_\Delta=\eta,$ and $R=K$, so that
$\chi_\lambda(R_m)=1=(-2)^0=\sum \limits^{n}_{i=-1} (-2)^{i+1} |F_i (\Delta)|.$

Assume that the formula holds for all simplicial complexes of dimension less or equal to $d$.
Take a simplicial complex $\Delta$ of dimension $d+1$.
Consider all its facets, $\sigma_1,\ldots, \sigma_\ell.$ We now proceed by induction on $\ell$.
If $\ell=1$,  suppose that $\Delta_1=\cP (\sigma_1)$, where $\sigma_1=\{i_1,\ldots,i_j\}$ and $\dim(\sigma_1)=j.$ 
Then $I_{\Delta_1}=(x_i\mid i\not\in \sigma_1)S$, $R \cong K[x_1, \ldots, x_{n-j}]$, and 
\begin{align*}
\chi_\lambda(R_m)&=\Length_{D(\widehat{S_\eta},K)} H^j_{I_{\Delta_1}}(\widehat{S_\eta})=(-1)^{j}=(1-2)^{j}\\
&=\sum^{j}_{k=0} 1^{j-k}(-2)^k\binom{j}{k}=\sum^{j-1}_{k=-1} (-2)^{k+1}\binom{j}{k+1}=\sum^{j-1}_{k=-1} (-2)^{k+1}|F_k (\Delta)|.
\end{align*}

Assume that the formula is true for simplicial complexes of dimension $d+1$ with $\ell$ facets, and take a simplicial complex $\Delta$ of dimension $d+1$ with $\ell+1$ facets, $\sigma_1, \ldots, \sigma_\ell$.
Let $\Delta_i=\cP(\sigma_i)$ and $\Delta '=\Delta_1\cup\ldots \cup \Delta_\ell$. 
Then $\Delta=\Delta '\cup\Delta_{\ell+1}$.
We may assume, by renumbering, that $\dim(\Delta_\ell)=\dim(\Delta)$. Then $\dim(\Delta'\cap\Delta_\ell)<\dim(\Delta_\ell)$ by our choice of $\Delta_\ell$
and as we chose the decomposition given by the maximal facets. 
Therefore $\Lyuchar{R_m}$ equals
\begin{align*}
\Lyuchar{(S/I_{\Delta'\cup\Delta_\ell})_\eta} &=\Lyuchar{(S/I_{\Delta'}\cap I_{\Delta_\ell})_\eta} \hbox{ by Proposition \ref{OpSimp}} \\
&= \Lyuchar{(S/I_{\Delta'})_\eta)+ \chi_\lambda((S/I_{\Delta_\ell})_\eta}
-\Lyuchar{(S/(I_{\Delta'}+I_{\Delta_\ell}))_\eta} \hbox{ by Proposition \ref{M-VLyuChar}}\\
&=\Lyuchar{(S/I_{\Delta'})_\eta)+ \chi_\lambda((S/I_{\Delta})_\ell)_\eta}
-\Lyuchar{(S/(I_{\Delta'\cap\Delta_\ell}))_\eta} \hbox{ by Proposition \ref{OpSimp}}\\
&=\sum^{n}_{i=-1} (-2)^{i+1} | F_i (\Delta')| +\sum^{n}_{i=-1} (-2)^{i+1} | F_i (\Delta_\ell)|-\sum^{n}_{i=-1} (-2)^{i+1} | F_i (\Delta'\cap\Delta_\ell)|\\
&=\sum^{n}_{i=-1} (-2)^{i+1} (| F_i (\Delta')|+ | F_i (\Delta_\ell)|- | F_i (\Delta'\cap\Delta_\ell)| )\\
&=\sum^{n}_{i=-1} (-2)^{i+1} | F_i (\Delta'\cup \Delta_\ell) | =\sum^{n}_{i=-1} (-2)^{i+1} | F_i ( \Delta)|.
\end{align*}
\end{proof}

The above computation is related to work in \cite{MontanerAdv}. 

\begin{Ex} \label{SRex}
Let $K$ be a field, $S = K[x_1, x_2, x_3, x_4, x_5]$, and $m = (x_1, x_2, x_3, x_4, x_5)$.  Consider the ideal $I =(x_1 x_3, x_1 x_4, x_2 x_3, x_2 x_4, x_2 x_5) = (x_1, x_2, x_5) \cap (x_3, x_4, x_5)$ of $S$.  Note that $R := S/I$ is the Stanley-Reisner ring of the simplicial complex in Figure \ref{simplicialComplex}.
\begin{figure}
\begin{center}
\begin{pspicture}(3, 2.5)
\rput(-0.25,0.25){$x_2$}
\rput(0.75,1.25){$x_1$}
\rput(1.75,0.65){$x_5$}
\rput(3.25,1.25){$x_3$}
\rput(4.25,0.25){$x_4$}
\psline(1,1)(0,0)(2,0.5)
\pscustom[fillstyle=solid, fillcolor=black, opacity=0.4]{
\psline(2,0.5)(3,1)(4,0)(2,0.5)}
\psdot(0,0)
\psdot(2, 0.5)
\psdot(4,0)
\psdot(1,1)
\psdot(3,1)
\end{pspicture}
\end{center}
\caption{Simplicial complex in Example \ref{SRex}.}
\label{simplicialComplex}
\end{figure}
Using Theorem \ref{LyuCharSR}, we see that $\chi_\lambda(R_m) = 5 + (-2)\cdot5 + 4 \cdot 1 = -1.$  Moreover, if $K$ has characteristic zero, Corollary \ref{MonomialAlg} implies that $\lambda^3_0(\widehat{S}/I\widehat{S}) = 2$, $\lambda^4_0(\widehat{S}/I\widehat{S}) = 1$, and all other $\lambda^j_0(\widehat{S}/I\widehat{S}) = 0$, confirming the calculation of the Lyubeznik characteristic.  
\end{Ex}

\begin{Rem} \label{LyuCharChar}
In characteristic zero, \`Alvarez Montaner has given formulas for $|F_k (\Delta)|$ in terms of the characteristic cycle multiplicities of 
$H^1_{I_\Delta}(K[x_1,\ldots, x_n])$ (cf.\ \cite[Proposition $6.2$]{Montaner}) 
\end{Rem}

\begin{Rem}
Theorem \ref{LyuCharSR} shows that the Lyubeznik characteristic of Stanley-Reisner rings does not depend on their characteristic, although their Lyubeznik numbers do have such a dependence (cf.\ \cite[Example $4.6$]{LyuNumMontaner}).
\end{Rem}
  
\section*{Acknowledgments}
Many thanks go to Josep \`Alvarez Montaner, Xavier G\'omez-Mont,
Daniel Hern\'andez, Mel Hochster, Gennady Lyubeznik, and Felipe P\'erez for useful mathematical conversations related to this work.
We are also grateful to the American Mathematical Society and their Mathematical Research
Communities program for supporting meetings between the authors; we also thank the organizers of the aforementioned program.
The first author also thanks the National Council of Science and Technology of Mexico for its support through grant $210916.$

\bibliographystyle{alpha}
\bibliography{References}

\small{
{\sc Department of Mathematics, University of Michigan, Ann Arbor, MI $48109$-$1043,$ USA.}

\vspace{.2cm}

{\it Email address:}  \texttt{luisnub@umich.edu}

\vspace{.5cm}

{\sc Department of Mathematics, University of Minnesota,  Minneapolis, MN $55455,$ USA.} 

\vspace{.2cm}

{\it Email address:}  \texttt{ewitt@umn.edu}
}

\end{document}